\renewcommand*{\backref}[1]{}
\renewcommand*{\backrefalt}[4]{\tiny
  \ifcase #1 (\textbf{NOT CITED.})%
  \or    (Cited on page~#2.)%
  \else   (Cited on pages~#2.)%
  \fi}
\def\MRbibitem{\@ifnextchar[\my@lbibitem\my@bibitem}
\def\mybiblabel#1#2{\@biblabel{{\hyperref{http://www.ams.org/mathscinet-getitem?mr=#1}{}{}{#2}}}}
\def\myhyperanchor#1{\Hy@raisedlink{\hyper@anchorstart{cite.#1}\hyper@anchorend}}
\def\my@lbibitem[#1]#2#3#4\par{%
  \item[\mybiblabel{#2}{#1}\myhyperanchor{#3}\hfill]#4%
  \@ifundefined{ifbackrefparscan}{}{\BR@backref{#3}}%
  \if@filesw{\let\protect\noexpand\immediate
    \write\@auxout{\string\bibcite{#3}{#1}}}\fi\ignorespaces%
}
\def\my@bibitem#1#2#3\par{%
  \refstepcounter\@listctr
  \item[\mybiblabel{#1}{\the\value\@listctr}\myhyperanchor{#2}\hfill]#3%
  \@ifundefined{ifbackrefparscan}{}{\BR@backref{#2}}%
  \if@filesw\immediate\write\@auxout
    {\string\bibcite{#2}{\the\value\@listctr}}\fi\ignorespaces%
}
\DeclareFontFamily{U} {MnSymbolA}{}
\DeclareFontShape{U}{MnSymbolA}{m}{n}{
   <-6> MnSymbolA5
   <6-7> MnSymbolA6
   <7-8> MnSymbolA7
   <8-9> MnSymbolA8
   <9-10> MnSymbolA9
   <10-12> MnSymbolA10
   <12-> MnSymbolA12}{}
\DeclareFontShape{U}{MnSymbolA}{b}{n}{
   <-6> MnSymbolA-Bold5
   <6-7> MnSymbolA-Bold6
   <7-8> MnSymbolA-Bold7
   <8-9> MnSymbolA-Bold8
   <9-10> MnSymbolA-Bold9
   <10-12> MnSymbolA-Bold10
   <12-> MnSymbolA-Bold12}{}
\DeclareSymbolFont{MnSyA} {U} {MnSymbolA}{m}{n}
 \DeclareFontFamily{U} {MnSymbolC}{}
\DeclareFontShape{U}{MnSymbolC}{m}{n}{
  <-6> MnSymbolC5
  <6-7> MnSymbolC6
  <7-8> MnSymbolC7
  <8-9> MnSymbolC8
  <9-10> MnSymbolC9
  <10-12> MnSymbolC10
  <12-> MnSymbolC12}{}
\DeclareFontShape{U}{MnSymbolC}{b}{n}{
  <-6> MnSymbolC-Bold5
  <6-7> MnSymbolC-Bold6
  <7-8> MnSymbolC-Bold7
  <8-9> MnSymbolC-Bold8
  <9-10> MnSymbolC-Bold9
  <10-12> MnSymbolC-Bold10
  <12-> MnSymbolC-Bold12}{}
\DeclareSymbolFont{MnSyC} {U} {MnSymbolC}{m}{n}
\DeclareMathSymbol{\top}{\mathord}{MnSyA}{219} 
\DeclareMathSymbol{\plus}{\mathord}{MnSyC}{20} 
\declaretheorem[numberwithin=section]{theorem}
\declaretheorem[sibling=theorem]{lemma}
\declaretheorem[sibling=theorem]{corollary}
\declaretheorem[sibling=theorem]{proposition}
\declaretheorem[sibling=theorem,style=definition]{definition}
\declaretheorem[sibling=theorem,style=remark]{example}
\declaretheorem[sibling=theorem,style=remark]{remark}
\declaretheorem[name=Acknowledgements, style=remark, numbered=no]{ack}
\numberwithin{equation}{section}     
\setlist[enumerate,1]{label={\upshape(\alph*)},ref=\alph*}
\setlist[enumerate,2]{label={\upshape(\arabic*)},ref=\arabic*}
\newcommand{\R}{\mathbb{R}}
\newcommand{\Z}{\mathbb{Z}}
\newcommand{\N}{\mathbb{N}}
\newcommand{\F}{\mathcal{F}}
\newcommand{\M}{\mathcal{M}}\newcommand{\cO}{\mathcal{O}}
\newcommand{\cP}{\mathcal{P}}\newcommand{\cQ}{\mathcal{Q}}\newcommand{\cR}{\mathcal{R}}
\newcommand{\cU}{\mathcal{U}}
\newcommand{\Rnon}{\mathbb{R}_{\plus}}    
\newcommand{\Rpos}{\mathbb{R}_{\plus\plus}} 
\newcommand{\Mat}[2][]{\ifthenelse{\equal{#1}{}}{\R^{{#2}\times{#2}}}{\R^{{#1}\times{#2}}}}
\newcommand{\Man}[2][]{\ifthenelse{\equal{#1}{}}{\Rnon^{{#2}\times{#2}}}{\Rnon^{{#1}\times{#2}}}}
\newcommand{\Map}[2][]{\ifthenelse{\equal{#1}{}}{\Rpos^{{#2}\times{#2}}}{\Rpos^{{#1}\times{#2}}}}
\renewcommand{\epsilon}{\varepsilon}
\renewcommand{\phi}{\varphi}
\renewcommand{\setminus}{\smallsetminus}
\begin{document}

\title{The space of invariant measures for countable Markov shifts}
\date{\today}

\subjclass[2010]{Primary 05C80; Secondary 05C70, 05C63.}

\begin{thanks}
{G.I.\ was partially supported by CONICYT PIA ACT172001 and by Proyecto Fondecyt 1190194.}
\end{thanks}

\author[G.~Iommi]{Godofredo Iommi}
\address{Facultad de Matem\'aticas,
Pontificia Universidad Cat\'olica de Chile (PUC), Avenida Vicu\~na Mackenna 4860, Santiago, Chile}
\email{giommi@mat.puc.cl}
\urladdr{\url{http://http://www.mat.uc.cl/~giommi/}}

 \author[A.~Velozo]{Anibal Velozo}  \address{Department of Mathematics, Yale University, New Haven, CT 06511, USA.}
\email{anibal.velozo@yale.edu }
\urladdr{\url{https://gauss.math.yale.edu/~av578/}}

\maketitle

\begin{abstract}
It is well known that the space of invariant  probability measures for transitive sub-shifts of finite type is a Poulsen simplex. In this article we prove that in the non-compact setting, for a large family of transitive countable Markov shifts, the space of invariant sub-probability measures is a Poulsen simplex and that its extreme points are the ergodic invariant probability measures together with the zero measure. In particular we obtain that the space of invariant probability measures  is a Poulsen simplex minus a vertex and the corresponding convex combinations. Our results apply to finite entropy non-locally compact transitive countable Markov shifts and to every locally compact transitive countable Markov shift. In order to prove these results we introduce a topology on the space  of measures that generalizes the vague topology to a class of non-locally compact spaces, the topology of convergence on cylinders. We also prove analogous results for suspension flows defined over countable Markov shifts. 
\end{abstract}
\vspace{1cm}

\section{Introduction}

Ever since the work of Parthasarathy \cite{par} and  Oxtoby \cite{o} in the early 1960s a great deal of attention has been paid to the problem of describing the space of invariant probability measures of a dynamical system. Remarkable results have been obtained relating the geometry of the space with the dynamical properties of the system. A result by Downarowicz \cite{d} states that  for every Choquet simplex $K$ there exists a minimal sub-shift $(X, T)$ for which the space of invariant probability measures $\M(X,T)$ is affinely homemorphic to $K$. In this article we will be interested in a very special Choquet Simplex. 
\begin{definition}
A  metrizable convex compact Choquet simplex with at least two points $K$ is a \emph{Poulsen Simplex} if its extreme points are dense in $K$.
\end{definition}
The first example of such a simplex was constructed by Poulsen \cite{pou} in $1961$. It was later shown by  Lindenstrauss,  Olsen and  Sternfeld \cite[Theorem 2.3]{los} that the Poulsen simplex is unique up to affine homemorphism. This simplex enjoys remarkable properties. For example, as proved in  \cite[Section 3]{los}, the set of extreme points in the Poulsen simplex is path connected.
The relation of this simplex with dynamical systems directly follows from the seminal work of Sigmund \cite{si}, see also \cite{si1,si2}. Indeed,  if $(\Sigma,\sigma)$ is a transitive sub-shift of finite type with infinitely many elements  then $\M(\Sigma, \sigma)$ is affinely homeomorphic to the Poulsen simplex. Note that the extreme points in this setting correspond to the ergodic measures. 

This article describes the space of invariant measures for transitive one-sided countable state Markov shifts (see Section \ref{sec:cms} for precise definitions). The major difference with previous work on the subject is that the phase space is no longer compact and therefore the escape of mass phenomenon has to be taken into account.  Notions of convergence in the space of measures are required to describe loss of mass. Indeed, the weak* topology preserves the total mass of the space, thus it can not capture the escape of mass. For locally compact spaces the space of invariant measures can be endowed with the vague topology; in this context it is possible for mass to be lost. We introduce a new notion of convergence in the space of 
 measures, the so called \emph{topology of convergence on cylinders},  that generalizes the vague topology.  This notion of convergence does not require the underlying space to be locally compact.

In the non-compact setting the space of invariant probability measures is not necessarily compact. The lack of compactness of the space of invariant probability measures    is a major difficulty in the development of the corresponding ergodic theory:  in many arguments it is natural to take limits of invariant measures and it is  important to know that the limiting object is indeed a measure. 
We stress that this is a very subtle phenomenon, it could  happen that  for topologies that naturally generalize the weak* topology the limit of a sequence of invariant probability measures is not a countably additive measure. In this paper we will compactify the space of invariant probability measures for a large family of countable Markov shifts, including a wide range of non-locally compact shifts.  As we will see, our compactification is strongly related to the escape of mass phenomenon.

 For completeness we will briefly describe the topology on the space of invariant sub-probability measures we will focus on in this work. Let $(\Sigma,\sigma)$ be a transitive countable Markov shift and $\M_{\le1}(\Sigma,\sigma)$ the space of $\sigma$-invariant sub-probability measures on $\Sigma$ (for precise definitions we refer the reader to Sections \ref{sec:cms} and \ref{sec:top}). We say that $(\mu_n)_n\subset \M_{\le1}(\Sigma,\sigma)$ converges on cylinders to $\mu$ if $$\lim_{n\to\infty}\mu_n(C)=\mu(C),$$
for every cylinder $C\subset \Sigma$. This notion of convergence induces a topology, the topology of convergence on cylinders. We prove that this topology is  metrizable (see Proposition \ref{metriz}). For general facts about the  topology of convergence on cylinders we refer  the reader to Section \ref{sec:cyl}. 

We consider a large class of countable Markov shifts that  satisfy the so called $\mathcal{F}-$property (see Definition \ref{def:F}).
This include locally compact and finite entropy non-locally compact countable Markov shifts.   The  $\mathcal{F}-$property essentially rules out the possibility of having infinitely many periodic orbits of a given length that intersect a fixed cylinder. One of the main results of this work is

\begin{theorem} \label{thm:po}
Let $(\Sigma, \sigma)$ be a transitive countable Markov shift satisfying the $\F-$property. Then the space of invariant sub-probability measures $\M_{\le 1}(\Sigma,\sigma)$ endowed with the topology of convergence on cylinders is affine homeomorphic to the Poulsen simplex. In particular $\M_{\le1}(\Sigma,\sigma)$ is compact with respect to the topology of convergence on cylinders. 
\end{theorem}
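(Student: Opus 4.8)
The plan is to identify $\M_{\le 1}(\Sigma,\sigma)$ as a Poulsen simplex by verifying the hypotheses of the Lindenstrauss--Olsen--Sternfeld uniqueness theorem \cite[Theorem 2.3]{los}: it suffices to exhibit $\M_{\le 1}(\Sigma,\sigma)$, equipped with the topology of convergence on cylinders, as (i) a metrizable, compact, convex Choquet simplex with at least two points, having (ii) a dense set of extreme points. Metrizability is Proposition \ref{metriz}, and convexity is immediate since a convex combination of $\sigma$-invariant measures of total mass at most $1$ is again of this form, the affine structure being compatible with the cylinder topology because each evaluation $\mu\mapsto\mu(C)$ is affine. The space has at least two points, e.g.\ the zero measure and any periodic orbit measure. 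The substance of the proof therefore lies in compactness, the simplex property, and density of the extreme points.

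For compactness I would argue sequentially, which is legitimate by metrizability. Given $(\mu_n)_n\subset\M_{\le 1}(\Sigma,\sigma)$, since there are only countably many cylinders and $\mu_n(C)\in[0,1]$, a diagonal extraction yields a subsequence along which $\mu_n(C)$ converges for every cylinder $C$ to a number $\mu(C)$. The resulting set function is finitely additive on the algebra of cylinders, and Fatou's lemma applied to the partition of $\Sigma$ into $1$-cylinders shows $\mu(\Sigma)\le 1$. The crux is to promote $\mu$ to a genuine, countably additive Borel measure: one must rule out the escape of mass \emph{through the refinement} of a fixed cylinder, that is, show that for each cylinder $C=[w]$ one has $\mu([w])=\sum_{b}\mu([wb])$, the sum being over all admissible one-symbol extensions. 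Equivalently, one needs the tightness estimate $\lim_{N\to\infty}\sup_n\sum_{b>N}\mu_n([wb])=0$, which permits interchanging the limit in $n$ with the countable sum over $b$. This is exactly the point at which the $\F$-property (Definition \ref{def:F}) is indispensable: the control it imposes on the combinatorics of orbits meeting a fixed cylinder furnishes the required uniform tails and hence countable additivity. Once $\mu$ is known to be a measure, passing the invariance identities $\mu_n(C)=\mu_n(\sigma^{-1}C)$ to the limit (note $\sigma^{-1}C$ is a countable disjoint union of cylinders, so the same tightness applies) shows $\mu\in\M_{\le 1}(\Sigma,\sigma)$. I expect establishing this countable additivity to be the main obstacle, since it is precisely the phenomenon flagged in the introduction whereby a natural generalization of the weak$^\ast$ topology may produce a non-countably-additive limit.

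The Choquet simplex property I would deduce from uniqueness of the ergodic decomposition. For $\mu\neq 0$ write $\mu=\mu(\Sigma)\,\bar\mu$ with $\bar\mu$ an invariant probability measure; the classical ergodic decomposition represents $\bar\mu$ uniquely by a probability measure on the ergodic invariant probability measures, and adjoining the weight $1-\mu(\Sigma)$ at the zero measure produces a representing measure for $\mu$ supported on $\{0\}\cup\{\text{ergodic probability measures}\}$. Uniqueness of this representation identifies these as the extreme points of $\M_{\le 1}(\Sigma,\sigma)$ and shows that every point has a unique maximal representing measure, which is the defining property of a Choquet simplex.

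Finally, for the Poulsen property I must show that $\{0\}\cup\{\text{ergodic measures}\}$ is dense in the cylinder topology. Following Sigmund \cite{si}, transitivity of $(\Sigma,\sigma)$ lets one approximate, on any finite collection of cylinders, a given target by measures supported on periodic orbits, which are ergodic and hence extreme. To also realize sub-probability targets of mass $t<1$ as limits of extreme points, I would build periodic orbits that spend a proportion $t$ of their period inside the cylinders where the target lives while devoting the remaining proportion $1-t$ to longer and longer excursions out to infinity; the associated periodic measures then converge on cylinders to the target while mass $1-t$ escapes. Combining the three properties with \cite[Theorem 2.3]{los} yields the affine homeomorphism with the Poulsen simplex, and compactness is then automatic.
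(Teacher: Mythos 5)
Your architecture is the same as the paper's: exhibit $\M_{\le 1}(\Sigma,\sigma)$ as a metrizable compact convex Choquet simplex with dense extreme points and invoke \cite[Theorem 2.3]{los}. The metrizability, convexity, and Choquet-simplex steps are fine. However, the two places where the $\F$-property actually has to do work are asserted rather than proved, and these are precisely the non-routine parts of the theorem. First, for compactness you correctly identify that the crux is promoting the finitely additive limit $F$ to a countably additive measure, i.e.\ proving $F(C)=\sum_b F(Cb)$, but you only say that the $\F$-property ``furnishes the required uniform tails.'' It does not do so directly for arbitrary invariant measures: the paper first reduces to sequences of \emph{periodic} measures (using weak* density of periodic measures, Theorem \ref{lem:cs} and Remark \ref{rem:dense}) and then runs a genuinely combinatorial argument (Lemma \ref{lem:mea}) counting occurrences of the word ${\bf a_1\ldots a_m s}$ inside the periodic word: the $\F$-property forces the ``first-return'' function $p$ to be proper, so each occurrence with $s$ large drags along a long block with no return to $a_1$, and too many such occurrences would exceed the total length of the periodic word. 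Without the reduction to periodic orbits this counting is unavailable, and your proposed uniform tail bound $\lim_{N\to\infty}\sup_n\sum_{b>N}\mu_n([wb])=0$ is both stronger than what is proved (the paper only obtains the iterated limit $\lim_k\lim_n$) and unsupported as stated.

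Second, your density argument for sub-probability targets of mass $t<1$ presupposes that one can send a proportion $1-t$ of a periodic orbit on ``longer and longer excursions out to infinity.'' The existence of arbitrarily long excursion words (Property $(k)$ in the proof of Lemma \ref{tozero}) is not automatic and itself requires the $\F$-property: if all excursion words based in $\{1,\dots,k\}$ had bounded length, one could manufacture infinitely many periodic points of bounded period through a fixed symbol, contradicting Definition \ref{def:F}. Example \ref{ex:nozero} shows this can genuinely fail without the $\F$-property, so this step cannot be waved through. The paper's route is also slightly different and cleaner here: it first produces a sequence $(\mu_n)_n$ of invariant probability measures converging on cylinders to zero, forms $\lambda\mu+(1-\lambda)\mu_n\to\lambda\mu$, and then approximates each of these \emph{probability} measures by periodic measures using the equivalence of the weak* and cylinder topologies on $\M(\Sigma,\sigma)$ (Lemma \ref{equivtop}), rather than building the interpolating periodic orbits by hand. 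In summary, your outline is correct and correctly locates the difficulties, but the proposal does not contain the two arguments that constitute the actual content of the theorem.
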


The topology of convergence on cylinders restricted to $\M(\Sigma,\sigma)$ coincides with the weak* topology (see Lemma \ref{equivtop}). Theorem \ref{thm:po} has the following corollary.

\begin{corollary}
Let $(\Sigma, \sigma)$ be a transitive countable Markov shift satisfying the $\F-$property. Then the space of invariant probability measures $\M(\Sigma, \sigma)$ endowed with the weak* topology is affine homeomorphic to the Poulsen simplex minus a vertex and all of its convex combinations.
\end{corollary}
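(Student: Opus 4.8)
The plan is to deduce the corollary directly from Theorem~\ref{thm:po} and Lemma~\ref{equivtop}, by locating $\M(\Sigma,\sigma)$ as a subset of $\M_{\le1}(\Sigma,\sigma)$ and tracking it through the affine homeomorphism with the Poulsen simplex. First I would fix the affine homeomorphism $\Phi\colon \M_{\le1}(\Sigma,\sigma)\to K$ onto the Poulsen simplex $K$ provided by Theorem~\ref{thm:po}, where the domain carries the topology of convergence on cylinders. By the identification of extreme points established there, the set of extreme points of $\M_{\le1}(\Sigma,\sigma)$ is $\cE\cup\{0\}$, with $\cE$ the ergodic invariant probability measures and $0$ the zero measure; hence $v:=\Phi(0)$ is a distinguished vertex of $K$.

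Next I would record two structural facts. First, $\M(\Sigma,\sigma)$ is a closed face of $\M_{\le1}(\Sigma,\sigma)$: if a probability measure is written as $t\mu_1+(1-t)\mu_2$ with $\mu_1,\mu_2\in\M_{\le1}(\Sigma,\sigma)$ and $t\in(0,1)$, then comparing total masses gives $1=t\,\mu_1(\Sigma)+(1-t)\,\mu_2(\Sigma)$ with $\mu_i(\Sigma)\le1$, forcing $\mu_1(\Sigma)=\mu_2(\Sigma)=1$. Second, by Lemma~\ref{equivtop} the topology of convergence on cylinders restricted to $\M(\Sigma,\sigma)$ is the weak* topology, so $\Phi$ restricts to an affine homeomorphism from $(\M(\Sigma,\sigma),\text{weak*})$ onto $\Phi(\M(\Sigma,\sigma))\subset K$. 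This already yields the affine homeomorphism asserted in the corollary; it remains to describe the target set geometrically.

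To identify $\Phi(\M(\Sigma,\sigma))$ with \emph{$K$ minus the vertex $v$ and all of its convex combinations}, I would invoke uniqueness of barycentric (Choquet) representations in the simplex. Each $\mu\in\M_{\le1}(\Sigma,\sigma)$ is the barycenter of a unique probability measure $\tau_\mu$ on $\cE\cup\{0\}$, and since $\nu(\Sigma)=1$ for $\nu\in\cE$ while $0(\Sigma)=0$, one computes $\mu(\Sigma)=\tau_\mu(\cE)=1-\tau_\mu(\{0\})$. Thus $\mu$ is a probability measure precisely when $\tau_\mu(\{0\})=0$, i.e.\ when its representing measure does not charge the vertex. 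Transporting this through $\Phi$, and using that in a simplex the representing measure of $\lambda v+(1-\lambda)q$ is $\lambda\delta_v+(1-\lambda)\tau_q$, the condition $\tau_p(\{v\})=0$ is exactly the condition that $p$ is \emph{not} a proper convex combination involving $v$. Hence $\Phi(\M(\Sigma,\sigma))=\{p\in K:\tau_p(\{v\})=0\}$ is $K$ with $v$ and every convex combination involving $v$ deleted, which is the complementary closed face, as claimed.

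The hard part will be purely a matter of making the informal phrase ``minus a vertex and all of its convex combinations'' into a precise set, since everything else is bookkeeping through $\Phi$. I expect the barycenter characterization above to be the cleanest route, as it identifies the surviving set via the single condition $\tau_p(\{v\})=0$ and crucially uses that $K$ is a simplex (so that representing measures are unique and behave additively under convex combinations), rather than arguing segment-by-segment about which combinations $[v,p]$ are removed.
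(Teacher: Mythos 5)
Your argument is correct and follows essentially the same route as the paper, which deduces this corollary directly from Theorem~\ref{thm:po_main} and Lemma~\ref{equivtop} using that the extreme points of $\M_{\le 1}(\Sigma,\sigma)$ are the ergodic probability measures together with the zero measure; your barycentric computation is just an explicit way of making the phrase ``minus a vertex and all of its convex combinations'' precise. One small slip worth noting: $\M(\Sigma,\sigma)$ is a face of $\M_{\le 1}(\Sigma,\sigma)$ but not a closed one --- by Lemma~\ref{tozero} and the proof of Theorem~\ref{sub:comp} it is in fact dense in the topology of convergence on cylinders --- though nothing in your argument actually uses closedness, so this does not affect the proof.
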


In order to prove Theorem \ref{thm:po} we will need to prove three key properties: (1) there exists a sequence of ergodic measures converging on cylinders to the zero measure,  (2) every sequence of periodic measures has an accumulation point (in the topology of convergence on cylinders) which is a countably additive measure and (3) the set of periodic measures is weak* dense in $\M(\Sigma, \sigma)$. 
While point (3) is fairly standard and uses shadowing and closing properties of the shift, the other two are more subtle. Indeed, a combinatorial assumption is required on $(\Sigma,\sigma)$ for these properties to hold (hence the $\mathcal{F}$-property assumption). 

It worth pointing out that  Theorem \ref{thm:po} is optimal for the topology of convergence on cylinders. More precisely, if $(\Sigma,\sigma)$ does not satisfy the $\F-$property, then $\M(\Sigma,\sigma)$ contains a sequence of periodic measures which converges on cylinders to a finitely additive measure which is not countably additive (see Proposition \ref{prop:noF}).  In particular if we want to compactify $\M(\Sigma,\sigma)$ we must give up the convergence on all cylinders (which does not seem reasonable) or modify the topology in a more substantial way.

We also study suspension flows defined over countable Markov shifts. These are continuous time dynamical systems defined over non-compact spaces. The suspensions we consider are constructed over arbitrary countable Markov shifts $(\Sigma, \sigma)$ and for  roof functions $\tau:\Sigma\to\R$ belonging to a class that we denote by $\mathcal{R}$ (see Definition \ref{def:R}).
If $\tau$ is bounded away from zero there is a one--to--one correspondence between the space of invariant probability measures for the flow, which we denote by $\M(\Sigma,\sigma,\tau)$, and $\M_\tau=\{\mu\in \M(\Sigma,\sigma):\int \tau d\mu<\infty\}$. The space of  sub-probability measures invariant by the suspension flow is denoted by $\M_{\le1}(\Sigma,\sigma,\tau)$.  In Section \ref{sec:sus} we define a topology on $\M_{\le1}(\Sigma,\sigma,\tau)$, the \emph{topology of convergence on cylinders for the suspension flow}, that shares many properties with the topology of convergence on cylinders on $\M_{\le1}(\Sigma,\sigma)$ (see Definition \ref{def:ccs}). 

The class of suspension flows that we study includes a wide range of symbolic models for geometric systems. For example, the symbolic model of the geodesic flow over the modular surface satisfies all of our assumptions.  In this context we prove,

\begin{theorem}\label{thm:po2} Let $(\Sigma,\sigma)$ be a transitive countable Markov shift and $\tau\in \cR$. Then the space of invariant sub-probability measures of the suspension flow $\M_{\le 1}(\Sigma,\sigma,\tau)$, endowed with the topology of convergence on cylinders for the suspension flow, is affine homeomorphic to the Poulsen simplex. In particular $\M_{\le 1}(\Sigma,\sigma,\tau)$ is compact with respect to the topology of convergence on cylinders.
\end{theorem}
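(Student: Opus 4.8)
The plan is to follow the blueprint used for Theorem~\ref{thm:po}, transporting the simplex structure through the classical correspondence between flow-invariant measures and base measures, while using the hypothesis $\tau\in\cR$ to supply the compactness that, for the base shift, was guaranteed by the $\F$-property. Recall that when $\tau$ is bounded away from zero the Abramov construction $\mu\mapsto(\mu\times\mathrm{Leb})/\int\tau\,d\mu$ is an affine bijection between $\M_\tau=\{\mu\in\M(\Sigma,\sigma):\int\tau\,d\mu<\infty\}$ and $\M(\Sigma,\sigma,\tau)$, and under it ergodic measures correspond to ergodic measures. I would first extend this bookkeeping to sub-probability measures: the zero measure on the suspension should correspond to total escape of mass, and I would verify that the extreme points of the convex set $\M_{\le1}(\Sigma,\sigma,\tau)$ are precisely the ergodic invariant probability measures of the flow together with the zero measure. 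Combined with the ergodic decomposition this identifies $\M_{\le1}(\Sigma,\sigma,\tau)$ as a Choquet simplex.

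Next I would record metrizability. Arguing as in Proposition~\ref{metriz}, the topology of convergence on cylinders for the suspension flow of Definition~\ref{def:ccs}, being generated by the countably many evaluations $\nu\mapsto\nu(C\times(a,b))$ on boxes with $C$ a cylinder of $\Sigma$ and $(a,b)$ a rational interval, is induced by a metric; the argument is the exact analogue of the one used on the base.

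The heart of the argument, and the step I expect to be hardest, is compactness: every sequence in $\M_{\le1}(\Sigma,\sigma,\tau)$ must have a subsequence converging on boxes to a countably additive sub-probability measure. Using density of periodic flow measures (the analogue of property~(3), which transfers from $(\Sigma,\sigma)$ through shadowing and closing, since periodic orbits of the flow sit over periodic orbits of the shift) together with a diagonal argument over the countable family of boxes, it suffices to treat sequences $(\nu_n)$ supported on single periodic orbits. Writing each such $\nu_n$ through the Abramov correspondence as arising from a periodic base measure $\mu_n$ with flow-period $\ell_n=\int\tau\,d\mu_n$, one has, for a box with $b\le\inf\tau$, the clean identity $\nu_n(C\times(a,b))=(b-a)\,\mu_n(C)/\ell_n$. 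If $\ell_n\to\infty$ along a subsequence, then every box has vanishing measure and $\nu_n\to0$; otherwise $\ell_n$ remains bounded, and here the defining properties of $\cR$ (Definition~\ref{def:R}) are meant to guarantee that a uniformly bounded flow-period prevents the base masses $\mu_n$ from dispersing entirely to infinity, so that a countably additive accumulation point survives. This is exactly why the $\F$-property is not needed: even if infinitely many combinatorial orbits of a fixed length pass through a fixed cylinder, their flow-periods $\sum\tau$ are large, so the associated flow measures are driven toward zero rather than accumulating on a merely finitely additive limit. The principal technical difficulty is to make this dichotomy uniform over all boxes at once, controlling the interaction between the normalizations $\ell_n$ and the possible partial escape of $\mu_n$ so that the limiting box-values assemble into a genuine countably additive invariant measure of the flow.

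Finally, to see that the extreme points are dense I would combine the density of periodic flow measures with the existence of a sequence of ergodic flow measures converging on cylinders to zero (the analogue of property~(1)); the latter is produced by taking periodic orbits that escape to infinity, for which $\ell_n=\int\tau\,d\mu_n\to\infty$ forces the normalized flow measures to lose all mass. Since $\M_{\le1}(\Sigma,\sigma,\tau)$ is then a metrizable compact Choquet simplex with at least two points whose extreme points are dense, the uniqueness theorem of Lindenstrauss, Olsen and Sternfeld \cite{los} yields the asserted affine homeomorphism with the Poulsen simplex, and compactness is part of the conclusion.
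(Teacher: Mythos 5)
Your proposal follows the same overall architecture as the paper (Choquet simplex structure, metrizability, compactness via reduction to periodic orbits, density of extreme points via a sequence escaping to the zero measure, then the Lindenstrauss--Olsen--Sternfeld uniqueness theorem), and you correctly identify the central dichotomy $\int\tau\,d\mu_n\to\infty$ versus $\int\tau\,d\mu_n\le M$. But two steps that you defer are precisely where the paper has to work, and as written they are genuine gaps. First, the compactness step in the bounded case: you say the properties of $\cR$ ``are meant to guarantee'' that a countably additive limit survives and you explicitly flag the uniformity over boxes as the principal difficulty, but you do not supply the argument. The paper's resolution (Proposition \ref{prop:F}) is the estimate, valid for periodic $\mu_n$ with $\int\tau\,d\mu_n\le M$,
\begin{equation*}
\mu_n\Bigl(\bigcup_{s\ge k}Cs\Bigr)\;\le\;\mu_n\Bigl(\bigcup_{s\ge k}[s]\Bigr)\;\le\;\frac{M}{\inf_{x\colon x_1\ge k}\tau(x)}\;\xrightarrow[k\to\infty]{}\;0
\end{equation*}
uniformly in $n$, which yields the Kolmogorov consistency condition $F(C)=\sum_k F(Ck)$ for the limiting set function and hence a countably additive invariant measure; one then checks via Lemma \ref{lem:noescape} that no mass is lost. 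Your intuition about long flow-periods killing the finitely additive pathology is correct, but the proof is this one inequality and you should write it.

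Second, and more seriously, your reduction ``it suffices to treat sequences supported on single periodic orbits'' via ``density of periodic flow measures, which transfers from $(\Sigma,\sigma)$ through shadowing and closing'' is not automatic. The Coud\`ene--Schapira theorem gives weak* density of periodic measures in $\M(\Sigma,\sigma)$, but under the Abramov correspondence a flow measure converges on cylinders to $\nu$ if and only if the base measures converge weak* to $\psi(\nu)$ \emph{and} $\int\tau\,d\mu_n\to\int\tau\,d\psi(\nu)$ (Lemma \ref{lem:util}). A periodic approximation of $\psi(\nu)$ with no control on $\int\tau\,d\mu_n$ can fail to approximate $\nu$ at all after normalization, and it destroys the uniform bound $\int\tau\,d\mu_n\le M$ that your compactness argument needs downstream. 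The paper proves a genuine refinement (Proposition \ref{densusp}, resting on Lemma \ref{noref}): every $\mu\in\M_\tau$ is a weak* limit of periodic measures $\mu_n$ with $\int\tau\,d\mu_n\to\int\tau\,d\mu$; this requires a new orbit-gluing construction in which the Birkhoff sums of $\tau$ along the connecting words are controlled, not merely the classical closing argument. Finally, your production of a sequence of ergodic flow measures converging to zero (``periodic orbits that escape to infinity'') also needs a case analysis: when $(\Sigma,\sigma)$ lacks the $\F$-property there may be \emph{no} base sequence losing mass (Example \ref{ex:nozero}), and one instead uses infinitely many periodic orbits of a \emph{fixed} combinatorial length through a fixed cylinder, whose flow-periods diverge by condition \eqref{Tb} of Definition \ref{def:R}; when the $\F$-property holds one uses Lemma \ref{tozero} and Lemma \ref{lem:noescape} instead.
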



Some countable Markov shifts without the $\F-$property are particularly important (for instance the full shift, or shifts with the BIP property \cite{sa2}), and we do want to have some understanding on their spaces of invariant probability measures. The work done in Section \ref{sec:sus} and our auxiliary potential $\tau$ allows us to regain control in this setting. Indeed, in this general context we are able to describe the set of invariant probability measures for which the function $\tau$ is integrable (see Lemma \ref{lem:medi} and Theorem  \ref{susp:p2}).
 
We stress that the compactifications constructed in this paper have several interesting applications to the thermodynamic formalism of countable Markov shifts. For instance, in joint work with M. Todd \cite{itv}, we consider finite entropy countable Markov shifts $(\Sigma, \sigma)$ and study the behaviour of the measure theoretic entropy of a sequence $(\mu_n)_n\subset\M(\Sigma,\sigma)$. By the results in this paper there exists a measure $\mu\in \M_{\le1}(\Sigma,\sigma)$ and a sub-sequence of $(\mu_n)_n$ which converges to $\mu$ in the cylinder topology. This property is used  to establish stability results for the measure of maximal entropy, relate the escape of mass with the entropy of the system and to prove upper-semi continuity of the entropy map. In the compact setting there is no escape of mass and the properties of the entropy map and the measure of maximal entropy are classical \cite[Chapter 8]{wa}, but in the non-compact case new ideas were needed. We also mention that the results in \cite{itv} can be pushed even further to include potentials, this is discussed in \cite{v} by the second author.

Finally, we remark that over the last few years countable Markov shifts have been used to code relevant parts of the dynamics for a wide range of dynamical systems. For example, it was shown by Sarig \cite{sa} that countable Markov partitions can  be constructed for positive entropy diffeomorphisims defined on compact surfaces. The corresponding symbolic coding captures positive entropy measures.  These results  have recently been used to prove that $C^{\infty}$ surface diffeomoprhisms of positive entropy have at most finitely many measures of maximal entropy \cite{bcs}. In a different direction, countable Markov partitions have been constructed  for Sinai and Bunimovich billiards, this has allowed for the proof of lower bounds on the number of periodic orbits of a given period \cite{lm}. Based on the work of Sarig, countable Markov partitions have been constructed for  large classes of dynamical systems.  Our results apply not only to all  countable Markov shifts obtained as symbolic codings of these systems, but also to several symbolic models that are not locally-compact. For instance, symbolic codings of interval maps having a parabolic fixed point \cite{h, mp, sa1} or loop systems \cite{bbg}.

\begin{ack}
We would like to thank Mike Todd for a wealth of relevant and interesting comments on the subject of this article. We would also like to thank the referee for many useful comments and suggestions. This paper was initiated while the second author was visiting the first author at Pontificia Universidad Cat\'olica de Chile. The second author would like to thank the dynamics group at PUC for making his visit very stimulating. He would also like to thank Richard Canary for his invitation to participate of `Workshop on Groups, Geometry and Dynamics' held in  Universidad de la Rep\'ublica, where an important part of this work was prepared. 
\end{ack}


\section{Countable Markov shifts}\label{sec:cms}
In this section we define the dynamical systems that will be studied throughout the article. Let $B$ be a transition matrix defined on the alphabet of natural numbers. That is, the entries of the matrix $B=B(i,j)_{\N   \times \N}$  are zeros and ones (with no row and no column made entirely of zeros). The one-sided countable state Markov shift $(\Sigma, \sigma)$ defined by the matrix $B$ is the set
\begin{equation*}
\Sigma := \left\{ (x_n)_{n \in \N} : B(x_n, x_{n+1})=1  \text{ for every } n \in \N   \right\}, 
\end{equation*}
together with the shift map $\sigma: \Sigma  \to \Sigma $ defined by $\sigma(x_1, x_2, \dots)=(x_2, x_3,\dots)$.
In what follows we will simply call $(\Sigma, \sigma)$,  \emph{countable Markov shift}.
For $(a_{1}, \dots ,a_{n}) \in \N^n$, we define a cylinder set $[a_1 \dots a_{n}]$ of length $n$ by 
\begin{equation*}
[a_1 \dots a_{n}]:= \left\{x \in \Sigma: x_j=a_j \text{ for } 1 \le j \le n \right\}.
\end{equation*}
We endow $\Sigma$ with the topology generated by cylinder sets. This is a metrizable  non-compact  space. Indeed, let $\tilde{d}:\Sigma \times \Sigma \to \R$ be the function defined by  
\begin{equation} \label{metric}
\tilde{d}(x,y):=
\begin{cases}
1 & \text{ if } x_0\ne y_0; \\
2^{-k} & \text{ if  } x_i=y_i \text{ for  } i \in \{0, \dots , k-1\} \text{ and } x_k \neq y_k; \\
0 & \text{ if } x=y.
\end{cases}
\end{equation} 
The function $\tilde{d}$ is a metric and it generates the same topology as that of the cylinders sets. 

A countable Markov shift  defined by the transition  matrix    
$B=B(i,j)_{\N  \times \N }$  
is locally compact if and only if for every $i \in \N$ we have $\sum_{j=1}^{\infty} B(i,j) < \infty$ (see \cite[Observation 7.2.3]{ki}). 

An \emph{admissible word} is a word $w={\bf a_1...a_n}$, where ${\bf a_i}\in \N$ and $[a_1,...,a_n]$ is non-empty. To emphasize the difference between admissible words and points in $\Sigma$ we use bold letters for admissible words. 

\begin{definition}
A countable Markov shift $(\Sigma, \sigma)$ is \emph{transitive} if for every  open sets $U, V \subset \Sigma$ there exists $n \in \N$ such that $U \cap \sigma^{-n} V \neq \emptyset$. Similarly, a countable Markov shift $(\Sigma, \sigma)$ is \emph{topologically mixing} if for every open sets $U, V \subset \Sigma$ there exists $N(U,V) \in \N$ such that for every $n >N(U,V)$ we have $U \cap \sigma^{-n} V \neq \emptyset$. \end{definition}

Let $\phi:\Sigma\to\R$ be a function. We define $var_n(\phi)=\sup_{x,y} |\phi(x)-\phi(y)|$, where the supremum runs over points $x$ and $y$ satisfying $\tilde{d}(x,y)\le 2^{-n}$. Observe that a function $\phi$ is uniformly continuous if and only if $var_n(\phi)$ goes to zero as $n$ goes to infinity. A potential $\phi$ has summable variations if $\sum_{k=2}^\infty var_k(\phi)$ is finite. 

In the late 1960s Gurevich \cite{gu1,gu2} introduced a suitable notion of entropy in this setting. Note that since the space $\Sigma$ is not compact the classical definition of topological entropy obtained by means of $(n,\epsilon)$-separated sets (see \cite[Chapter 7]{wa}) depends upon the metric. That is, two equivalent metrics can yield different numbers.  Since the entropy of an invariant measure depends only on the Borel structure and not on the metric, this is a major problem if the entropy is to satisfy a variational principle. Gurevich introduced the following notion of entropy:
\begin{equation*}
h(\sigma):= \limsup_{n \to \infty} \frac{1}{n} \log \sum_{x:\sigma^n x =x} 1_{[a]}(x),
\end{equation*}
where $a \in \N$ is an arbitrary symbol and $1_{[a]}$ is the characteristic function of the cylinder $[a]$.
Gurevich  proved that this value is independent of the symbol $a$ if $(\Sigma, \sigma)$ is transitive and that the limit exists if $(\Sigma,\sigma)$ is topologically mixing. Moreover, he also proved that this notion of entropy is the correct one in the sense that it satisfies the variational principle. That is
\begin{equation*}
h(\sigma)= \sup \left\{h(\mu) : \mu \in \mathcal{M}( \Sigma, \sigma) \right\},
\end{equation*}
where $h(\mu)$ is the entropy of the invariant measure $\mu$ (see \cite[Chapter 4]{wa}) and  $\mathcal{M}( \Sigma, \sigma)$ is the space of invariant probability measures.

\section{Topologies in the space of measures}\label{sec:top}

In this section we recall definitions and properties of the weak* and the vague topologies and define a new notion of convergence in the space of probability measures on $\Sigma$, namely the topology of convergence on cylinders. It is with respect to these three topologies that we will describe the space of invariant probability measures $ \mathcal{M}( \Sigma, \sigma)$. It  is worth emphasizing that the weak* topology does not allow escape of mass, but the vague topology and the topology of convergence on cylinders do allow it. 


\subsection{The weak* topology}

Let $(X,\rho)$ be a metric space. We denote by $C_b(X)$ the space of bounded continuous function $f:X\to \R$. We endow $C_b(X)$ with the $C^0$-topology. This is the topology induced by the norm $\| f \|=\sup_{x\in X}|f(x)|$. It is a standard fact that $C_b(X)$ is a Banach space. Denote by $\M(X)$ the set of Borel probability measures on the metric space $(X,\rho)$. Our first notion of convergence in this set is the following,

\begin{definition} \label{def:wc}
A sequence of probability measures $(\mu_n)_n\subset \M(X)$ converges to a measure $\mu$ in the weak* topology if for every $f \in C_b(X)$ we have 
\begin{equation*}
\lim_{n \to \infty} \int f d \mu_n = \int f d \mu.
\end{equation*}
\end{definition}

\begin{remark}
Note that in this notion of convergence we can replace the set of test functions (bounded and continuous) by the space of bounded uniformly continuous functions (see \cite[8.3.1 Remark]{bg}) or by the space of bounded Lipschitz functions (see \cite[Theorem 13.16 (ii)]{kl}). That is, if for every bounded uniformly continuous (or bounded Lipschitz) function $f:X\to \R$ we have 
\begin{equation*}
\lim_{n \to \infty} \int f d \mu_n = \int f d \mu,
\end{equation*}
then the sequence $(\mu_n)_n$ converges in the weak* topology to $\mu$.
\end{remark}

The  weak* topology is the coarsest topology such that for every $f \in C_b(X)$ the map  $\mu \to \int f \ d\mu$, with $\mu \in \M(X)$,  continuous. The following classical result characterizes weak* convergence (see \cite[Theorem 2.1]{bi}).

\begin{proposition}[Portmanteau Theorem] \label{port}
Let $(\mu_n)_n, \mu$ be probability measures on $X$. The following statements are equivalent.
\begin{enumerate}
\item The sequence $(\mu_n)_n$ converges to $\mu$ in the weak* topology.
\item For every open set $O \subset X$, the following holds $\mu(O) \leq  \liminf_{n \to \infty} \mu_n(O)$.
\item  For every closed set $C \subset X$, the following holds $\mu(C) \geq  \limsup_{n \to \infty} \mu_n(C)$.
\item\label{port4}  For every set $A \subset X$ such that $\mu(\partial A)=0$, the following holds $\mu(A) = \lim_{n \to \infty} \mu_n(A)$.
\end{enumerate}
\end{proposition}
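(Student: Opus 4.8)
The plan is to prove the four conditions equivalent through the cyclic chain $(1)\Rightarrow(3)\Leftrightarrow(2)$, then $(2)\wedge(3)\Rightarrow(4)$, and finally $(4)\Rightarrow(1)$. Two features of the setting will be used repeatedly: every measure in sight is a probability measure (so $\mu_n(O)=1-\mu_n(X\setminus O)$), and $X$ is a metric space, so distance-to-a-set functions are available as continuous test functions. The equivalence $(2)\Leftrightarrow(3)$ is pure complementation: if $O$ is open then $C=X\setminus O$ is closed, and the identities $\mu_n(O)=1-\mu_n(C)$, $\mu(O)=1-\mu(C)$ turn $\mu(O)\le\liminf_n\mu_n(O)$ into $\mu(C)\ge\limsup_n\mu_n(C)$; since every closed set is such a complement, the two statements coincide.

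For $(1)\Rightarrow(3)$, given a closed set $C$ I would approximate its indicator from above by the continuous functions $f_k(x)=\max\{0,1-k\,\dist(x,C)\}$, each taking values in $[0,1]$, equal to $1$ on $C$, and decreasing pointwise to $1_C$ as $k\to\infty$ (here closedness of $C$ guarantees $\dist(x,C)>0$ off $C$). From $1_C\le f_k$ and hypothesis $(1)$ one gets $\limsup_n\mu_n(C)\le\limsup_n\int f_k\,d\mu_n=\int f_k\,d\mu$, and letting $k\to\infty$ with dominated convergence yields $\limsup_n\mu_n(C)\le\mu(C)$. Next, $(2)\wedge(3)\Rightarrow(4)$ is a squeeze: for $A$ with $\mu(\partial A)=0$ one has $A^\circ\subset A\subset\overline{A}$ with $\mu(A^\circ)=\mu(A)=\mu(\overline{A})$, so applying $(2)$ to the open set $A^\circ$ and $(3)$ to the closed set $\overline A$ gives
\[
\mu(A)=\mu(A^\circ)\le\liminf_n\mu_n(A^\circ)\le\liminf_n\mu_n(A)\le\limsup_n\mu_n(A)\le\limsup_n\mu_n(\overline A)\le\mu(\overline A)=\mu(A),
\]
forcing every inequality to be an equality and hence $\lim_n\mu_n(A)=\mu(A)$.

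The one genuinely subtle step is $(4)\Rightarrow(1)$, and I expect it to be the main obstacle. Given a bounded continuous $f$, after adding a constant and rescaling I may assume $0<f<M$. Setting $A_t=\{f>t\}$, continuity of $f$ gives $\partial A_t\subset\{f=t\}$; since the level sets $\{f=t\}$ are pairwise disjoint, at most countably many can have positive $\mu$-measure, so $\mu(\partial A_t)=0$ for all but countably many $t\in(0,M)$. For each such $t$, hypothesis $(4)$ yields $\mu_n(A_t)\to\mu(A_t)$. Finally the layer-cake identity $\int f\,d\nu=\int_0^M\nu(f>t)\,dt$ for any probability measure $\nu$, combined with the bounded convergence theorem on the finite interval $(0,M)$ (the integrands are bounded by $1$ and converge for a.e.\ $t$), gives $\int f\,d\mu_n\to\int f\,d\mu$, which is $(1)$. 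The delicate points are precisely the a.e.-convergence bookkeeping for the level sets and the passage through the distribution-function representation; the remaining implications are routine once the metric is used to build the approximating test functions.
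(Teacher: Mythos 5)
Your proof is correct. Note that the paper does not prove this proposition at all: it is quoted as a classical result with a citation to Billingsley's \emph{Convergence of probability measures} (Theorem 2.1 there), so there is no in-paper argument to compare against. Your chain $(1)\Rightarrow(3)\Leftrightarrow(2)$, $(2)\wedge(3)\Rightarrow(4)$, $(4)\Rightarrow(1)$ --- with the Lipschitz approximants $\max\{0,1-k\,\dist(x,C)\}$ for the closed-set bound, the interior/closure squeeze, and the layer-cake representation with the observation that at most countably many level sets $\{f=t\}$ carry positive $\mu$-mass --- is precisely the standard textbook proof, and all the steps check out.
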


A relevant feature of the weak* convergence is that there is no loss of mass since the constant function equal to one belongs to $C_b(X)$. That is,
\begin{remark}
If the sequence of probability measures $(\mu_n)_n$ converges in the weak* topology to $\mu$ then $\mu$ is also a probability measure.
\end{remark}
Also note that if the space $(X,\rho)$ is compact then the space of Borel probability measures, $\M(X)$,  is also compact with respect to the weak* topology (see \cite[Theorem 6.5]{wa}). An interesting fact is that if $(X,\rho)$ is separable metric space then $\M(X)$ can be metrized as a separable metric space (see \cite[Theorem 6.2]{pa}).
Actually, there exists an explicit metric that generates the weak* topology and for which $\M(X)$ is separable if $X$ is separable. This is the so called Prohorov metric (see \cite[pp.72-73]{bi}). Therefore, if $X$ is a separable metric space then so is $\M(X)$ despite the fact that the space $C_b(X)$ might not be separable. Actually, if $(\Sigma, \sigma)$ is a non-compact countable Markov shift then $C_b(\Sigma)$ is not separable, as below.

\begin{remark}
Let $(\Sigma, \sigma)$ be a countable (non-compact) Markov shift then $C_b(\Sigma)$ is not separable. Indeed, let $(x_n)_n$ be a fixed sequence of elements of $\Sigma$ with $x_n \in [n]$, where $[n]:=\{(y_1, y_2 ,\dots) \in \Sigma : y_1 =n\}$. Define the set
\begin{equation*}
\mathcal{L}:= \left\{ \phi \in C_b(\Sigma) : \phi(x_n)=0 \text{ or } \phi(x_n)= 1, \text{ for every } n \in \N \right\}. 
\end{equation*}
Note that $\mathcal{L}$ contains uncountably many elements. For every countable subset $\mathcal{C} \subset C_b(\Sigma)$ there exists $\phi \in \mathcal{L}$ such that for every $\psi \in \mathcal{C}$ we have
\begin{equation*}
\| \phi - \psi\| \geq \frac{1}{2},
\end{equation*}
hence $C_b(\Sigma)$ is not separable.
\end{remark}


Let $T:(X,\rho) \to (X,\rho)$ be a continuous dynamical system defined on a metric space. We denote by $\M (X, T)$ the space of $T-$invariant probability measures. In the following lemma we collect relevant information regarding the structure of $\M (X, T)$.

\begin{lemma} \label{lem_inv}
Let $T:(X,\rho) \to (X,\rho)$ be a continuous dynamical system defined on a metric space, then
\begin{enumerate}
\item The space $\M (X, T)$, as a subset of $\M(X)$, is closed in the weak* topology (\cite[Theorem 6.10]{wa}).
\item  If $X$ is compact then so is $\M(X,T)$  with respect to the weak* topology (see \cite[Theorem 6.10]{wa}).
\item The space $\M (X, T)$ is a convex set for which its extreme points are the ergodic measures  (see \cite[Theorem 6.10]{wa}). It is actually a Choquet simplex (each measure is represented in a unique way as a generalized convex combination of the ergodic measures  \cite[p.153]{wa}).
\end{enumerate}
\end{lemma}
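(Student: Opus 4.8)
The plan is to treat the three assertions in order, all of them flowing from the single characterization that a probability measure $\mu\in\M(X)$ is $T$-invariant precisely when $\int f\circ T\dd\mu=\int f\dd\mu$ for every $f\in C_b(X)$. Since $T$ is continuous and $f$ is bounded continuous, $f\circ T$ again lies in $C_b(X)$, so each such identity is an equality between two weak*-continuous functionals of $\mu$, and this is exactly the leverage needed for parts (a) and (b).

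For (a) I would argue by passing to weak* limits. If $\mu_n\to\mu$ in the weak* topology with each $\mu_n\in\M(X,T)$, then for every $f\in C_b(X)$ both $\int f\dd\mu_n\to\int f\dd\mu$ and $\int f\circ T\dd\mu_n\to\int f\circ T\dd\mu$; passing to the limit in the identity $\int f\circ T\dd\mu_n=\int f\dd\mu_n$ yields $\int f\circ T\dd\mu=\int f\dd\mu$, and since $f$ was arbitrary, $\mu$ is invariant. Equivalently, for each fixed $f$ the set of $\mu$ on which $\mu\mapsto\int(f\circ T-f)\dd\mu$ vanishes is weak*-closed, and $\M(X,T)$ is the intersection of these closed conditions. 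Statement (b) is then immediate: when $X$ is compact, $\M(X)$ is weak*-compact (already recorded above in the excerpt), and $\M(X,T)$, being a closed subset of a compact space by (a), is compact.

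Statement (c) requires more. Convexity is clear since invariance is preserved under convex combinations. For the identification of extreme points with ergodic measures I would prove both implications. If $\mu$ is ergodic and $\mu=t\mu_1+(1-t)\mu_2$ with $0<t<1$ and $\mu_1,\mu_2\in\M(X,T)$, then $\mu_1\ll\mu$; the Radon--Nikodym derivative $\dd\mu_1/\dd\mu$ is $T$-invariant $\mu$-almost everywhere by invariance of both measures, hence constant by ergodicity, forcing $\mu_1=\mu$, so $\mu$ is extreme. Conversely, if $\mu$ is not ergodic, choose an invariant Borel set $A$ with $0<\mu(A)<1$ and write $\mu$ as the nontrivial convex combination of the normalized restrictions $\mu(\,\cdot\,\cap A)/\mu(A)$ and $\mu(\,\cdot\,\cap A^c)/\mu(A^c)$, both of which are invariant, so $\mu$ is not extreme.

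The genuinely substantial part, and the step I expect to be the main obstacle, is the simplex property: that every invariant measure is the barycenter of a \emph{unique} probability measure supported on the ergodic measures. This is the content of the ergodic decomposition theorem. I would derive existence of a representing measure from Choquet theory applied to the compact convex set $\M(X,T)$ (in the compact-$X$ case) together with a measurable selection of ergodic components, and I would obtain uniqueness from the mutual singularity of distinct ergodic measures, which pins the representing measure down; a compact convex set enjoying this uniqueness is by definition a Choquet simplex. In the present paper these facts are invoked directly from Walters, so the plan is to cite that reference for part (c) rather than reproduce the full decomposition.
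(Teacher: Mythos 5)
Your proposal is correct and matches the paper's treatment: the paper offers no proof of this lemma, simply citing Walters (Theorem 6.10 and p.~153), and your sketch is precisely the standard argument behind those citations — the weak*-closedness of the invariance conditions $\int f\circ T\,d\mu=\int f\,d\mu$, the Radon--Nikodym/invariant-set characterization of extreme points, and the deferral of the ergodic decomposition (uniqueness of the representing measure) to the reference. The only point worth flagging is that the full Choquet simplex claim for non-compact $X$ needs $X$ to be a reasonable (e.g.\ separable, standard Borel) space rather than an arbitrary metric space, a caveat the paper itself glosses over; your explicit restriction to the compact-$X$ case for the Choquet-theoretic step is, if anything, more careful than the source.
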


\begin{definition} Let $T:(X,\rho) \to (X,\rho)$ be a continuous dynamical system defined on a metric space. We denote by $\M_e(X,T)$ the set  of ergodic $T$-invariant  probability measures. An ergodic  measure $\mu \in \M_e( X, T)$ supported on a periodic orbit will  be called a \emph{periodic measure}. Denote by $\M_p(X,T)$  the set of periodic measures. 
\end{definition}

The next result was obtained by Coud\'ene and  Schapira \cite[Section 6]{cs} as a consequence of shadowing and the Anosov closing Lemma.
\begin{theorem} \label{lem:cs}
Let $(\Sigma, \sigma)$ be a transitive countable Markov shift then  $\M_p(\Sigma,\sigma)$ is dense in  $\M(\Sigma,\sigma)$ with respect to the weak* topology.
\end{theorem}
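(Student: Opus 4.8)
The plan is to prove that periodic measures are weak* dense in $\M(\Sigma,\sigma)$ by showing that an arbitrary ergodic measure can be approximated, and then extending to all invariant measures by the density of ergodic measures in the Choquet simplex (via the ergodic decomposition recorded in Lemma~\ref{lem_inv}(c)). Since the weak* topology on $\M(\Sigma,\sigma)$ is metrizable (the Prohorov metric) and since, by the Remark above, weak* convergence can be tested against bounded Lipschitz or bounded uniformly continuous functions, it suffices to produce, for a fixed ergodic $\mu$ and finitely many test functions $f_1,\dots,f_k$ together with $\epsilon>0$, a periodic measure $\nu$ with $|\int f_i\,d\nu-\int f_i\,d\mu|<\epsilon$ for all $i$. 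Because cylinders generate the topology, I would in fact reduce to matching the $\mu$-measures of a finite collection of cylinders up to $\epsilon$, which is the natural combinatorial target for a periodic orbit.

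\medskip

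The core of the argument is the standard Birkhoff-plus-closing scheme. First I would invoke the Birkhoff ergodic theorem: for $\mu$-a.e.\ point $x$, the empirical averages $\frac1n\sum_{j=0}^{n-1}\delta_{\sigma^j x}$ converge weak* to $\mu$, so for large $n$ the orbit segment $x,\sigma x,\dots,\sigma^{n-1}x$ spends the right proportion of time in each of the relevant cylinders. Fix such a generic $x$ and a large $n$. The word $w=x_1\cdots x_n$ is admissible, but its empirical distribution need not yet come from a closed orbit. Here I would use the transitivity of $(\Sigma,\sigma)$: by Theorem~\ref{lem:cs} this is precisely the input Coud\'ene and Schapira supply, namely that one can close up a long generic orbit segment into a genuine periodic orbit of comparable length by appending a bounded connecting word. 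The shadowing/Anosov-closing property guarantees that the resulting periodic point $p$ (with period close to $n$) has an orbit that tracks the segment $x,\dots,\sigma^{n-1}x$ closely, so its periodic measure $\nu_p$ is weak* close to the empirical average, hence weak* close to $\mu$.

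\medskip

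For the extension from ergodic to arbitrary invariant measures I would argue that $\overline{\M_p(\Sigma,\sigma)}$ is a weak*-closed convex set (convexity is inherited because a convex combination of nearby periodic measures approximates the corresponding convex combination of ergodic measures, using the metrizability to pass from finite to countable combinations) containing all ergodic measures; since every $\mu\in\M(\Sigma,\sigma)$ is a generalized convex combination of ergodic measures by the Choquet simplex structure, it lies in this closure. Alternatively, one can simply cite that $\M_e$ is weak* dense in $\M$ for a Choquet simplex whose extreme points already lie in the closure of $\M_p$, which collapses the two steps.

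\medskip

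The main obstacle is the closing step in the non-compact setting: the connecting word used to return to the starting symbol must be controllable so that it contributes a vanishing fraction of the time as $n\to\infty$, and one must ensure the periodic point stays in the part of $\Sigma$ where the cylinder statistics are being measured, rather than escaping to infinity. In a compact shift of finite type this is automatic, but for a countable alphabet transitivity only gives \emph{some} connecting path, whose length a priori depends on the symbols involved; the content of Theorem~\ref{lem:cs}, which I am permitted to assume, is exactly that shadowing and the Anosov closing lemma can be made to work uniformly enough on each cylinder to absorb this difficulty. Consequently, at the level of this plan, the proof is essentially an invocation of Theorem~\ref{lem:cs} for the ergodic case followed by the Choquet-simplex density of ergodic measures; the real work has been front-loaded into \cite{cs}, and the remaining task is to check that convex combinations of periodic measures remain in the weak* closure.
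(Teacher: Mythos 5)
First, note that the paper itself does not prove this statement: it is quoted from Coud\`ene and Schapira \cite[Section 6]{cs}, so there is no internal proof to compare against. Read as a proof, your proposal has a genuine circularity: the statement you are asked to prove \emph{is} Theorem \ref{lem:cs}, and at the crucial closing step you write that the ability to close a long generic orbit segment into a periodic orbit ``is precisely the input Coud\`ene and Schapira supply'' via ``Theorem \ref{lem:cs}, which I am permitted to assume.'' You cannot assume it; that is the content of the theorem. The step can in fact be done honestly and without any shadowing machinery: fix a symbol $a$ with $\mu([a])>0$, take a $\mu$-generic point $x\in[a]$, and let $n_k$ be the successive return times of $x$ to $[a]$. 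Since $x_{n_k+1}=a=x_1$, the word $x_1\cdots x_{n_k}$ closes up into an admissible periodic word with \emph{no} connecting word at all, and the resulting periodic point shares the prefix $x_1\cdots x_{n_k}$ with $x$, so its periodic measure is close to the empirical measure of $x$ (test against finitely many cylinder indicators), hence to $\mu$. This removes the worry you raise about uncontrolled connecting paths in the countable-alphabet setting.

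The second gap is in the passage from ergodic to general invariant measures. You assert that $\overline{\M_p(\Sigma,\sigma)}$ is convex because a convex combination of nearby periodic measures approximates the corresponding convex combination of ergodic measures --- but a convex combination of periodic measures is not a periodic measure, so this only shows that $\mu$ lies in the closed convex \emph{hull} of $\M_p(\Sigma,\sigma)$, not in $\overline{\M_p(\Sigma,\sigma)}$. To finish you must approximate a finite rational convex combination $\frac{1}{N}\sum_{j}\mu_j$ of ergodic measures by a \emph{single} periodic measure; this is done by choosing generic points for each $\mu_j$ inside a fixed finite union of cylinders $\bigcup_{s\le M}[s]$, so that the finitely many connecting words needed to concatenate the orbit segments have uniformly bounded length and contribute a vanishing fraction of the period. (This is exactly the construction the paper carries out later in Proposition \ref{densusp}, together with the reduction to finite convex combinations in Lemma \ref{noref}; the ergodic decomposition alone gives an integral, not a finite combination.) Without this concatenation step the argument does not close.
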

%

\subsection{The vague topology}

Let $(X,\rho)$ be a locally compact metric space. Denote by $M_{\le 1}(X)$ the set of Borel non-negative measures on $X$ such that $\mu(X) \leq 1$. The set of continuous functions of compact support, that is continuous functions $f:X \to \R$ for which the closure of the set $\{x \in X : f(x) \neq 0 \}$ is compact, will be denoted by $C_c(X)$. Note that $C_c(X) \subset C_b(X)$. We will  consider the following notion of convergence.
 
\begin{definition}
A sequence $(\mu_n)_n\subset \M_{\le1}(X)$ converges to  $\mu\in \M_{\le1}(X)$ in the vague topology if for every   $f \in C_c(\Sigma)$ we have 
\begin{equation*}
\lim_{n \to \infty} \int f d \mu_n = \int f d \mu.
\end{equation*}
\end{definition}


The vague topology  is the coarsest topology on $\M_{\le 1}(X)$ such that for every $f$ continuous and of compact support, the map $\mu \to \int f d\mu$ is continuous. We stress that the total mass is not necessarily preserved in the vague topology. A sequence of probability measures can converge in the vague topology to a non-negative measure of total mass less or equal to one.  If $X$ is compact then $C_c(X)=C_b(X)$ and therefore the vague topology coincides with the weak* topology. We collect the following results,

\begin{remark} \label{rem_va}
Let $(X,\rho)$ be a metric space. Note that  the weak* topology extends to $\M_{\leq 1}(X)$.
\begin{enumerate}
\item If $X$ is compact then $\M_{\leq 1}(X)$ is compact with respect to the weak* topology (see \cite[Corollary 13.30]{kl}).
\item If $X$ is a locally compact separable metric space then $\M_{\leq 1}(X)$ is compact with respect to the vague topology (see \cite[Corollary 13.31]{kl}) and metrizable (see \cite[13.4.2]{di}).
\item Let $X$ be a locally compact separable metric space. The sequence $(\mu_n)_n$ converges vaguely to $\mu$ and $\lim_{n \to \infty} \mu_n(X)= \mu(X)$ if and only if  $(\mu_n)_n$ converges in weak* topology to $\mu$   (see \cite[Theorem 13.16]{kl}).
\item Let $X$ be a locally compact separable metric space. The sequence $(\mu_n)_n$ converges vaguely to $\mu$ and the sequence $(\mu_n)_n$ is tight if and only  if  $(\mu_n)_n$ converges in weak* topology to $\mu$   (see \cite[Theorem 13.35]{kl}).
\end{enumerate}
\end{remark}

Let $T:(X,\rho) \to (X,\rho)$ be a continuous dynamical system defined on a metric space.
If $(\mu_n)_n \subset \M(X,T)$ is a sequence of $T-$invariant probability measures that converges in the vague topology to a non zero measure $\mu$, then the normalized measure $\mu( \cdot) / \mu(X)$ 
 is a $T-$invariant probability.  We  call the measure $\mu$ an invariant sub-probability and denote by  $\M_{\le 1}(X,T)$ the space of $T-$invariant sub-probability measures. Observe that the zero measure belongs to $\M_{\le 1}(X,T)$.

\begin{lemma} \label{lem:comp}
Let $T:(X,\rho) \to (X,\rho)$ be a continuous dynamical system defined on a metric space, then
\begin{enumerate}
\item  The space  $\M(X,T)$  is a closed subset of $\M_{\le 1}(X,T)$ in the weak* topology.
\item If $X$ is a locally compact separable metric space then the space $\M_{\le 1}(X,T)$ is compact in the vague topology.
\end{enumerate}
\end{lemma}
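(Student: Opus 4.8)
The plan is to treat the two statements separately; part~(1) is immediate from the continuity of the total-mass functional, while part~(2) contains the real content.

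For part~(1), recall that the weak* topology extends to $\M_{\le 1}(X)$ and that the constant function $\mathbf 1$ lies in $C_b(X)$. Hence the total-mass functional $\mu\mapsto \mu(X)=\int \mathbf 1\,d\mu$ is weak*-continuous on $\M_{\le 1}(X,T)$. Since every element of $\M_{\le 1}(X,T)$ is by definition $T$-invariant, we have the identity
\[
\M(X,T)=\{\mu\in\M_{\le 1}(X,T):\mu(X)=1\},
\]
which exhibits $\M(X,T)$ as the preimage of the closed set $\{1\}$ under a continuous map, and is therefore closed.

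For part~(2), by Remark~\ref{rem_va}(2) the space $\M_{\le 1}(X)$ is compact and metrizable in the vague topology, so it suffices to prove that $\M_{\le 1}(X,T)$ is sequentially closed; a closed subset of a compact space is compact. I would take a sequence $(\mu_n)_n\subset\M_{\le 1}(X,T)$ converging vaguely to some $\mu\in\M_{\le 1}(X)$ and show that $\mu$ is $T$-invariant. The main obstacle is that for $g\in C_c(X)$ the composition $g\circ T$ is bounded and continuous but need not have compact support, since $T$ is merely continuous; thus vague convergence gives no direct control over $\int g\circ T\,d\mu_n$. To circumvent this I would first establish the one-sided (lower semicontinuity) inequality
\[
\int h\,d\mu\le \liminf_{n\to\infty}\int h\,d\mu_n
\]
for every nonnegative continuous $h:X\to\R$. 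As $X$ is separable and locally compact it is $\sigma$-compact, so one can choose $\phi_j\in C_c(X)$ with $0\le\phi_j\uparrow 1$; then $h\phi_j\in C_c(X)$ and $h\phi_j\uparrow h$, and applying vague convergence to each $h\phi_j$ followed by monotone convergence yields the inequality.

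Finally, I would apply this with $h=g\circ T$ for nonnegative $g\in C_c(X)$. Using the invariance of each $\mu_n$ and vague convergence,
\[
\int g\circ T\,d\mu\le \liminf_{n\to\infty}\int g\circ T\,d\mu_n=\liminf_{n\to\infty}\int g\,d\mu_n=\int g\,d\mu,
\]
so that $\mu\circ T^{-1}\le\mu$ as measures, the class $C_c(X)$ being order-determining for Radon measures on $X$. Since $T^{-1}(X)=X$ gives $(\mu\circ T^{-1})(X)=\mu(X)$, the nonnegative measure $\mu-\mu\circ T^{-1}$ has total mass zero and hence vanishes. Therefore $\mu\circ T^{-1}=\mu$, so $\mu$ is $T$-invariant and lies in $\M_{\le 1}(X,T)$; this shows $\M_{\le 1}(X,T)$ is vaguely closed and thus compact, completing the proof.
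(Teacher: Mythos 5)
Your proof is correct. It is worth noting that it does strictly more than the paper's own proof, which disposes of part (2) by citing only the compactness of $\M_{\le 1}(X)$ in the vague topology (Remark \ref{rem_va}) and leaves implicit the fact that $\M_{\le 1}(X,T)$ is a vaguely closed subset of $\M_{\le 1}(X)$ --- precisely the point where the real work lies, since for $g\in C_c(X)$ the function $g\circ T$ is generally not compactly supported and vague convergence alone does not control $\int g\circ T\,d\mu_n$. Your route around this (establish $\int h\,d\mu\le\liminf_n\int h\,d\mu_n$ for nonnegative continuous $h$ via an exhaustion $\phi_j\uparrow 1$ in $C_c(X)$ and monotone convergence, apply it to $h=g\circ T$ to get $\mu\circ T^{-1}\le\mu$, and then kill the difference measure by comparing total masses) is a clean and complete way to supply the missing closedness; the appeal to $\sigma$-compactness is legitimate since a locally compact separable metric space is $\sigma$-compact, and $C_c(X)$ is indeed order-determining for finite Borel measures on such a space by inner regularity. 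Part (1) is also fine and is arguably more direct than the paper's citation of Lemma \ref{lem_inv}: exhibiting $\M(X,T)$ as the preimage of $\{1\}$ under the weak*-continuous total-mass functional settles it in one line.
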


\begin{proof}
The first claim is a consequence of Lemma \ref{lem_inv}, while the second follows from Remark \ref{rem_va}.
\end{proof}

\begin{proposition} \label{lem_ext}
Let $T:(X,\rho) \to (X,\rho)$ be a continuous dynamical system defined on a metric space. Then the space $\M_{\le 1} (X, T)$ is a convex set and its extreme points are the ergodic probability measures and the zero measure.
\end{proposition}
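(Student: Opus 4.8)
The plan is to establish convexity by a one-line mass computation, and then to pin down the extreme points by reducing everything to the description of $\M(X,T)$ furnished by Lemma~\ref{lem_inv}(3). For convexity, given $\mu,\nu\in\M_{\le 1}(X,T)$ and $t\in[0,1]$, the measure $t\mu+(1-t)\nu$ is clearly non-negative and $T$-invariant, and its total mass is $t\mu(X)+(1-t)\nu(X)\le t+(1-t)=1$, so it again lies in $\M_{\le 1}(X,T)$.

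Next I would check that the proposed extreme points really are extreme. For the zero measure, suppose $0=t\mu_1+(1-t)\mu_2$ with $t\in(0,1)$; since each $\mu_i$ is non-negative and the coefficients are positive, evaluating on any Borel set forces $\mu_1=\mu_2=0$, so $0$ cannot be a nontrivial convex combination of distinct elements. For an ergodic probability measure $\mu$, suppose $\mu=t\mu_1+(1-t)\mu_2$ with $t\in(0,1)$ and $\mu_1,\mu_2\in\M_{\le 1}(X,T)$. Taking total masses gives $1=t\mu_1(X)+(1-t)\mu_2(X)$ with $\mu_i(X)\le1$; since $t,1-t>0$ this forces $\mu_1(X)=\mu_2(X)=1$, that is $\mu_1,\mu_2\in\M(X,T)$. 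As $\mu$ is an extreme point of $\M(X,T)$ by Lemma~\ref{lem_inv}(3), we conclude $\mu_1=\mu_2=\mu$, so $\mu$ is extreme in $\M_{\le 1}(X,T)$.

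The main content is the converse: if $\mu\in\M_{\le 1}(X,T)$ is extreme and $\mu\ne0$, then $\mu$ is an ergodic probability measure. Writing $c=\mu(X)\in(0,1]$, the normalized measure $\bar\mu=\mu/c$ belongs to $\M(X,T)\subset\M_{\le 1}(X,T)$, and $\mu=c\,\bar\mu+(1-c)\cdot 0$ exhibits $\mu$ as a convex combination of $\bar\mu$ and the zero measure. If $c<1$ these two endpoints are distinct from each other (as $\bar\mu$ is a probability measure) and from $\mu$, contradicting extremality; hence $c=1$ and $\mu$ is a probability measure. Since $\M(X,T)\subset\M_{\le 1}(X,T)$, an extreme point of the larger set that happens to lie in $\M(X,T)$ is a fortiori extreme in $\M(X,T)$, so by Lemma~\ref{lem_inv}(3) $\mu$ is ergodic.

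I do not expect a serious obstacle here: the argument is essentially bookkeeping with total masses, the only points requiring care being that the convex decompositions produced are genuinely nontrivial (distinct endpoints, coefficients in $(0,1)$) and that extremality transfers correctly between $\M(X,T)$ and the ambient set $\M_{\le 1}(X,T)$. The one structural input that does the real work is the classical identification of the extreme points of $\M(X,T)$ with the ergodic measures, which is quoted as Lemma~\ref{lem_inv}(3).
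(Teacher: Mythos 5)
Your argument is correct and follows essentially the same route as the paper: convexity by a mass computation, the observation that any invariant sub-probability with mass strictly between $0$ and $1$ splits nontrivially as a combination of a probability measure and the zero measure, and the reduction of the remaining cases to the classical identification of the extreme points of $\M(X,T)$ with the ergodic measures (Lemma~\ref{lem_inv}). You simply spell out the mass-bookkeeping and the transfer of extremality between $\M(X,T)$ and $\M_{\le 1}(X,T)$ that the paper leaves implicit.
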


\begin{proof} The convexity of the space $M_{\le 1} (X, T)$ is direct. Note that every invariant sub-probability $\mu \in \M_{\le 1} (X, T)$ with $0< \mu(X)<1$, is  the convex combination of a measure in $\M(X,T)$ and the zero measure. Also note that the zero measure is not the convex combination of any set of positive measures. The result then follows from Lemma \ref{lem_inv}.
\end{proof}

%
%
%

\subsection{The topology of convergence on cylinders} \label{sec:cyl} Several relevant countable Markov shifts are not locally compact. Therefore a good notion of convergence in the space of sub-probabilities is required in this setting. The vague topology is of no use in the non-locally compact setting since in this case the space $C_c(X)$  might be empty, as below.

\begin{remark}
If $\Sigma$ is a non-locally compact transitive countable Markov shift then $C_c(\Sigma)= \emptyset$. Indeed, if $K \subset \Sigma$ is a compact set then it must have empty interior (see \cite[Observation 7.2.3 (iv)]{ki}).  Therefore $K= \partial K$, that is, the compact set equals its topological boundary. If $f \in C_c(\Sigma)$ then consider the open set
\begin{equation*}
A:= \left\{x \in \Sigma : f(x) \neq 0 \right\} = f^{-1}\left(	\R \setminus \{0\}	\right).
\end{equation*}
Note that the support of $f$ is $K=\overline{A}$, that is the closure of the set $A$. But since $f \in C_c(\Sigma)$ the set $K$ is compact. This means that the compact set $K$ contains an open set $A$, contradicting the fact that $K= \partial K$. Therefore $C_c(\Sigma) = \emptyset$. 

\end{remark}

We now define the notion of convergence--that generalizes the vague topology to the non-locally compact setting--which is the main topic of this work.

\begin{definition} \label{def:topo.cyl} Let $(\Sigma,\sigma)$ be a countable Markov shift and $(\mu_n)_n,  \mu$ invariant sub-probability measures. We say that a sequence $(\mu_n)_n$ \emph{converges on cylinders} to $\mu$ if $\lim_{n\to\infty}\mu_n(C)=\mu(C)$, for every cylinder $C\subset \Sigma$.
The topology on $\M_{\le 1}(\Sigma)$  induced by this convergence is called the \emph{topology of convergence on cylinders}. 
\end{definition}

We emphasize that this notion of convergence induces a topology because the collection of cylinders is countable and it is a basis for the topology on $\Sigma$. For brevity we will frequently say that $(\mu_n)_n\subset \M_{\le1}(\Sigma)$ \emph{converges on cylinders} to $\mu$ if the sequence converges in the topology of convergence on cylinders. 
%

\begin{proposition}\label{metriz}
The \emph{topology of convergence on cylinders} on $\M_{\le 1}(\Sigma)$  is metrizable.
\end{proposition}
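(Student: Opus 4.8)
The plan is to exhibit an explicit metric on $\M_{\le 1}(\Sigma)$ that induces the topology of convergence on cylinders. The key structural fact is that $\Sigma$ has a countable basis of cylinders, so enumerate all cylinders as $C_1, C_2, C_3, \dots$. The natural candidate is a weighted $\ell^1$-type metric comparing the measures of all cylinders: for $\mu, \nu \in \M_{\le 1}(\Sigma)$ set
\begin{equation*}
D(\mu,\nu) := \sum_{k=1}^{\infty} 2^{-k} \bigl| \mu(C_k) - \nu(C_k) \bigr|.
\end{equation*}
Since every measure in $\M_{\le 1}(\Sigma)$ has total mass at most $1$, each summand is bounded by $2^{-k}$, so the series converges and $D$ is finite (indeed $D(\mu,\nu) \le 1$).

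First I would verify that $D$ is genuinely a metric. Symmetry and the triangle inequality are immediate from the corresponding properties of $|\cdot|$ on $\R$ together with the fact that a sum of (pseudo)metrics weighted by positive summable coefficients is again a (pseudo)metric. The only point requiring a dynamical/topological input is definiteness: $D(\mu,\nu)=0$ forces $\mu(C_k)=\nu(C_k)$ for every $k$, i.e.\ the two measures agree on all cylinders, and I must conclude $\mu=\nu$. This follows because the cylinders form a $\pi$-system (an intersection of two cylinders is a cylinder or empty) generating the Borel $\sigma$-algebra of $\Sigma$, so by the standard uniqueness theorem for measures two finite Borel measures agreeing on this generating $\pi$-system (and on $\Sigma$ itself, which is a countable increasing union of the length-one cylinders $[a]$) must coincide.

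Next I would check that the metric topology of $D$ is exactly the topology of convergence on cylinders, which for a metrizable topology it suffices to verify at the level of sequential convergence. If $\mu_n \to \mu$ on cylinders, then each term $|\mu_n(C_k)-\mu(C_k)| \to 0$, and since the tail of the series is uniformly bounded by $\sum_{k>N} 2^{-k}$, a standard $\epsilon/2$ argument (choose $N$ to control the tail, then use termwise convergence for $k \le N$) gives $D(\mu_n,\mu)\to 0$. Conversely, if $D(\mu_n,\mu)\to 0$ then for each fixed $k$ we have $2^{-k}|\mu_n(C_k)-\mu(C_k)| \le D(\mu_n,\mu) \to 0$, so $\mu_n(C_k)\to\mu(C_k)$ for every $C_k$, which is precisely convergence on cylinders. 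Since both topologies are first-countable (the topology of convergence on cylinders is, as noted in the excerpt, induced by a countable family of conditions), agreement of convergent sequences pins down the topology.

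The main obstacle, and the only place genuine care is needed, is the definiteness step: one must be sure that agreement on cylinders propagates to agreement on all Borel sets. I expect to invoke the $\pi$-$\lambda$ (Dynkin) theorem, noting that the collection of cylinders is closed under intersection, and handling the finiteness hypothesis by exhausting $\Sigma = \bigcup_{a\in\N}[a]$ through the length-one cylinders so that the uniqueness theorem for measures on a generating $\pi$-system applies. Everything else is the routine verification that a weighted sum of bounded seminorms metrizes coordinatewise convergence, so I would state those parts briefly and concentrate the argument on the measure-theoretic uniqueness.
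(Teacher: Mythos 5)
Your proof is correct and follows essentially the same route as the paper: the metric $D$ you define is exactly the paper's metric $d(\mu,\nu)=\sum_{n\ge1}2^{-n}|\mu(C_n)-\nu(C_n)|$, and the verifications of the metric axioms and of the equivalence of sequential convergence are the same. The only (minor) divergence is in the definiteness step: the paper deduces $\mu=\nu$ from agreement on cylinders via outer regularity of Borel measures on a metric space, whereas you invoke the $\pi$--$\lambda$ theorem for the $\pi$-system of cylinders; both are standard and equally valid.
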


\begin{proof}
Consider the metric 
\begin{align}\label{metric} d(\mu,\nu)=\sum_{n\ge1} \frac{1}{2^n}|\mu(C_n)-\nu(C_n)|,\end{align}
where $(C_n)_n$ is some enumeration of the cylinders on $\Sigma$. Note that $d(\mu,\nu)=0,$ if and only if $\mu=\nu$. Indeed, if  $d(\mu,\nu)=0$ then $\mu(C)=\nu(C)$, for every cylinder $C$. By the outer regularity of Borel measures on a metric space we conclude that this is equivalent to say that $\mu=\nu$.  Symmetry is clear  and the triangle inequality follows directly from the triangle inequality in $\R$. It is clear from the definition of $d$ that it induces the desired notion of convergence on $\M_{\le 1}(\Sigma)$, that is, it generates the  topology of convergence on cylinders.
\end{proof}
It worth pointing out that since $C_b(\Sigma)$ is not separable  we can not endow the weak* topology with a metric like $d$. 

\begin{remark} \label{rem:escape}
Note that the topology of convergence on cylinders, like the vague topology,  allows for mass to escape. Indeed, let $\Sigma $ be the full shift on $\N$, which is not locally compact. Denote by $\delta_{\overline{n}}$ the atomic measure supported on the point $\overline{n}:=(n, n, n, \dots)$. Then the sequence $(\delta_{\overline{n}})_n$ converges in the topology of convergence on cylinders to the zero measure.
\end{remark}

Despite Remark \ref{rem:escape} the topology of convergence on cylinders is closely related to the weak* topology. If there is no loss of mass both notions  coincide.

\begin{lemma}\label{equivtop} Let $(\Sigma,\sigma)$ be a countable Markov shift,  $\mu$ and $(\mu_n)_n$ be probability measures on $\Sigma$. The following assertions are equivalent.
\begin{enumerate}
\item The sequence $(\mu_n)_n$ converges in the weak* topology to $\mu$.
\item The sequence $(\mu_n)_n$ converges on cylinders to $\mu$. 
\end{enumerate}
\end{lemma}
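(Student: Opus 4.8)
The plan is to prove the two implications separately, with the direction $(2)\Rightarrow(1)$ carrying essentially all the difficulty.

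For $(1)\Rightarrow(2)$, the key observation is that every cylinder $C\subset\Sigma$ is \emph{clopen}: it is open by the very definition of the topology on $\Sigma$, and its complement is a countable union of cylinders, hence open as well, so $C$ is also closed. Consequently $\partial C=\emptyset$, and in particular $\mu(\partial C)=0$ for any Borel measure $\mu$. Weak* convergence then yields $\mu_n(C)\to\mu(C)$ immediately from the Portmanteau Theorem (Proposition~\ref{port}, part~\eqref{port4}), which is exactly convergence on cylinders. No assumption beyond clopenness is needed here.

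For $(2)\Rightarrow(1)$, I would first reduce the class of test functions: by the remark following Definition~\ref{def:wc} it suffices to verify $\int f\,d\mu_n\to\int f\,d\mu$ for every bounded uniformly continuous $f$, say with $\|f\|\le M$. Fix $\epsilon>0$. Since $f$ is uniformly continuous, $var_k(f)\to0$, so I can choose $k$ with $var_k(f)<\epsilon$; then $f$ oscillates by less than $\epsilon$ on each cylinder of length $k$. The cylinders of length $k$ form a countable partition $\{C_i\}_{i\ge1}$ of $\Sigma$, and picking $x_i\in C_i$ gives $\bigl|\int f\,d\nu-\sum_i f(x_i)\nu(C_i)\bigr|\le\epsilon$ for any probability measure $\nu$, because $\sum_i\nu(C_i)=\nu(\Sigma)=1$.

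The main obstacle is that the approximating sum $\sum_i f(x_i)\nu(C_i)$ is an \emph{infinite} sum, so the termwise convergence $\mu_n(C_i)\to\mu(C_i)$ supplied by hypothesis $(2)$ does not transfer to the sum without a uniform control of the tail. This is precisely where the assumption that all $\mu_n$ and $\mu$ are \emph{probability} measures (not merely sub-probabilities) is essential. Since $\mu(\Sigma)=1$, I can fix a finite index set $F$ with $\mu\bigl(\bigcup_{i\notin F}C_i\bigr)<\epsilon$. The finite sum satisfies $\sum_{i\in F}\mu_n(C_i)\to\sum_{i\in F}\mu(C_i)>1-\epsilon$, so for all large $n$ we have $\sum_{i\in F}\mu_n(C_i)>1-2\epsilon$; as $\mu_n(\Sigma)=1$ this forces $\mu_n\bigl(\bigcup_{i\notin F}C_i\bigr)<2\epsilon$, bounding the tails of both sums by $2M\epsilon$ and $M\epsilon$ respectively.

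Combining the three estimates, for all large $n$,
\[
\Bigl|\int f\,d\mu_n-\int f\,d\mu\Bigr|\le 2\epsilon+\Bigl|\sum_{i\in F}f(x_i)\bigl(\mu_n(C_i)-\mu(C_i)\bigr)\Bigr|+3M\epsilon,
\]
where the middle term is a \emph{finite} sum that tends to $0$ as $n\to\infty$ by hypothesis $(2)$. Hence $\limsup_n\bigl|\int f\,d\mu_n-\int f\,d\mu\bigr|\le(2+3M)\epsilon$, and letting $\epsilon\to0$ gives weak* convergence. In summary, the clopenness of cylinders settles one direction through Portmanteau, while the converse is a tightness-type argument whose only delicate point is exploiting total mass $1$ to control the tail contribution uniformly in $n$.
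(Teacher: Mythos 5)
Your proof is correct, and while the first implication matches the paper's (both hinge on cylinders being clopen --- the paper integrates $1_C\in C_b(\Sigma)$ directly, you invoke part~(\ref{port4}) of Proposition~\ref{port} with $\partial C=\emptyset$; these are the same observation), your argument for $(2)\Rightarrow(1)$ takes a genuinely different route. The paper never touches test functions in that direction: it writes an arbitrary open set $\cO$ as a countable disjoint union of cylinders, gets $\liminf_n\mu_n(\cO)\ge\mu\bigl(\bigcup_{k=1}^M C_k\bigr)$ from finite truncations, passes to the supremum over $M$ by countable additivity of $\mu$, and then cites the open-set criterion of the Portmanteau theorem. You instead approximate a bounded uniformly continuous $f$ by a function that is constant on cylinders of length $k$ and control the infinite sum by a tightness argument. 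Each approach has its virtue: the paper's is shorter and delegates the role of total mass entirely to the Portmanteau theorem (whose open-set criterion is only valid for probability measures), whereas yours is self-contained and makes explicit exactly where the hypothesis $\mu(\Sigma)=\mu_n(\Sigma)=1$ enters --- namely, that a finite family of length-$k$ cylinders carrying $\mu$-mass at least $1-\epsilon$ must carry $\mu_n$-mass at least $1-2\epsilon$ for large $n$. Your version also foreshadows more transparently why the equivalence fails for sub-probability measures and how it is restored under a no-loss-of-mass or tightness hypothesis, which is the theme of Remark~\ref{rem_va} and Lemma~\ref{lem:noescape}. Both proofs are complete; yours is longer but arguably more informative.
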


\begin{proof}
First assume that  $(\mu_n)_n$ converges in the weak* topology to $\mu$. Define $f:=1_C\in C_b(\Sigma)$, where $1_C$ is the characteristic function of a cylinder. The weak* convergence implies that $\lim_{n\to\infty} \int fd\mu_n=\int fd\mu$, which is equivalent to say that 
$$\lim_{n\to\infty} \mu_n(C) =\mu(C).$$
Since the cylinder $C$ was chosen arbitrarily we conclude that $(\mu_n)_n$ converges on cylinders to $\mu$. Now assume that $(\mu_n)_n$ converges on cylinders to $\mu$. Observe that an open set $\cO$ can be written as a countable union of disjoint cylinders, say $\cO=\bigcup_{k\ge 1}C_k$. Therefore $$\liminf_{n\to\infty}\mu_n(\cO)\ge \liminf_{n\to\infty}\mu_n\left(\bigcup_{k=1}^M C_k \right)=\mu\left(\bigcup_{k=1}^M C_k\right),$$
for every $M$. We conclude that $$\liminf_{n\to\infty}\mu_n(\cO)\ge \mu(\cO).$$ 
Proposition \ref{port} implies that $(\mu_n)_n$ converges in the weak* topology to $\mu$. 
\end{proof}

We will now prove that the topology of convergence on cylinders generalizes the vague topology. More precisely, on locally compact countable Markov shifts both topologies coincide. 

\begin{lemma} \label{lem_loc_cyl}
Let $(\Sigma,\sigma)$ be a locally compact countable Markov shift and $\mu , (\mu_n)_n \in \M_{\le 1}(\Sigma)$. The following assertions are equivalent.
\begin{enumerate}
\item The sequence $(\mu_n)_n$ converges in the vague topology to $\mu$.
\item The sequence $(\mu_n)_n$ converges on cylinders to $\mu$. 
\end{enumerate}
\end{lemma}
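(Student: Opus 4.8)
The plan is to prove the two implications separately, exploiting the structural feature that distinguishes the locally compact case: in a locally compact countable Markov shift every cylinder is simultaneously open and \emph{compact}. I would first record this fact, since it is the only place where local compactness genuinely enters. Local compactness means each symbol has finite forward degree, so the set $A_k$ of symbols reachable in $k$ steps from a fixed initial word is finite for every $k$. Hence a cylinder $C=[a_1\cdots a_m]$ is contained, inside the full sequence space $\prod_k\N$, in the product of finite sets $\prod_k A_k$, which is compact; being closed there, $C$ is itself compact (this is essentially \cite[Observation 7.2.3]{ki}). Since $C$ is clopen its indicator $1_C$ is continuous, and since $C$ is compact $1_C$ has compact support, so $1_C\in C_c(\Sigma)$.

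Given this, the implication (1)$\Rightarrow$(2) is immediate: if $(\mu_n)_n$ converges vaguely to $\mu$, then testing against $1_C\in C_c(\Sigma)$ gives $\mu_n(C)=\int 1_C\,d\mu_n\to\int 1_C\,d\mu=\mu(C)$ for every cylinder $C$, which is exactly convergence on cylinders.

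For (2)$\Rightarrow$(1) I would approximate each test function uniformly by finite linear combinations of cylinder indicators. Fix $f\in C_c(\Sigma)$ with compact support $K$, and let $\epsilon>0$. A continuous function with compact support on a metric space is uniformly continuous, so $var_n(f)\to0$ and there is an $n$ with $var_n(f)<\epsilon$. The length-$n$ cylinders partition $\Sigma$ into open sets, so by compactness only finitely many of them, say $C_1,\dots,C_N$, meet $K$, and these cover $K$. Choosing $x_i\in C_i$ and setting $g=\sum_{i=1}^N f(x_i)1_{C_i}$, one gets $\|f-g\|_\infty\le\epsilon$ (on each $C_i$ by the variation bound, and off $\bigcup_i C_i$ both functions vanish, as $f=0$ there). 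Then I would split
$$\left|\int f\,d\mu_n-\int f\,d\mu\right|\le\left|\int(f-g)\,d\mu_n\right|+\left|\int g\,d\mu_n-\int g\,d\mu\right|+\left|\int(g-f)\,d\mu\right|.$$
The outer two terms are each at most $\|f-g\|_\infty\le\epsilon$ because $\mu_n,\mu$ have total mass at most $1$, while the middle term equals $\bigl|\sum_{i=1}^N f(x_i)(\mu_n(C_i)-\mu(C_i))\bigr|$, a finite sum tending to $0$ by hypothesis (2). Letting $n\to\infty$ and then $\epsilon\to0$ yields vague convergence.

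The only real subtlety, and the step I would be most careful about, is the compactness of cylinders: this is precisely where local compactness is used, and where the statement fails in general. Indeed, as the preceding remark shows, in the non-locally-compact case $C_c(\Sigma)=\emptyset$, so vague convergence becomes vacuous and the equivalence degenerates; conversely, the direction (2)$\Rightarrow$(1) in fact never uses local compactness. Everything else is routine: the approximation in (2)$\Rightarrow$(1) is the standard density of simple cylinder functions, and the uniform mass bound $\mu_n(\Sigma)\le1$ controls the error independently of $n$.
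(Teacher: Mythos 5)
Your proof is correct and follows essentially the same route as the paper's: the forward direction tests vague convergence against $1_C\in C_c(\Sigma)$ using compactness of cylinders, and the converse approximates $f\in C_c(\Sigma)$ uniformly within $\epsilon$ by a finite linear combination $\sum_i f(x_i)1_{C_i}$ over the finitely many length-$n$ cylinders meeting the support, then passes to the limit. The only (harmless) cosmetic difference is that you extract the finiteness of the covering family directly from compactness of $\mathrm{supp}(f)$, whereas the paper first encloses the support in finitely many length-one cylinders and invokes local compactness there; your closing observation that the converse direction does not actually need local compactness is accurate.
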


\begin{proof}
First note that if $\Sigma$ is locally compact then every cylinder is a compact set (see \cite[Observation 7.2.3]{ki}). Assume that  $(\mu_n)_n$ converges in the vague topology to $\mu$. Let $C \in \Sigma$ be a cylinder set, then the characteristic function of $C$, denoted by $1_C$ belongs to $C_c(\Sigma)$. Thus,
\begin{equation*}
\lim_{n \to \infty} \int 1_C d \mu_n = \int 1_C d \mu.
\end{equation*}
Therefore, the sequence $(\mu_n)_n$ converges on cylinders to $\mu$.

Suppose now that $(\mu_n)_n$ converges on cylinders to $\mu$ and let $f\in C_c(\Sigma)$. We will prove that $\lim_{n\to\infty}\int fd\mu_n=\int fd\mu$. The function $f$ is uniformly continuous, in particular for every $\epsilon>0$ there exists $n=n(\epsilon)\in\N$ such that $var_n(f)\le \epsilon$.  Since the support of $f$ is a compact set there exists  $M \in \N$ such that  it  is contained in $\bigcup_{i=1}^M[i]$.
By the locally compactness of $\Sigma$  there are finitely many (non-empty) cylinders of length $n$, that we denote by  $(C_i)_{i=1}^q$,  intersecting $\bigcup_{i=1}^M[i]$.  Note that if a cylinder of length $n$ intersects a cylinder of length one then it is contained in it, therefore $\bigcup_{i=1}^q C_i = \bigcup_{i=1}^M[i]$. We now define a locally constant function $\tilde{f}:\Sigma\to\R$ that approximates $f$. For every $i \in \{1,\dots, q\}$  choose a point $x_i\in C_i$ and let $\tilde{f}: \Sigma \to \R$ be the function defined by
\begin{equation*}
\tilde{f}(x):=
\begin{cases}
f(x_i) & \text{ if } x \in C_i, \text{ for }  i \in \{1,\dots, q\}; \\
0 & \text{ if } x \notin \bigcup_{i=1}^q C_i.
\end{cases}
\end{equation*}
By construction the function $\tilde{f}$ is locally constant depending  only on  the first $n$ coordinates, thus $var_n(\tilde{f})=0$. Moreover, it is zero on the complement of the set $\bigcup_{i=1}^M[i]$. In particular $\tilde{f}=\sum_{i=1}^q a_i 1_{C_i}$, for some sequence of real numbers $(a_i)_{i=1}^q$. By the definition of the topology of convergence on cylinders we know that $\lim_{m\to\infty}\int \tilde{f}d\mu_m=\int \tilde{f}d\mu$. Moreover, it follows from our construction that 
\begin{equation*}
\| f- \tilde{f} \|=\sup_{x\in\Sigma}|f(x)-\tilde{f}|\le \epsilon.
\end{equation*}
 The construction of $\tilde{f}$ can be made for every $\epsilon>0$. In particular we can construct a sequence $(\tilde{f}_k)_k$ such that $\| f-\tilde{f}_k \| \le 1/k$, and $\lim_{m\to \infty}\tilde{f}_kd\mu_m=\int \tilde{f}_kd\mu$. This immediately implies that $\lim_{m\to\infty}\int fd\mu_m=\int fd\mu,$ which completes the proof.
\end{proof}

\subsection{The space of test functions for the topology of convergence on cylinders}

The space of test functions for the weak* topology is $C_b(\Sigma)$. Similarly, the space of test functions for the vague topology is $C_c(\Sigma)$. For duality reasons it is actually convenient to have a Banach space as the space of test functions. For the vague topology this is not a serious issue, we can simply consider the closure of $C_c(\Sigma)$ in $C_b(\Sigma)$; this gives us the space $C_0(\Sigma)$ of functions that \emph{vanish at infinity}. More precisely, a function  $ f \in C_0(\Sigma)$ if it is a continuous functions such that for every $\epsilon >0$ there exists a compact set $K \subset \Sigma$ such that for every $x \in \Sigma \setminus K$ we have $|f(x)| < \epsilon$. 

It is a natural question to determine what is the space of test functions for the topology of convergence on cylinders. More precisely, determine a Banach space $V$ such that $(\mu_n)_n$ converges on cylinders to $\mu$ if and only if $\lim_{n \to \infty} \int f d\mu_n =
\int f d\mu$ for every $f \in V$.   From the definition of the topology of convergence on cylinders (Definition  \ref{def:topo.cyl}) it is clear that  the space $V$ should contain the characteristic function of a cylinder and finite linear combination of those. Define 
\begin{equation*}
H:=\left\{f\in C_b(\Sigma):f=\sum_{i=1}^n a_i1_{C_i}, \text{ where }a_i \in \R \text{ and  } C_i \text{ is a cylinder for each } i \right\}.
\end{equation*}
As with the vague topology, our space of test functions will be the closure of $H$ in $C_b(\Sigma)$, that we denote by $\bar{H}$. The following is direct from the definition of the topology of convergence on cylinders.

\begin{lemma}
Let $(\Sigma,\sigma)$ be a countable Markov shift and $\mu , (\mu_n)_n \subset \M_{\le 1}(\Sigma)$ then  $(\mu_n)_n$ converges on  cylinders to the measure $\mu$ if and only if for every $f \in \bar{H}$ we have
\begin{equation*}
\lim_{n \to \infty} \int f d \mu_n = \int f d \mu.
\end{equation*}
\end{lemma}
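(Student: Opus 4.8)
The plan is to prove both implications directly from the definitions, using the fact that $\bar{H}$ is by construction the closure of the span $H$ of cylinder indicator functions inside the Banach space $C_b(\Sigma)$. The nontrivial content is entirely contained in passing from convergence on cylinders to convergence of integrals against arbitrary $f \in \bar{H}$; the reverse implication is immediate.

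First I would prove the reverse direction. Assume that $\lim_{n\to\infty}\int f\,d\mu_n = \int f\,d\mu$ for every $f\in\bar{H}$. Since each cylinder $C\subset\Sigma$ has indicator $1_C\in H\subset\bar{H}$, taking $f=1_C$ gives immediately $\lim_{n\to\infty}\mu_n(C)=\mu(C)$ for every cylinder $C$; by Definition \ref{def:topo.cyl} this says precisely that $(\mu_n)_n$ converges on cylinders to $\mu$.

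For the forward direction, suppose $(\mu_n)_n$ converges on cylinders to $\mu$. By linearity of the integral, convergence of $\int f\,d\mu_n$ to $\int f\,d\mu$ holds for every $f=\sum_{i=1}^k a_i 1_{C_i}\in H$, since each summand converges by hypothesis and there are finitely many of them. Now fix an arbitrary $f\in\bar{H}$ and let $\epsilon>0$. By definition of $\bar{H}$ as the closure of $H$ in the $C^0$-norm, there exists $g\in H$ with $\|f-g\|\le\epsilon$. Because every $\nu\in\M_{\le 1}(\Sigma)$ has total mass at most one, the standard estimate $\left|\int h\,d\nu\right|\le \|h\|\,\nu(\Sigma)\le\|h\|$ gives $\left|\int f\,d\nu - \int g\,d\nu\right|\le\|f-g\|\le\epsilon$ for every such $\nu$, in particular for each $\mu_n$ and for $\mu$. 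Combining the triangle inequality with the already-established convergence of $\int g\,d\mu_n$ to $\int g\,d\mu$, I obtain
\[
\limsup_{n\to\infty}\left|\int f\,d\mu_n - \int f\,d\mu\right| \le 2\epsilon.
\]
Since $\epsilon>0$ was arbitrary, this shows $\lim_{n\to\infty}\int f\,d\mu_n=\int f\,d\mu$, completing the proof.

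The only point requiring care—and the step I expect to be the main, if minor, obstacle—is the uniform control of the error term across the whole sequence $(\mu_n)_n$ and the limit $\mu$ simultaneously. This is what forces the restriction to sub-probability measures: the bound $\left|\int(f-g)\,d\nu\right|\le\|f-g\|$ relies crucially on the uniform mass bound $\nu(\Sigma)\le 1$, so that a single $g\in H$ approximating $f$ works uniformly in $n$. Without this uniform bound on total mass the standard density-of-$H$-in-$\bar{H}$ argument would not close, but here it is guaranteed since we work throughout in $\M_{\le 1}(\Sigma)$.
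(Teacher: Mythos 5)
Your proof is correct and is exactly the routine argument the paper has in mind: the paper states this lemma without proof, remarking only that it is ``direct from the definition,'' and the intended justification is precisely your density argument (linearity on $H$, then sup-norm approximation using the uniform mass bound $\nu(\Sigma)\le 1$). Your closing remark correctly identifies why the restriction to $\M_{\le 1}(\Sigma)$ is what makes the approximation step uniform in $n$.
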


In what follows we will characterize the space $\bar{H}$, in order to do so we will require the following notions.

\begin{definition}
Let $(\Sigma, \sigma)$ be a countable Markov shift and $f:\Sigma \to \R$ a function. If $C$ is a cylinder of length $m$, denote by \begin{equation*}
 C(\ge n):= \left\{ x \in C : \sigma^m(x)\in \bigcup_{k\ge n}[k]  \right\}. 
\end{equation*} 
For a non-empty set $ A\subset \Sigma$ we define 
\begin{equation*}
var^A(f):=\sup  \left\{   \left|f(x)-f(y) \right| : (x,y) \in A\times A \right\}.
\end{equation*}
We declare $var^A(f)=0$ if $A$ is the empty set. 
\end{definition}

\begin{lemma} A function $f\in C_b(\Sigma)$ belongs to $\bar{H}$  if and only if the following three conditions hold:
\begin{enumerate}
\item \label{a} $f$ is uniformly continuous. 
\item \label{b} $\lim_{n\to\infty}\sup_{x\in [n]}|f(x)|=0.$
\item \label{c} $\lim_{n\to\infty}var^{ C(\ge n)}(f)=0,$   for every  cylinder $C\subset \Sigma$. 
\end{enumerate}
Moreover, if $\Sigma$ is locally compact, then $\bar{H}$ coincides with $C_0(\Sigma)$, the space of functions that vanish at infinity. 
\end{lemma}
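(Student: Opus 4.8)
The plan is to introduce the set $\mathcal V$ of all $f\in C_b(\Sigma)$ satisfying \eqref{a}--\eqref{c}, prove $\bar H=\mathcal V$ by establishing both inclusions, and then deduce the locally compact statement from this equivalence. First I would prove $\bar H\subseteq\mathcal V$ by showing that $\mathcal V$ is a \emph{closed} linear subspace of $C_b(\Sigma)$ that contains $H$; since $\bar H$ is the smallest closed subspace containing $H$, this suffices. The containment $H\subseteq\mathcal V$ is a direct check on a single indicator $1_C$ and then on finite linear combinations: $1_C$ is locally constant, hence uniformly continuous, giving \eqref{a}; it vanishes on $[n]$ once $n$ exceeds the first symbol of $C$, giving \eqref{b}; and for any cylinder $D$ of length $m$ it is constant on $D(\ge n)$ for all large $n$ (either $C$ is determined by the first $m$ coordinates and $1_C$ is already constant on $D$, or $C$ prescribes a value at coordinate $m+1$ that $D(\ge n)$ eventually excludes), giving \eqref{c}. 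Closedness rests on the elementary estimates $\sup_{x\in[n]}|f(x)|\le \sup_{x\in[n]}|f_k(x)|+\|f-f_k\|$ and $var^{A}(f)\le var^{A}(f_k)+2\|f-f_k\|$ for every set $A$, together with the standard fact that a uniform limit of uniformly continuous functions is uniformly continuous.

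The substance of the lemma, and the step I expect to be the main obstacle, is the reverse inclusion $\mathcal V\subseteq\bar H$: given $f\in\mathcal V$ and $\epsilon>0$ I must produce $g\in H$ with $\|f-g\|\le\epsilon$. The idea is an inductive cut-off performed on a finite subtree of the tree of cylinders. Using \eqref{a} I first fix $N$ with $var_N(f)\le\epsilon$, so $f$ oscillates by at most $\epsilon$ on every cylinder of length $N$; this $N$ will bound the depth of the tree. At the root, \eqref{b} supplies $M_1$ with $\sup_{x\in[a_1]}|f(x)|\le\epsilon$ for $a_1>M_1$, so I assign value $0$ on those cylinders and keep only the finitely many $[a_1]$ with $a_1\le M_1$. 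At a kept node $D=[a_1\cdots a_k]$ with $k<N$, condition \eqref{c} applied to the cylinder $D$ yields $M_{k+1}=M_{k+1}(D)$ with $var^{D(\ge M_{k+1})}(f)\le\epsilon$; I finalize the tail $D(\ge M_{k+1})$ as a leaf carrying the constant value $f(x_0)$ for a chosen $x_0\in D(\ge M_{k+1})$, and keep the finitely many children $[a_1\cdots a_k a_{k+1}]$ with $a_{k+1}<M_{k+1}$. At depth $N$ every kept node is a cylinder of length $N$ and becomes a leaf with value $f$ at a chosen point.

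Because each node retains only finitely many children and the depth is $N$, the tree is finite and its leaves partition $\Sigma$. The decisive point is that each leaf's indicator lies in $H$: a tail leaf satisfies $1_{D(\ge M)}=1_D-\sum_{j<M}1_{[a_1\cdots a_k j]}$, a finite combination, while the depth-$N$ leaves are themselves cylinders (and the discarded root region carries value $0$, so it contributes no term). Hence $g=\sum_{\text{leaves }L}c_L\,1_L\in H$, and on each leaf one has $|f-c_L|\le\epsilon$ — by \eqref{c} on the tail leaves, by \eqref{a} on the depth-$N$ leaves, and by \eqref{b} on the discarded root cylinders — so $\|f-g\|\le\epsilon$ and $f\in\bar H$. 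I would stress that both \eqref{b} and \eqref{c} are genuinely needed here: \eqref{b} forces the root-level constant to be $0$ (a nonzero constant on an infinite union of cylinders is not expressible in $H$), while \eqref{c} only yields a near-constant, nonzero value at deeper nodes.

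Finally, for the locally compact statement I would invoke the equivalence just proved. Since every cylinder is compact in the locally compact case (\cite[Observation 7.2.3]{ki}), we have $H\subseteq C_c(\Sigma)\subseteq C_0(\Sigma)$, and as $C_0(\Sigma)$ is closed this gives $\bar H\subseteq C_0(\Sigma)$. For the converse it is enough, by density of $C_c(\Sigma)$ in $C_0(\Sigma)$ and closedness of $\bar H$, to verify $C_c(\Sigma)\subseteq\bar H=\mathcal V$, i.e.\ that any $f\in C_c(\Sigma)$ satisfies \eqref{a}--\eqref{c}. Uniform continuity of a compactly supported continuous function gives \eqref{a}; the support, being compact, is covered by finitely many $[i]$, giving \eqref{b}; and since local compactness forces each cylinder to admit only finitely many one-step extensions, $D(\ge n)=\emptyset$ for all large $n$, whence $var^{D(\ge n)}(f)=0$ eventually and \eqref{c} holds. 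Combining the two inclusions yields $\bar H=C_0(\Sigma)$.
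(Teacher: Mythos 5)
Your proof is correct, and while the overall skeleton matches the paper's (one inclusion via the observation that the set of functions satisfying \eqref{a}--\eqref{c} is a closed subspace of $C_b(\Sigma)$ containing $H$, the other via an approximation built on a finite tree of cylinders with cut-offs supplied inductively by \eqref{b} and \eqref{c}), your treatment of the hard inclusion is genuinely simpler at its key step. The paper introduces, for each cylinder $C$, the limit value $l_f(C)$ of $f$ along the tail $C(\ge n)$ (well defined by \eqref{c}), first proves the approximation under the extra hypothesis that $l_f(C)=0$ for all cylinders in its finite tree --- in which case every tail can be assigned the value $0$ --- and then reduces the general case to this one by an inductive subtraction of functions $h_k\in H$ built from these limit values, verifying at each stage that the new limit values vanish. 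You bypass the reduction entirely by assigning to each tail leaf $D(\ge M)$ the sampled value $f(x_0)$ for a chosen $x_0$ in the tail, using \eqref{c} only to guarantee $|f-f(x_0)|\le\epsilon$ there and the identity $1_{D(\ge M)}=1_D-\sum_{j<M}1_{[a_1\cdots a_k j]}$ to see that this leaf still contributes an element of $H$; this buys a one-pass construction at no cost in generality. (The two constructions are otherwise the same: your discarded root region, where \eqref{b} forces the value $0$ because a nonzero constant on an infinite union of cylinders is not in $H$, plays exactly the role of the paper's condition at level one.) For the locally compact statement you also take a slightly different route --- sandwiching via $H\subseteq C_c(\Sigma)\subseteq C_0(\Sigma)$ and the density of $C_c(\Sigma)$ in $C_0(\Sigma)$ --- whereas the paper checks directly that \eqref{c} becomes vacuous and that \eqref{b} characterizes $C_0(\Sigma)$ and implies \eqref{a}; both are sound, and yours leans on the definition of $C_0(\Sigma)$ as the closure of $C_c(\Sigma)$, which is exactly how the paper introduces that space.
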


\begin{proof}  Denote by $H_0$ the space of bounded functions satisfying conditions \eqref{a}, \eqref{b} and \eqref{c}.  The space $H_0$ is a closed subset of $C_b(\Sigma)$. The inclusion $H\subset H_0$ follows directly from the definition of $H$. Since $H_0$ is closed we obtain  that $\bar{H}\subset H_0$. 

We will now prove that $H_0\subset \bar{H}$. Fix $f\in H_0$ and  $\epsilon>0$. We will construct a function $g\in H$ such that $ \| f-g \|<\epsilon$. This would imply that $f \in \bar{H}$.

Since $f$ is uniformly continuous, there exists $q\in\N$ such that $var_{q}(f)<\epsilon$. Let $n_1\in\N$ be such that $\sup_{x\in [m]}|f(x)|<\epsilon,$ whenever $m> n_1$. Choose $n_2\in \N$ such that $var^{[i](\ge m)}(f)<\epsilon$, whenever $i\in \{1,...,n_1\}$ and $m> n_2$.  Similarly, choose $n_3\in \N$ such that $var^{[ij](\ge m)}(f)<\epsilon$, whenever $(i,j)\in\prod_{s=1}^2\{1,...,n_s\}$  and $m>n_3$. Inductively, we obtain a sequence $\{n_1,...,n_q,n_{q+1}\}$ such that for every $k\in \{1,...,q\}$ we have $var^{[i_1,...,i_k] (\ge m)}(f)<\epsilon$, whenever $(i_1,...,i_k)\in \prod_{s=1}^k\{1,...,n_s\}$ and $m>n_{k+1}$.

Let $f^* \in H_0$    and $C$ a non-empty cylinder of length $q$. We will define a number that depends on $f^*$ and $C$, which we denote by $l_{f^*}(C)$, as follows. Let us first assume that $C(\ge n)=\emptyset$, for some $n\in \N$. In this case we define $l_{f^*}(C)=0$. Now assume there exists a strictly increasing sequence $(n_k)_k\subset \N$ and points $x_k\in C\cap \sigma^{-q}[n_k]$. In this case we define $l_{f^*}(C):=\lim_{k\to\infty} f^*(x_k)$. It follows from  condition \eqref{c}  that $l_{f^*}(C)$ is well defined: it is independent of the sequences $(n_k)_k$ and $(x_k)_k$.

  A point $(a_1,...,a_k)\in  \prod_{s=1}^k\{1,...,n_s\}$ defines the  cylinder $[a_1,...,a_k]$. Collect all the non-empty cylinders that arise in this way and call this set $\Omega_k$. Define $\Omega=\bigcup_{k=1}^q\Omega_k$.  

Let us first prove the result when $l_f(C)=0$ holds for every $C\in \Omega$. By our choice of $\{n_1,...,n_{q+1}\}$ and the assumption $l_f(C)=0$, we know that for every $k\in \{1,...,q\}$ we have 
\begin{align}\label{+}\sup_{x\in [i_1,...,i_k](\ge m)}|f(x)|<\epsilon,\end{align}
 whenever $(i_1,...,i_k)\in \prod_{s=1}^k\{1,...,n_s\}$,  and $m>n_{k+1}$ (for consistency define the supremum over the empty set as zero). For every $C\in \Omega_q$ choose a point $x_C\in C$. Define $g:=\sum_{C\in \Omega_q}f(x_C)1_{C}$. Observe that if $x\in \bigcup_{C\in\Omega_q}C$, then $|f(x)-g(x)|<\epsilon$ (recall that $var_q(f)<\epsilon$). If $x$ does not belong to  $ \bigcup_{C\in\Omega_q}C$, then $x$ belongs to a cylinder of the form $[i_1,...,i_k,b]$, where $(i_1,...,i_k)\in  \prod_{s=1}^k\{1,...,n_s\}$ and $b>n_{k+1}$ (if $k=0$, then $x\in [b]$ where $b>n_1)$. By (\ref{+}) and our choice of $n_1$ we obtain that $|f(x)|< \epsilon.$ We therefore have $\|f-g \|<\epsilon$.

We will now explain how to reduce the general case to the situation where $l_f(C)=0$ holds for every $C\in \Omega$. 

Define $h_1:=\sum_{C\in \Omega_1}l_f(C)1_{C}$ and $f_1:=f-h_1$. We claim that $l_{f_1}(C)=0$, for every $C\in \Omega_1$. First suppose that there exists $n$ such that $C(\ge n)=\emptyset$. In this case, by definition, we have that $l_{f_1}(C)=0$. It remains to consider the non-trivial case in the definition of $l_{f_1}(C)$.  Let $(n_k)_k\subset \N$ be a strictly increasing sequence and $(x_k)_k$ points in $\Sigma$ such that $x_k\in C\cap \sigma^{-1}[n_k]$. Then
\begin{equation*}
l_{f_1}(C)=\lim_{k\to\infty}f_1(x_k)=\lim_{k\to\infty} \left(f(x_k)-h_1(x_k)\right) =l_f(C)-\lim_{k\to\infty}h_1(x_k).
\end{equation*}
Since $h_1=\sum_{C\in\Omega_1}l_f(C)1_{C},$ we know that $h_1(x_k)=l_f(C)$ (observe that all cylinders in $\Omega_1$ have length $1$). In particular $l_{f_1}(C)=0$. 

Now define $h_2:=\sum_{C\in \Omega_{2}}l_{f_1}(C)1_C$, and $f_2:=f_1-h_2$. We claim that $l_{f_2}(C)=0$, for every $C\in \Omega_{1}\cup \Omega_2$. As before, first suppose that there exists $n$ such that $C(\ge n)=\emptyset$. In this case, by definition, we have that $l_{f_2}(C)=0$. It remains to consider the non-trivial case in the definition of $l_{f_2}(C)$.  Let $C_1\in \Omega_1$ and $C_2\in \Omega_{2}$.  Choose a strictly increasing sequence $(n_k)_k\subset \N$ and  points $(x_k)_k$ in $\Sigma$ such that $x_k\in C_2\cap \sigma^{-2}[n_k]$. Then
\begin{equation*}
l_{f_2}(C_2)=\lim_{k\to\infty}f_2(x_k)=\lim_{k\to\infty} \left(f_1(x_k)-h_2(x_k) \right)=l_{f_1}(C_2)-\lim_{k\to\infty}h_2(x_k).
\end{equation*}
As before $h_2(x_k)=l_{f_1}(C_2)$, because $x_k\in C_2\cap \sigma^{-2}[k]$. We conclude that $l_{f_2}(C_2)=0$. Similarly, choose sequences $(m_k)_k\subset \N$ and $(y_k)_k\subset \Sigma$  such that $y_k\in C_1\cap \sigma^{-1}[m_k]$, then 
\begin{equation*}
l_{f_2}(C_1)=\lim_{k\to\infty}f_2(y_k)=\lim_{k\to\infty} \left( f_1(y_k)-h_2(y_k) \right)=l_{f_1}(C_1)-\lim_{k\to\infty}h_2(y_k).
\end{equation*}
Since $h_2$ is a finite linear combination of indicators of cylinders of length $2$ we obtain that $\lim_{k\to\infty}h_2(y_k)=0$. By construction we have that $l_{f_1}(C_1)=0$. We conclude that $l_{f_2}(C)=0$, for every $C\in \Omega_1\cup \Omega_2$. 

Continue this process and define $(h_k)_{k=1}^q$ and $(f_k)_{k=1}^q$ such that $f_{k}=f_{k-1}-h_{k}$, for every $k\in \{1,...,q\}$ (where we set $f_0=f$). By construction $l_{f_k}(C)=0$ for every cylinder $C\in\bigcup_{i=1}^k \Omega_i$. In particular $l_{f_q}(C)=0$, for every cylinder $C\in\Omega$. Finally, observe that $f_q=f-\sum_{k=1}^{q}h_k$. Since $h:=\sum_{k=1}^qh_k$ is a function in $H,$ it is enough to approximate $f_q$ (we can add back the function $h$ afterwards, $H$ is a vector space).

We will now assume that $\Sigma$ is locally compact. Since $\Sigma$ is locally compact the set $C(\ge m)$ is empty for large enough $m$. In particular condition \eqref{c} is always satisfied. By definition a function $f\in C_b(\Sigma)$ belongs to $C_0(\Sigma)$ if and only if condition \eqref{b} holds. In particular condition \eqref{b} implies condition \eqref{a}. Therefore $\bar{H}$ coincides with $C_0(\Sigma)$.

\end{proof}

By abuse of notation we denote by $C_0(\Sigma)$ the space of test functions for the topology of convergence on cylinders; this is reasonable because $\bar{H}=C_0(\Sigma)$ in the locally compact case. We say that $f$ \emph{vanishes at infinity} if $f\in C_0(\Sigma)$. To summarize, 
a function $f \in C_b(\Sigma)$ vanishes at infinity if it satisfies  conditions \eqref{a}, \eqref{b} and \eqref{c}.
%
%
It follows from the discussion above that the Banach space $C_0(\Sigma)$ is the space of test functions for the topology of convergence on cylinders. In particular,  the map $\mu\mapsto \int fd\mu$ is continuous in $\M_{\le1}(\Sigma,\sigma)$, whenever $f\in C_0(\Sigma)$.

\section{The space of invariant sub-probability measures is compact} \label{sec:sub}

We already noticed in  Lemma \ref{lem:comp} that if $(\Sigma, \sigma)$ is a locally compact transitive countable 
Markov shift, the space of invariant sub-probability measures  $\M_{\le 1}(\Sigma,\sigma)$ is compact with respect to the vague topology. It is a consequence of Lemma \ref{lem_loc_cyl} that  $\M_{\le 1}(\Sigma, \sigma)$ is also compact with respect to the topology of convergence on cylinders. In this section we prove that  the space $\M_{\le 1}(\Sigma, \sigma)$ is compact with respect to the topology of convergence on cylinders for a larger class of transitive  countable Markov shifts, that is, for countable Markov shifts with the $\F-$property (see Definition \ref{def:F}). Our results are also sharp: if $(\Sigma,\sigma)$ does not satisfy the $\F-$property, then there are sequences of periodic measures that converge to a finitely additive measure that is not countably additive (see Proposition \ref{prop:noF}). Our next result states that invariance is preserved under limits provided the limiting object is a countably additive measure. 

\begin{lemma}\label{invariance} Let $(\Sigma, \sigma)$ be a countable Markov shift. Let $(\mu_n)_n$ be a sequence of invariant probability measures converging on cylinders to a sub-probability measure $\mu$. Then $\mu$ is an invariant measure. 
\end{lemma}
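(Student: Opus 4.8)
The plan is to reduce invariance to the single identity $\mu(\sigma^{-1}C)=\mu(C)$ for every cylinder $C$, and to establish that identity in two moves: a one-sided inequality coming from Fatou's lemma, which is then upgraded to an equality by exploiting conservation of total mass under $\sigma^{-1}$. Throughout I use that $\mu$ is, by hypothesis, a genuine (countably additive) sub-probability measure.

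First I would record the one-sided inequality. Fix a cylinder $C=[a_1\cdots a_m]$. Its preimage decomposes as a countable disjoint union of cylinders, $\sigma^{-1}C=\bigsqcup_{k}[k\,a_1\cdots a_m]$, the union running over all symbols $k$ with $[k\,a_1\cdots a_m]$ nonempty. Since each $\mu_n$ is invariant, $\sum_k \mu_n([k\,a_1\cdots a_m])=\mu_n(\sigma^{-1}C)=\mu_n(C)$, and each summand converges, as $n\to\infty$, to $\mu([k\,a_1\cdots a_m])$ by convergence on cylinders. Applying Fatou's lemma to the counting measure in the index $k$ then yields
$$\mu(\sigma^{-1}C)=\sum_k \mu([k\,a_1\cdots a_m])\le \liminf_{n\to\infty}\sum_k\mu_n([k\,a_1\cdots a_m])=\liminf_{n\to\infty}\mu_n(C)=\mu(C).$$
Thus $\mu(\sigma^{-1}C)\le\mu(C)$ for \emph{every} cylinder $C$.

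The crucial step is to promote this to an equality, and this is exactly where the escape of mass could a priori ruin the argument, since Fatou only gives ``$\le$''. The remedy is a global mass-conservation argument carried out one length at a time. For each fixed $m$, the nonempty cylinders of length $m$ form a countable partition of $\Sigma$, and applying $\sigma^{-1}$ to this partition produces a partition of $\sigma^{-1}\Sigma=\Sigma$. Hence $\sum_{|C|=m}\mu(\sigma^{-1}C)=\mu(\Sigma)=\sum_{|C|=m}\mu(C)$, a sum of nonnegative terms with finite total in which each left-hand summand is bounded by the corresponding right-hand summand. Equality of the two sums forces termwise equality, i.e. $\mu(\sigma^{-1}C)=\mu(C)$ for every cylinder $C$ of length $m$; as $m$ is arbitrary, this holds for all cylinders.

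Finally I would pass from cylinders to all Borel sets. The cylinders, together with $\emptyset$ and $\Sigma$, form a $\pi$-system generating the Borel $\sigma$-algebra of $\Sigma$, and $\mu$ and $\mu\circ\sigma^{-1}$ are finite measures of the same total mass $\mu(\Sigma)$ that agree on this $\pi$-system; by the $\pi$-$\lambda$ theorem they agree on all Borel sets, so $\mu$ is $\sigma$-invariant. The only genuinely delicate point is the interchange of limit and infinite sum in the first step, and I expect the main obstacle to be precisely the potential strictness of the Fatou inequality: the entire proof hinges on recovering the reverse inequality \emph{globally}, from the identity $\sigma^{-1}\Sigma=\Sigma$, rather than attempting to control each cylinder in isolation.
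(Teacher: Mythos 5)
Your proof is correct and follows essentially the same route as the paper's: the one-sided inequality $\mu(\sigma^{-1}C)\le\mu(C)$ (your Fatou argument is just a repackaging of the paper's truncation of the countable union at level $M$), followed by the same upgrade to equality by summing over the partition of $\Sigma$ into cylinders of a fixed length. Your explicit $\pi$-$\lambda$ step at the end fills in a reduction the paper only asserts.
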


\begin{proof}  The measure $\mu$ is invariant if it is equal to $\sigma_*\mu:= \mu(\sigma^{-1})$. Note that, in order to prove the invariance, it is enough to prove that $\sigma_*\mu(D)=\mu(D)$, for every cylinder $D$. Observe that if $D$ is a cylinder then $\sigma^{-1}D=\bigcup_{i\ge 1}D_i$, where $(D_i)_i$ is a finite or countable collection of cylinders. Since $\mu_n$ is invariant we have that $\mu_n(D)=\mu_n(\sigma^{-1}D)$. If $\sigma^{-1}D=\bigcup_{i=1}^m D_i$ is a finite union of cylinders we obtain that  
\begin{equation*}
\mu(D)= \lim_{n\to\infty}\mu_n(D)=\lim_{n\to\infty}\mu_n(\sigma^{-1}D)= \lim_{n\to\infty}\mu_n\left(\bigcup_{i=1}^m D_i \right)= \mu(\sigma^{-1}D).
\end{equation*}
If $\sigma^{-1}D$ is union of infinitely many cylinders we have 
\begin{equation*}
\mu(D)=\lim_{n\to\infty}\mu_n(D)=\lim_{n\to\infty}\mu_n(\sigma^{-1}D)\ge \lim_{n\to\infty}\mu_n\left(\bigcup_{i=1}^M D_i\right)=\mu\left(\bigcup_{i=1}^M D_i\right),
\end{equation*}
for every $M \in \N$. We conclude that $\mu(D)\ge \mu(\sigma^{-1}D)$. We therefore proved that for every cylinder $D$ we have $\mu(D)\ge \mu(\sigma^{-1}D)$. Suppose $D$ is a cylinder of length $s$   and enumerate all cylinders of length $s$ by $(E_k)_k$ with $E_1=D$.
Since $\mu(E_k)\ge \mu(\sigma^{-1}E_k)$, for every $k\in \N$, we obtain
\begin{align}\label{1} 
 \mu\left(\bigcup_{k\ge1} E_k \right)\ge \mu\left(\bigcup_{k\ge1}\sigma^{-1}E_k\right).\end{align}  
Observe that $(E_k)_k$ and $(\sigma^{-1}E_k)_k$ are partitions of $\Sigma$, in particular $\Sigma=\bigcup_{k\ge1} E_k=\bigcup_{k\ge1}\sigma^{-1}E_k$. This implies that (\ref{1}) is an equality, therefore $\mu(E_k)=\mu(\sigma^{-1}E_k)$, for every $k\in \N$. In particular we obtained that $\mu(D)=\mu(\sigma^{-1}D)$, as desired. 
\end{proof}

\begin{remark}\label{rem:dense} Recall that if $(\Sigma, \sigma)$ is transitive then the periodic measures are dense in $\M(\Sigma,\sigma)$ with respect to the weak* topology (see Theorem \ref{lem:cs}). It is a consequence of Lemma \ref{equivtop} that the same holds for the topology of convergence on cylinders. In other words, given an invariant probability measure $\mu$, there exists a sequence $(\mu_n)_n$ of periodic measures such that $\lim_{n\to\infty}d(\mu,\mu_n)=0$.
\end{remark}

%

\begin{remark}
Recall that every element of $\M_{\le 1}(\Sigma, \sigma)$ is of the form $\lambda \mu$, where $\mu \in \M(\Sigma, \sigma)$ and
$\lambda \in [0,1]$.
\end{remark}

In order to study the space  $\M_{\le 1}(\Sigma, \sigma)$  we will model it with  a space of functions.

\begin{definition} \label{def:L} Denote by $FinCyl(\Sigma)$ the collection of non-empty finite unions of cylinders in $\Sigma$. Let $M(\Sigma)$ be the space of functions $F:FinCyl(\Sigma)\to [0,1]$ and $L(\Sigma) \subset M(\Sigma)$ the space of functions  satisfying the following conditions.
\begin{enumerate}
\item \label{La} If $C\subset C'$ are cylinders, then $F(C)\le F(C')$.  
\item \label{Lb} $F(\bigcup_{k=1}^n C_k)=\sum_{k=1}^n F(C_k)$, for every (finite) collection of disjoint cylinders $(C_k)_k$.
\end{enumerate}
\end{definition}

\begin{remark}
Observe that  $FinCyl(\Sigma)$ is a countable set. Fix a bijection between $FinCyl(\Sigma)$ and $\N$. This bijection allow us to identify   $M(\Sigma)$ with $[0,1]^\N$.  From now on we consider  $L(\Sigma)$ as a subset of $[0,1]^\N$. We endow $M(\Sigma)$ with the product topology. Observe that $(R_n)_n\subset M(\Sigma)$ converges to $R\in M(\Sigma)$ if and only if 
\begin{equation*}
\lim_{n\to\infty}R_n(D)=R(D),
\end{equation*}
 for every $D\in FinCyl(\Sigma)$. 
\end{remark}

\begin{remark} \label{r:c}
Observe that the metric $d$,   see equation (\ref{metric}), also defines a metric on $L(\Sigma)$. Indeed, if $F,G \in L(\Sigma)$ then
\begin{align} 
d(F,G)=\sum_{n\ge1} \frac{1}{2^n}|F(C_n)-G(C_n)|,
\end{align}
where $(C_n)_n$ is some enumeration of the cylinders on $\Sigma$, is a metric on $L(\Sigma)$. It is important to observe that the topology induced by $d$ on $L(\Sigma)$ is compatible with the product topology on $M(\Sigma)=[0,1]^\N$. Indeed, $\lim_{n\to\infty}d(F_n,F)=0$, if and  only if $\lim_{n\to\infty}F_n(C)=F(C)$, for every cylinder $C$. By condition \eqref{Lb}  in the definition of $L(\Sigma)$ this is equivalent to $\lim_{n\to\infty}F_n(D)=F(D)$, for every $D\in FinCyl(\Sigma)$. In other words, there exists a continuous injective map from the set $L(\Sigma)$, endowed with the topology generated by the metric $d$, into    the space $[0,1]^\N$ endowed with the product topology.
\end{remark} 
 
\begin{lemma} 
The set $L(\Sigma)$ is compact  with respect to the topology induced by $d$. 
\end{lemma}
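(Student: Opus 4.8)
The plan is to show that $L(\Sigma)$ is a closed subset of the compact space $M(\Sigma) = [0,1]^{\N}$, whence compactness follows from the fact that a closed subset of a compact space is compact, together with Remark \ref{r:c} which guarantees that the topology induced by $d$ agrees with the product topology on $L(\Sigma)$. Since $[0,1]^{\N}$ is compact by Tychonoff's theorem and metrizable, it suffices to verify that $L(\Sigma)$ is sequentially closed in the product topology: if $(F_n)_n \subset L(\Sigma)$ converges in the product topology to some $F \in M(\Sigma)$, I must check that $F$ again satisfies conditions \eqref{La} and \eqref{Lb} of Definition \ref{def:L}.

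First I would recall that product-topology convergence $F_n \to F$ means exactly $\lim_{n\to\infty} F_n(D) = F(D)$ for every $D \in FinCyl(\Sigma)$; in particular this holds for every cylinder $C$ and for every finite union of cylinders. For condition \eqref{La}, suppose $C \subset C'$ are cylinders. Each $F_n$ satisfies $F_n(C) \le F_n(C')$, and passing to the limit preserves the (non-strict) inequality, so $F(C) \le F(C')$. For condition \eqref{Lb}, fix a finite collection of pairwise disjoint cylinders $(C_k)_{k=1}^m$; then $\bigcup_{k=1}^m C_k \in FinCyl(\Sigma)$, so
\[
F\Bigl(\bigcup_{k=1}^m C_k\Bigr) = \lim_{n\to\infty} F_n\Bigl(\bigcup_{k=1}^m C_k\Bigr) = \lim_{n\to\infty} \sum_{k=1}^m F_n(C_k) = \sum_{k=1}^m \lim_{n\to\infty} F_n(C_k) = \sum_{k=1}^m F(C_k),
\]
where the second equality uses that each $F_n \in L(\Sigma)$ satisfies \eqref{Lb}, and the interchange of limit and finite sum is elementary. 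This shows $F \in L(\Sigma)$, so $L(\Sigma)$ is closed.

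I expect no serious obstacle here: the argument is a routine verification that the two defining conditions are each preserved under pointwise limits, and the crucial structural input (that product convergence is precisely evaluation-wise convergence on all of $FinCyl(\Sigma)$, and that $d$ induces the same topology) has already been established in the Remark preceding the statement. The only point deserving care is that condition \eqref{Lb} involves only \emph{finite} unions, so no issue of interchanging a limit with an infinite sum arises; had countable additivity been built into the definition of $L(\Sigma)$, closedness could fail, but as stated the finiteness makes the passage to the limit immediate.
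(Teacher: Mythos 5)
Your proposal is correct and follows essentially the same route as the paper: both arguments show that $L(\Sigma)$ is closed in $M(\Sigma)=[0,1]^{\N}$ by passing to the limit in the two defining conditions, and then invoke compactness of $[0,1]^{\N}$ together with the identification of the $d$-topology with the product topology from the preceding remark. Your additional observation that the finiteness of the unions in condition (b) is what makes the limit interchange harmless is a sensible remark, though the paper does not dwell on it.
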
 
 
\begin{proof} 
We will first prove that $L(\Sigma)$ is a closed subset of $M(\Sigma)$. Let $(F_n)_n$ be a sequence of functions in $L(\Sigma)$ that converges to $F\in M(\Sigma)$.  Let $C$ and $D$ be cylinders such that $C\subset D$. Then $F_n(C)\le F_n(D)$, for every $n\in \N$. We conclude that 
\begin{equation*}
F(C)=\lim_{n\to \infty}F_n(C)\le \lim_{n\to\infty}F_n(D)=F(D).
\end{equation*}
Similarly, if  $(C_k)^m_{k=1}$ is a finite collection of disjoint cylinders we have that 
\begin{equation*}
F \left(\bigcup_{k=1}^m C_k \right)=\lim_{n\to\infty} F_n \left(\bigcup_{k=1}^m C_k \right)=\lim_{n\to\infty} \sum_{k=1}^m F_n(C_k)=\sum_{k=1}^m F(C_k).
\end{equation*}
We conclude that $F\in L(\Sigma)$. It follows that $L(\Sigma)$ is a closed subset of $M(\Sigma)$. Since $[0,1]^\N$ is compact, by virtue of Remark \ref{r:c} we have that  $L(\Sigma)$ is compact with the topology induced by $d$. 
\end{proof}

\begin{remark}\label{embedding} Observe that every sub-probability measure on $\Sigma$ can be identified with a unique function $F\in L(\Sigma)$. More precisely, given $\mu\in \M_{\le1}(\Sigma,\sigma)$ we define $F_\mu\in L(\Sigma)$ by $F_\mu(D):=\mu(D)$, for every $D\in FinCyl(\Sigma)$. The map $\mu\mapsto F_\mu$ defines a continuous embedding $\M(\Sigma,\sigma) \hookrightarrow L(\Sigma)$, when we endow $\M(\Sigma,\sigma)$ with the topology of convergence on cylinders. We say that a sequence $(\mu_n)_n\subset \M(\Sigma,\sigma)$ converges to $F\in L(\Sigma)$ if $(F_{\mu_n})_n\subset L(\Sigma)$ converges to $F$. \end{remark}

In light of Remark \ref{embedding}, in order to prove that the space of invariant  sub-probability measures is compact with respect to the cylinder topology it suffices to prove that  $\overline{\M(\Sigma,\sigma)}\subset L(\Sigma)$ consist of invariant sub-probability measures. At this point we will make a further assumption on the countable Markov shifts considered.

\begin{definition} \label{def:F}
A countable Markov shift $(\Sigma, \sigma)$ is said to satisfy the $\F-$\emph{property} if  for every element of the alphabet ${\bf i}$ and natural number $n$, there are only finitely many admissible words of length $n$ starting and ending at ${\bf i}$.
\end{definition}

\begin{remark}
Every countable Markov shift $(\Sigma, \sigma)$ of finite topological entropy and every locally compact countable Markov shift satisfies the $\F-$property.  There also exists infinite entropy non-locally compact countable Markov shifts satisfying  the $\F-$\emph{property}. Indeed, let $(a_n)_n$ be a sequence of positive integers such that 
\begin{equation*}
\lim_{n \to \infty} \frac{1}{n} \log a_n = \infty.
\end{equation*} 
Consider the countable Markov shift defined by a graph made of $a_n$ simple loops of length $n$ which are based at a common vertex and otherwise do not intersect. This system has the desired properties.
\end{remark}

\begin{proposition}\label{extension} Let $(\Sigma,\sigma)$ be a transitive countable Markov shift satisfying the $\F-$property. If $(\mu_n)_n$ is a sequence of periodic measures converging to a function $F\in L(\Sigma)$, then $F$ extends to an invariant sub-probability measure. 
\end{proposition}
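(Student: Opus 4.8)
The plan is to show that $F$, which already lies in $L(\Sigma)$ and is therefore finitely additive and monotone on $FinCyl(\Sigma)$, is in fact \emph{countably additive on cylinders}, and then to invoke Lemma \ref{invariance} for invariance. Concretely, writing $Cj:=[c_1\dots c_m j]$ for the one-step refinements of a cylinder $C=[c_1\dots c_m]$, the only thing missing from a genuine measure is the identity $F(C)=\sum_{j}F(Cj)$ for every $C$. Since $\bigsqcup_{j<N}Cj=C\setminus C(\ge N)\in FinCyl(\Sigma)$, convergence on cylinders gives $\lim_n\mu_n(C(\ge N))=F(C)-\sum_{j<N}F(Cj)$, and letting $N\to\infty$ shows that the ``escaping mass''
\[
e(C):=F(C)-\sum_j F(Cj)=\lim_{N\to\infty}\lim_{n\to\infty}\mu_n(C(\ge N))
\]
is well defined and $\ge 0$ (finite additivity and monotonicity give $F(C)\ge\sum_{j<N}F(Cj)$). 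So everything reduces to proving $e(C)=0$ for every cylinder $C$; this is the heart of the matter and the only place the $\F$-property enters.

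I expect the \emph{main obstacle} to be exactly this estimate, and the idea is to exploit that each $\mu_n$ is carried by a single periodic orbit together with a loop-counting consequence of the $\F$-property. Fix a symbol $a$. By the $\F$-property there are, for each length $\ell$, only finitely many admissible loops based at $a$ of length $\le\ell$; hence these loops involve only finitely many symbols, bounded by some $N^*(a,\ell)<\infty$. Setting $\ell(a,N):=\max\{\ell:N^*(a,\ell)\le N\}$ we have $\ell(a,N)\to\infty$ as $N\to\infty$ (for fixed $\ell$, once $N\ge N^*(a,\ell)$ we get $\ell(a,N)\ge\ell$), and \emph{every} admissible loop based at $a$ that uses a symbol $\ge N$ has length $>\ell(a,N)$. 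Now view a periodic measure $\mu$ as a cyclic word of period $p$ and cut it into excursions between consecutive visits to $a$; these excursions are loops based at $a$ whose lengths sum to $p$. For $N>a$, a visit to $a$ immediately followed by a symbol $\ge N$ starts an excursion that uses a symbol $\ge N$, hence of length $>\ell(a,N)$, and distinct such visits start distinct excursions. Since at most $p/\ell(a,N)$ excursions can have length $>\ell(a,N)$, we obtain the uniform bound $\mu([a](\ge N))\le 1/\ell(a,N)$.

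To pass from a single symbol to a general cylinder $C=[c_1\dots c_m]$, note that $C(\ge N)\subseteq\sigma^{-(m-1)}\big([c_m](\ge N)\big)$, so invariance of $\mu$ gives $\mu(C(\ge N))\le\mu([c_m](\ge N))\le 1/\ell(c_m,N)$ for every periodic $\mu$. Feeding this into the formula for $e(C)$ yields $0\le e(C)\le\lim_{N\to\infty}1/\ell(c_m,N)=0$, so $e(C)=0$ for every cylinder, as required. This is sharp: for the full shift the words $c_1 k$ with $k$ arbitrarily large produce short loops, $\ell(a,N)$ fails to be defined, and the bound breaks down, matching Proposition \ref{prop:noF}.

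It remains to upgrade the countable additivity of $F$ on cylinders to an honest Borel measure. Iterating $e(C)=0$ gives $F(C)=\sum_{D\subseteq C,\ |D|=|C|+L}F(D)$ for every $L$, i.e.\ $F$ is countably additive along the refinement tree, and any countable disjoint cylinder decomposition of a cylinder is respected by $F$. When $\Sigma$ is not locally compact the cylinders do not form a semiring---the complement of a cylinder is an \emph{infinite} union of cylinders---so rather than invoking Hahn--Kolmogorov directly I would build the measure through the outer measure $\mu^\ast(A):=\inf\{\sum_i F(C_i):A\subseteq\bigcup_i C_i,\ C_i\text{ cylinders}\}$. Because any two cylinders are nested or disjoint, every countable cylinder cover can be disjointified and split across any fixed (clopen) cylinder; together with the countable additivity just proved this gives both $\mu^\ast(C)=F(C)$ for cylinders and the Carath\'eodory measurability of all cylinders, so $\mu:=\mu^\ast$ restricted to the Borel $\sigma$-algebra is a measure extending $F$, with $\mu(\Sigma)\le\sum_k F([k])\le1$. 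Finally $\mu_n\to\mu$ on cylinders and each $\mu_n$ is an invariant probability measure, so Lemma \ref{invariance} shows $\mu$ is invariant; thus $\mu\in\M_{\le1}(\Sigma,\sigma)$ extends $F$.
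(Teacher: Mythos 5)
Your reduction of the problem to showing $e(C)=\lim_{N\to\infty}\lim_{n\to\infty}\mu_n(C(\ge N))=0$ is exactly the reduction the paper makes (Lemma \ref{lem:mea}), but your proof of that estimate is genuinely different and arguably cleaner. The paper fixes $C=[a_1\ldots a_m]$, assumes by contradiction that the double limit equals $A>0$, and uses the $\F$-property to produce a threshold $k_0$ depending on $A$ beyond which every occurrence of a block ${\bf a_1\ldots a_m s}$ with $s\ge k_0$ in the periodic word forces $\lfloor 4/A\rfloor+1$ wasted letters, contradicting the letter count. You instead derive an a priori bound that is uniform over all periodic measures: the $\F$-property forces every loop at $c_m$ through a large symbol to be long (length $>\ell(c_m,N)$ with $\ell(c_m,N)\to\infty$), the excursion decomposition of the periodic orbit gives $\mu([c_m](\ge N))\le 1/\ell(c_m,N)$, and shift-invariance transports this to $C(\ge N)\subseteq\sigma^{-(m-1)}\bigl([c_m](\ge N)\bigr)$. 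This is correct (modulo a harmless off-by-one: loops of length $\le\ell(c_m,N)$ use only symbols $\le N$, so the clean statement concerns symbols $>N$), it avoids the contradiction argument, and the uniformity in $n$ makes the passage to the limit immediate. Working with the last symbol $c_m$ plus invariance, rather than with returns to $a_1$ as in the paper, is a genuine simplification of the combinatorial core.

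The gap is in the extension step. Tree additivity $F(C)=\sum_j F(Cj)$, iterated to any fixed depth, does \emph{not} formally imply that ``any countable disjoint cylinder decomposition of a cylinder is respected by $F$,'' yet that stronger statement is precisely what your Carath\'eodory construction needs to conclude $\mu^{\ast}(C)=F(C)$. After reducing an arbitrary cylinder cover to a partition by maximal elements, one must still show $F(C)\le\sum_k F(D_k)$ for a countable partition $C=\bigsqcup_k D_k$ whose pieces have unbounded lengths; the level-$L$ decomposition leaves a residual sum over cylinders not yet absorbed into any $D_k$, and showing that this residual tends to $0$ is a continuity-from-above statement requiring a tightness argument, not a formal consequence of finite additivity plus tree additivity. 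This is exactly the content of the Kolmogorov extension theorem on $\N^{\N}$, which is how the paper concludes: the marginals $\nu_n$ on $\N^{n}$ defined by $F$ are automatically countably additive (nonnegative functions on countable sets), the consistency condition is the identity $e(C)=0$ you proved, and Kolmogorov then produces the Borel measure. Either cite that theorem or supply the tightness argument; as written, the sentence asserting additivity over arbitrary countable cylinder partitions is doing the work of the extension theorem without proof. The remaining points (invariance via Lemma \ref{invariance}, total mass $\le 1$, and the sharpness remark matching Proposition \ref{prop:noF}) are fine.
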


\begin{proof}  We start by proving that $F$ extends to a measure. Fix a cylinder $C=[a_1,...,a_m]$, and denote by $Ck$ the cylinder $[a_1,...,a_m,k]$.  We will need the following lemma.

\begin{lemma}\label{lem:mea}  \begin{align}\label{goal} F(C)=\sum_{k\ge1}F(Ck).\end{align}
\end{lemma}
We assume that $F(C)>0$, otherwise there is nothing to prove (both left and right hand side would be zero). From now on assume that $n$ is sufficiently large so that  $\mu_n(C)>0$. Let $p_n$ be a periodic point associated to $\mu_n$ such that $p_n\in C$.

\begin{proof}[Proof of Lemma \ref{lem:mea}] Observe that 
\begin{equation*}
F(C)-\sum_{s=1}^{k-1}F(Cs)=\lim_{n\to\infty}\left(\mu_n(C)-\sum_{s=1}^{k-1}\mu_n(Cs)\right)=\lim_{n\to\infty}\mu_n \left(\bigcup_{s\ge k}Cs \right),
\end{equation*}
therefore Lemma \ref{lem:mea} is equivalent to prove that $\lim_{k\to\infty} \lim_{n\to\infty}\mu_n(\bigcup_{s\ge k} Cs)=0$. 
We will argue by contradiction and assume that  
\begin{equation*}
\lim_{k\to\infty} \lim_{n\to\infty}\mu_n\left(\bigcup_{s\ge k} Cs \right)=A>0.
\end{equation*}
Observe that  $\big(\lim_{n\to\infty}\mu_n(\bigcup_{s\ge k} Cs)\big)_k$ decreases as $k$ goes to infinity. We obtain that $\lim_{n\to\infty}\mu_n(\bigcup_{s\ge k} Cs)\ge A$, for every $k\in \N$. 

Recall that $C=[a_1,...,a_m]$ and define the set  $Q\subset \N$ by the following rule: $q\in Q$ if and only if ${\bf a_mq}$ is an admissible word. Define a function $p:Q\to\Z$ as follows: $p(i)=k$ if there exists an admissible word starting at $i$ and ending at $a_1$ of length $k+1$, but there is not any such word of length  less or equal to $ k$. The map $p$ is proper, in other words, $p^{-1}([a,b])$ is finite for every $a,b\in \R$.  Indeed, assume by contradiction that $p^{-1}([a,b])$ is infinite, this would imply that $p^{-1}(c)$ is infinite for some $c\in \N$. For each $w\in p^{-1}(c)$ we have an admissible word of length $c+1$ connecting $w$ and $a_1$, this will create an admissible word (with length $m+c+1$) of the form ${\bf a_1...a_mw...a_1}$. This contradicts the fact that $(\Sigma, \sigma)$ satisfies the $\F-$property. We conclude that $p:Q\to\Z$ is proper. 

Choose $k_0\in \N$ such that $p(s)\ge \lfloor\frac{4}{A}\rfloor+1$, for every $s\in Q$ satisfying $s\ge k_0$. Recall that $p_n$ is a periodic point associated to the measure $\mu_n$. We denote the minimal period of $p_n$ by $m+t_n$, and let $[a_1...a_mb_1...b_{t_n}]$ be a neighborhood  of $p_n$. By the definition of $\mu_n$ we know that $\mu_n(Cs)$ is approximately the number of times that the word ${\bf a_1...a_ms}$ appears in ${\bf w_n:= a_1...a_mb_1...b_{t_n}a_1...a_m}$, divided by $m+t_n$. If $s\ge k_0$, then each block ${\bf a_1...a_ms}$ appearing in  ${\bf w_n}$ is contained in a longer block of the form ${\bf a_1...a_msr_1...r_B}$, where $B\ge \lfloor\frac{4}{A}\rfloor+1$, and $r_t\ne a_1$, for all $t\in \{1,...,B\}$. In particular, for $s\ge k_0$, each block of the form ${\bf a_1...a_ms}$  generates $B$ letters that do not contribute to the number of blocks ${\bf a_1...a_m}$ in ${\bf w_n}$. Choose $n_0$ such that $\mu_n(\bigcup_{s\ge k_0} Cs)\ge \frac{A}{2}$, for every $n\ge n_0$.  This implies that the number of blocks of the form ${\bf a_1...a_ms}$, where $s\ge k_0$, in ${\bf w_n}$ is at least $(m+t_n)A/2$. As explained above, each of those blocks  generate a  disjoint block of length $(m+B+1)$. The number of letters used in those disjoint blocks add up to $(m+B+1)(m+t_n)A/2$. Observe that $(m+B+1)A/2>1$, which contradicts that the total number of letters is $(m+t_n)$. We conclude that $A=0$.
\end{proof}

We will now use Kolmogorov's extension theorem to prove that $F$ comes from a measure on $\Sigma$. To each $I_n:=\{1,...,n\}\subset \N$ we associate a measure on $\N^{I_n}$: this is the atomic measure $\nu_n$ that assigns to $[m_1,...,m_n]$ the number  $F([m_1,...,m_n])$. We remark that if ${\bf m_1...m_n}$ is not an admissible word of $\Sigma$, then $F([m_1,...,m_n])=0$. In order to use Kolmogorov's extension theorem and obtain a measure on $\N^\N$ we need to verify the consistency of the family $(\nu_n)_n$, in other words, that $$\nu_n((m_1,...,m_n))=\nu_{n+1}((m_1,...,m_n)\times \N).$$  By definition of the family $(\nu_n)_n$ this is equivalent to the formula $$F(D)=\sum_{k\ge 1}F(Dk),$$ for $D=[m_1,...,m_n]$. Lemma \ref{lem:mea} implies the consistency of $(\nu_n)_n$. It  follows from Kolmogorov's extension theorem that $F$ extends to a measure $\mu$ on the full shift $\N^\N$. Observe that by definition of $F$ the measure $\mu$ is supported on $\Sigma\subset \N^\N$.  The invariance of $\mu$ follows from Lemma  \ref{invariance}.  
\end{proof}


\begin{remark}[Limits are not always measures] \label{nomealim}We now exhibit examples of countable Markov shifts that do not satisfy the $\F-$property for which sequences of measures converge to a function $F$ that can not be extended to a measure. Let $\Sigma=\N^\N$ be the full shift. Consider the periodic point $p_n=\overline{1n}$, and denote by $\mu_n$ the periodic measure associated to $p_n$. Observe that $(\mu_n)_n$ converges to $F\in L(\Sigma),$ where $F$ is given by $F([1])=1/2$, and $F(C)=0$, for any other cylinder $C\subsetneq\Sigma$. In this case it is clear that $F$ does not come from a measure: use the decomposition $[1]=\bigcup_{s\ge1}[1s]$, and the definition of $F$. Equivalently, the formula $F([1])=\sum_{s\ge1}F([1s])$, does not hold. In the full shift we can not expect to always have a measure as the limit  of probability measures (in the topology of convergence on cylinders). Similar examples are easy to construct. For instance consider the countable Markov shift defined by the matrix $M=(M_{ij})$, where $M_{1k}=1=M_{k1}$, for all $k\in \N$, and $M_{ij}=0$ for the remaining entries. In this case the same choice of measures $(\mu_n)_n$ would provide a sequence of invariant probability measures that do not converge to a countably additive measure.
\end{remark}

\begin{proposition}\label{compactness} Let $(\Sigma,\sigma)$ be a transitive countable Markov shift satisfying the $\F-$property. Then any sequence of invariant probability measures $(\mu_n)_n$ has a subsequence  that converges on cylinders to an invariant sub-probability measure. 
\end{proposition}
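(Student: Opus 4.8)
The plan is to combine the structural results already established in this section with a compactness argument. First I would use Remark \ref{rem:dense}, which states that periodic measures are dense in $\M(\Sigma,\sigma)$ with respect to the topology of convergence on cylinders. This density is the crucial bridge: rather than working directly with an arbitrary sequence $(\mu_n)_n$ of invariant probability measures, I would approximate each $\mu_n$ by a periodic measure. More precisely, for each $n$ I would pick a periodic measure $\nu_n$ with $d(\mu_n,\nu_n)<1/n$, so that $(\mu_n)_n$ and $(\nu_n)_n$ have exactly the same accumulation points in $L(\Sigma)$.

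Next I would invoke compactness of $L(\Sigma)$ with respect to the metric $d$ (the preceding lemma). Via the embedding of Remark \ref{embedding}, each invariant probability measure $\mu_n$ is identified with a function $F_{\mu_n}\in L(\Sigma)$, and likewise each periodic measure $\nu_n$ with $F_{\nu_n}\in L(\Sigma)$. Since $L(\Sigma)$ is compact and metrizable, the sequence $(F_{\nu_n})_n$ admits a convergent subsequence $(F_{\nu_{n_k}})_k$ with limit some $F\in L(\Sigma)$.

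Now comes the step where the $\F$-property is essential. The subsequence $(\nu_{n_k})_k$ is a sequence of \emph{periodic} measures converging to $F\in L(\Sigma)$, so Proposition \ref{extension} applies directly: $F$ extends to an invariant sub-probability measure $\mu$. Because $d(\mu_{n_k},\nu_{n_k})\to 0$, the corresponding subsequence $(\mu_{n_k})_k$ also converges on cylinders to the same limiting function, hence to the invariant sub-probability measure $\mu$. This yields the desired convergent subsequence.

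The main obstacle is not in the soft compactness of $L(\Sigma)$ — that part is routine once the embedding is in place — but rather in ensuring that the limiting object is genuinely a countably additive measure rather than merely a finitely additive set function. This is precisely what Proposition \ref{extension} (and the Kolmogorov extension argument behind it, which relies on the $\F$-property through Lemma \ref{lem:mea}) provides, and it is the reason the reduction to \emph{periodic} measures via density is worthwhile: Proposition \ref{extension} is stated for sequences of periodic measures. One should verify carefully that passing from $(\mu_n)_n$ to the nearby periodic measures $(\nu_n)_n$ does not alter the limit, but since $d$ metrizes convergence on cylinders and $d(\mu_n,\nu_n)\to 0$, this is immediate from the triangle inequality.
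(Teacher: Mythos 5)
Your proposal is correct and follows essentially the same route as the paper: compactness of $L(\Sigma)$, reduction to periodic measures via Remark \ref{rem:dense} and the metric $d$, and Proposition \ref{extension} to guarantee the limit is a genuine invariant sub-probability measure. The only (immaterial) difference is the order of operations — the paper extracts the convergent subsequence of $(\mu_n)_n$ first and then approximates by periodic measures, while you approximate first and extract the subsequence from the periodic sequence.
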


\begin{proof}
Since  $\M(\Sigma,\sigma)\subset L(\Sigma)$, by compactness of $L(\Sigma)$ there exists a subsequence $(\mu_{n_k})_k$ converging to a function $F\in L(\Sigma)$. Since the periodic measures are dense in $\M(\Sigma,\sigma)$ (see Remark \ref{rem:dense}) we can find a sequence of periodic measures $(\nu_k)_k$ such that $d(\mu_{n_k},\nu_k)\le \frac{1}{k}$. It follows that $\lim_{k\to\infty}d(\nu_k,F)=0$. We can now use Proposition \ref{extension} and conclude that $F$ corresponds to an invariant  sub-probability measure.
\end{proof}

\begin{remark}\label{rem:compactness} The proof of Proposition \ref{compactness} also implies that $\overline{\M(\Sigma,\sigma)}\subset \M_{\le 1}(\Sigma,\sigma)$. Indeed, if $F\in \overline{\M(\Sigma,\sigma)}$, then we have a sequence of invariant probability measures $(\mu_n)_n$ converging to $F$. As in the proof of Proposition \ref{compactness} we conclude that $F$ can be approximated by periodic measures, and therefore Proposition \ref{extension} implies the result. 
\end{remark}

As mentioned in the introduction, to prove that $\M_{\le1}(\Sigma,\sigma)$ is affine homeomorphic to the Poulsen simplex we need to prove the existence of a sequence of invariant measures that converges on cylinders to the zero measure. In our next result  we obtain such property. We emphasize that if $(\Sigma,\sigma)$ does not satisfy the $\F-$property, then this is not necessarily true (see Example \ref{ex:nozero}).

\begin{lemma}\label{tozero} Let $(\Sigma,\sigma)$ be a transitive countable Markov shift satisfying the $\F-$property. Then there exists a sequence of invariant probability measures converging on cylinders to the zero measure. 
\end{lemma}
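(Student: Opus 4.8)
The goal is to construct a sequence of invariant probability measures escaping to the zero measure, which by definition of convergence on cylinders means I must produce measures whose mass on each fixed cylinder tends to zero. The plan is to build these measures as periodic measures supported on orbits that spend an increasingly large proportion of their time very far out in the alphabet, so that the time spent inside any fixed cylinder becomes negligible.

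First I would fix a base symbol, say $\mathbf{1}$, and use transitivity together with the $\F$-property to locate, for each large $N$, an admissible excursion that leaves $[1]$, travels through high-index symbols, and returns. The key quantitative input is that the $\F$-property forbids infinitely many loops of a given length through a fixed symbol; transitivity guarantees connectivity so that from $\mathbf{1}$ one can reach symbols of arbitrarily large index and come back. The plan is to use these two facts to manufacture, for each $N\in\N$, a periodic orbit $\gamma_N$ whose period $\ell_N$ is large and whose itinerary visits the cylinder $[1]$ (and indeed any fixed cylinder $C$) only a bounded number of times relative to $\ell_N$. Concretely, I would chain together excursions that pass through symbols $\ge N$, so that a fixed cylinder $C$ of length $m$ can only be entered when the orbit passes through the finitely many low-index symbols appearing in $C$; the $\F$-property bounds how often this recurrence can happen in a window of fixed combinatorial type, forcing the fraction of time in $C$ to decay.

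The central estimate I would then establish is that for the periodic measure $\mu_N$ associated to $\gamma_N$ and any fixed cylinder $C$, the value $\mu_N(C)$ equals the frequency with which the defining word of $C$ appears along $\gamma_N$ divided by $\ell_N$, and this frequency is $o(\ell_N)$ as $N\to\infty$. Passing $N\to\infty$ then yields $\lim_{N\to\infty}\mu_N(C)=0$ for every cylinder $C$, which is exactly convergence on cylinders to the zero measure. I expect the main obstacle to be the combinatorial construction guaranteeing that the proportion of time in each fixed cylinder genuinely vanishes: one must simultaneously control \emph{all} cylinders, and the naive construction only kills one cylinder at a time. The resolution should come from arranging the excursions so that each period is dominated in length by a single long detour through high symbols, making the return visits to \emph{any} bounded collection of low symbols sparse.

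An alternative, cleaner route I would keep in reserve is to bypass periodic orbits and instead invoke the compactness already established (Proposition~\ref{compactness}) together with a direct construction: take any sequence of periodic measures $\mu_N$ whose supporting orbits have periods $\ell_N\to\infty$ and whose excursions escape to infinity, extract a convergent subsequence whose limit $F\in L(\Sigma)$ is a genuine invariant sub-probability measure, and then argue that the only candidate limit consistent with the escape is the zero measure, by showing $F(C)=0$ for each cylinder directly from the frequency estimate. Either way, the technical heart is the frequency bound supplied by the $\F$-property, so I would invest the bulk of the argument there and treat the limiting step as routine.
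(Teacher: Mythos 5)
Your overall strategy is the same as the paper's: build periodic measures from orbits dominated by a single long detour through high symbols, close them up using transitivity, and then diagonalize over cylinders. The diagonalization worry you raise at the end (``one must simultaneously control all cylinders'') is actually the easy part: since every cylinder is contained in a length-one cylinder, it suffices to produce, for each $k$, a periodic measure giving mass at most $1/k$ to $\bigcup_{s=1}^{k}[s]$, and then pass to the diagonal sequence. That step is routine and the paper does exactly this.

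The genuine gap is earlier, at the step you describe as ``transitivity guarantees connectivity so that from $\mathbf{1}$ one can reach symbols of arbitrarily large index and come back.'' That is true but insufficient. What you actually need is the existence, for each $k$, of \emph{arbitrarily long} admissible words that begin and end in $\{1,\dots,k\}$ and whose intermediate letters are all $\geq k+1$; without this, every excursion out of the low part returns within some bounded time $N_0$, so any periodic orbit that meets $\{1,\dots,k\}$ gives that set mass at least $1/(N_0+1)$, and your construction cannot drive the mass to zero. (Example \ref{ex:nozero} in the paper --- infinitely many loops of length two through a common vertex --- shows this failure mode is real when the $\F$-property is dropped: every ergodic measure there gives $[1]$ mass at least $1/2$.) Your proposal never supplies the argument for why such long excursions exist; the phrase ``the $\F$-property bounds how often this recurrence can happen in a window of fixed combinatorial type'' is not the mechanism. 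The $\F$-property enters contrapositively: if every such excursion had length at most $N_0$, then each of the infinitely many symbols $n\geq k+1$ would sit on an admissible loop through a fixed low symbol of period at most $2M_0+N_0$, where $M_0$ bounds the length of connecting words among $\{1,\dots,k\}$ given by transitivity; choosing the symbols $n$ so that the resulting words are pairwise distinct produces infinitely many admissible words of some fixed length starting and ending at the same symbol, contradicting the $\F$-property. This pigeonhole argument is the technical heart of the lemma, and it is the piece missing from your write-up.
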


\begin{proof} Fix some natural number $k$. We say that Property $(k)$ holds if there exist arbitrarily long admissible words of the form ${\bf a_1...a_m}$, where $\{a_1, a_m\}\subset \{1,...,k\}$, and $a_i\ge k+1$, for all $i\in \{2,...,m-1\}$. If Property $(k)$ holds we can construct a sequence of periodic measures $(\mu_n^{(k)})_n$ such that $\lim_{n\to\infty}\mu_n^{(k)}(\bigcup_{s=1}^k [s])=0$. First observe that there exists $M_0=M_0(k)$ such that every two letters in $\{1,...,k\}$ can be connected with an admissible word of length less or equal to   $M_0$. By hypothesis for every $n\in\N$ there exists an admissible word ${\bf w_n= a^{(n)}_1...a^{(n)}_{m_n}}$, where $\{a^{(n)}_1, a^{(n)}_{m_n}\}\subset \{1,...,k\}$, and $a^{(n)}_i\ge k+1$, for all $i\in \{2,...,m_n-1\}$, and $m_n\ge n$. We can extend the word ${\bf w_n}$ into an admissible word ${\bf w_n'=a_1^{(n)}...a^{(n)}_{m_n}b^{(n)}_1...b^{(n)}_{s_n}a^{(n)}_1}$, where $s_n\le M_0$. The word ${\bf w_n'}$ can be used to define a periodic orbit, and therefore a periodic measure, say $\mu_n^{(k)}$, on $\Sigma$. Observe that 
\begin{equation*}
\mu_n^{(k)}\left(\bigcup_{s=1}^k[s]\right)\le \dfrac{s_n+2}{s_n+m_n}\le \dfrac{M_0+2}{n}, 
\end{equation*}
which readily implies that $\lim_{n\to\infty}\mu_n^{(k)}(\bigcup_{s=1}^k [s])=0$.

We will now verify that under the hypothesis of  Lemma \ref{tozero} Property $(k)$ holds.  Assume by contradiction that this is not possible, in other words that any such word has length less or equal to   $N_0$. Define 
\begin{equation*}
T:\{n\in\N: n\ge k+1\}\to \N,
\end{equation*}
in the following way: $T(n)=r$, if there exists an admissible word of length $r$ with first letter in $\{1,...,k\}$ and ending at $n$, but there is no such admissible word of length strictly  less to   $r$. Similarly define 
\begin{equation*}
S:\{n\in\N: n\ge k+1\}\to \N,
\end{equation*}
in the following way: $S(n)=r$, if there exists an admissible word of length $r+1$ with first letter $n$ and ending at some letter in $\{1,...,k\}$, but there is not such admissible word of length  less or equal to $r$. By definition of $T$ and $S$ we know that given $n\ge k+1$, there exists an admissible word  ${\bf y_n:= c^{(n)}_1...c^{(n)}_{T(n)-1}nd^{(n)}_1...d^{(n)}_{S(n)}}$, where $\{c^{(n)}_1,d^{(n)}_{S(n)}\}\subset \{1,...,k\}$ and the rest of the letters are strictly larger than $k$. Observe that by assumption we have $T(n)+S(n)\le N_0$, for every $n\ge k+1$. For $n\ge k+1$ define $W(n)$ as the biggest letter in the word ${\bf y_n}$. We can  inductively choose a sequence $(n_t)_t$ such that $W(n_t)<n_{t+1}$, and observe that $({\bf y_{n_t}})_t$ are pairwise distinct. As with the words $({\bf w_n})_n$,  we can extend each ${\bf y_{n}}$ to an admissible word  ${\bf y_{n}':= e^{(n)}_1...e^{(n)}_{s_n}y_n f^{(n)}_1...f^{(n)}_{r_n} }$, where $s_n$ and $r_n$ are less than $ M_0$, and  $ e^{(n)}_1=1= f^{(n)}_{r_n}$. The word ${\bf y_n'}$ defines a periodic point of period $\le 2M_0+N_0$. Since $({\bf y_{n_t}})_t$ are pairwise distinct we found infinitely many periodic points of periods 
less or equal to   $2M_0+N_0$ (starting and ending at $1$), which contradicts that $(\Sigma,\sigma)$ satisfies the $\F-$property. We conclude that Property $(k)$ holds for every $k\in \N$. 

For every $k\in \N$ we obtain a sequence of periodic measures $(\mu_n^{(k)})_n$ such that $\lim_{n\to\infty}\mu_n^{(k)}(\bigcup_{s=1}^k[s])=0$. Let $n_k$ be such that $\mu_{n_k}^{(k)}(\bigcup_{s=1}^k[s])\le \frac{1}{k}$. To simplify notation we define $\nu_k:=\mu_{n_k}^{(k)}$. We claim that $(\nu_k)_k$ converges on cylinders to the zero measure. Observe that for $k\ge m$, we have 
\begin{equation*}
\nu_k([m])\le \nu_k\left(\bigcup_{s=1}^m[s] \right)\le\nu_k \left(\bigcup_{s=1}^k[s] \right)\le \frac{1}{k}.
\end{equation*}
We conclude that $\lim_{k\to\infty}\nu_k([m])=0$. Since $m\in \N$ was arbitrary, and every cylinder $C$ is contained in a cylinder of length one,  we conclude that $(\nu_k)_k$ converges on cylinders to the zero measure. 
\end{proof}

\begin{example}\label{ex:nozero}
We exhibit an example of a countable Markov shift of infinite entropy  not satisfying the $\F-$property, for which there is no sequence of measures converging to zero in the cylinder topology. Let $(\Sigma, \sigma)$ be the countable Markov shift defined by the graph formed by infinitely many loops of length two rooted at a common vertex. That is, the allowed transitions are of the form $1 \rightarrow N$ and  $N \rightarrow 1$ for every $N \in \N$, this example was also considered in Remark \ref{nomealim}. The system has infinite entropy, since it has infinitely many periodic orbits of period two intersecting $[1]$. The frequency of the digit $1$ is at least $1/2$ for every element of $\Sigma$. Therefore, if $\mu$ is an ergodic measure then $\mu([1])\geq 1/2$. Thus, for any sequence of invariant measures $(\mu_n)_n$ we must have that $\liminf_{n \to \infty} \mu_n([1]) \geq 1/2$. In particular, the sequence $(\mu_n)_n$ does not converges  to zero in the cylinder topology.
\end{example}

We can now prove the compactness of the space of sub-probability measures. As explained in the introduction a compactification of the space of invariant probability measures is important for applications (for instance see \cite{itv} and \cite{v}).

\begin{theorem} \label{sub:comp}
If $(\Sigma,\sigma)$ is a transitive countable Markov shift satisfying the $\F-$property. Then the space  $\M_{\le 1}(\Sigma, \sigma)$ is compact with respect to the  topology of convergence on cylinders.
\end{theorem}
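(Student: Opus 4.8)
The plan is to exhibit $\M_{\le 1}(\Sigma,\sigma)$ as a closed subset of the compact space $L(\Sigma)$ and invoke that a closed subset of a compact space is compact. The embedding $\mu\mapsto F_\mu$ of Remark \ref{embedding} extends verbatim from $\M(\Sigma,\sigma)$ to $\M_{\le 1}(\Sigma,\sigma)$, since a sub-probability measure still produces a function in $L(\Sigma)$ with values in $[0,1]$; moreover, by Remark \ref{r:c} this map is a homeomorphism onto its image when $\M_{\le 1}(\Sigma,\sigma)$ carries the topology of convergence on cylinders and $L(\Sigma)$ the topology induced by $d$. Hence it suffices to prove that the image is closed, and the cleanest way to do so is to establish the identity $\M_{\le 1}(\Sigma,\sigma)=\overline{\M(\Sigma,\sigma)}$ inside $L(\Sigma)$: a closure is automatically closed, so compactness of $L(\Sigma)$ then finishes the argument.

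One inclusion is already in hand. Remark \ref{rem:compactness} gives $\overline{\M(\Sigma,\sigma)}\subset \M_{\le 1}(\Sigma,\sigma)$, because any $F$ in the closure is a cylinder-limit of invariant probability measures and therefore, after approximating by periodic measures, extends to an invariant sub-probability measure by Proposition \ref{extension}. The remaining task is the reverse inclusion: every invariant sub-probability measure must be a cylinder-limit of invariant probability measures.

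For this I would use the decomposition, recorded earlier, that every $\mu\in\M_{\le 1}(\Sigma,\sigma)$ has the form $\mu=\lambda\nu$ with $\nu\in\M(\Sigma,\sigma)$ and $\lambda\in[0,1]$. Let $(\eta_k)_k\subset\M(\Sigma,\sigma)$ be the sequence converging on cylinders to the zero measure furnished by Lemma \ref{tozero} (this is precisely where the $\F$-property enters). Put $\rho_k:=\lambda\nu+(1-\lambda)\eta_k$; being a convex combination of invariant probability measures, each $\rho_k$ lies in $\M(\Sigma,\sigma)$. For every cylinder $C$ one has $\rho_k(C)=\lambda\nu(C)+(1-\lambda)\eta_k(C)\to\lambda\nu(C)=\mu(C)$, so $\rho_k\to\mu$ in the topology of convergence on cylinders and thus $\mu\in\overline{\M(\Sigma,\sigma)}$. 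The case $\lambda=0$ is Lemma \ref{tozero} itself and $\lambda=1$ is trivial, so the two inclusions combine to give $\M_{\le 1}(\Sigma,\sigma)=\overline{\M(\Sigma,\sigma)}$, which is compact.

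Since the substantive work — compactness of $L(\Sigma)$, countable additivity of cylinder limits under the $\F$-property (Proposition \ref{extension}), and the existence of an escaping sequence (Lemma \ref{tozero}) — has already been carried out, I expect no genuine obstacle here: the theorem is essentially a synthesis, and the only new ingredient is the elementary convex-combination argument for the reverse inclusion. The one point that warrants care is confirming that the embedding into $L(\Sigma)$ is a homeomorphism onto its image, so that compactness established in $L(\Sigma)$ transfers back to the topology of convergence on cylinders.
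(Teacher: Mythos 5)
Your proposal is correct and follows essentially the same route as the paper: both establish $\M_{\le 1}(\Sigma,\sigma)=\overline{\M(\Sigma,\sigma)}$ inside the compact space $L(\Sigma)$, using Remark \ref{rem:compactness} for one inclusion and the convex combination $\lambda\nu+(1-\lambda)\eta_k$ with the escaping sequence of Lemma \ref{tozero} for the other. Your added care about the embedding into $L(\Sigma)$ being a homeomorphism onto its image is a reasonable explicit check of a point the paper leaves implicit.
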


\begin{proof}
 It is a consequence of  Remark \ref{rem:compactness} that $\overline{\M(\Sigma,\sigma)}\subset \M_{\le1}(\Sigma,\sigma)$. It is enough to prove that $\overline{\M(\Sigma,\sigma)}=\M_{\le1}(\Sigma,\sigma)$. Let $(\mu_n)_n$ be a sequence of invariant probability measures converging on cylinders to the zero measure (see Lemma \ref{tozero}). An element in $\M_{\le1}(\Sigma,\sigma)$ has the form $\lambda \mu$, where $\mu$ is an invariant probability measure and $\lambda\in [0,1]$. Define $\nu_n=\lambda\mu+(1-\lambda)\mu_n$. Observe that $(\nu_n)_n$ convergences on cyliders to $\lambda \mu$. This concludes that $\overline{\M(\Sigma,\sigma)}=\M_{\le1}(\Sigma,\sigma)$, and therefore $\M_{\le1}(\Sigma,\sigma)$ is compact. 
\end{proof}

The idea behind Remark \ref{nomealim} can be used to prove that Theorem \ref{sub:comp} is sharp. We will prove that without the $\F-$property it is possible to construct a sequence of invariant measures that converges on cylinders to a finitely additive measure that is not countably additive. In particular, Theorem \ref{sub:comp} is false without the $\F-$property assumption.

\begin{proposition}\label{prop:noF} Suppose that $(\Sigma,\sigma)$ does not satisfy the $\F-$property. Then there exists a sequence of periodic measures that converges on cylinders to $F\in L(\Sigma)$, where $F$ can not be extended to a measure. 
\end{proposition}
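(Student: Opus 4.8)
The plan is to run a general version of the construction in Remark \ref{nomealim}. Since $(\Sigma,\sigma)$ fails the $\F$-property, there is a symbol $\mathbf{a}$ and a length $n$ (necessarily $n\ge 3$, so $n-1\ge 2$) for which there exist infinitely many admissible words of length $n$ starting and ending at $\mathbf{a}$. Write each such word as $\mathbf{a}\,\mathbf{b_2}\cdots\mathbf{b_{n-1}}\,\mathbf{a}$; it determines a periodic point $\overline{\mathbf{a}\,\mathbf{b_2}\cdots\mathbf{b_{n-1}}}$ of period $n-1$ lying in $[\mathbf{a}]$, and hence an element of $\M_p(\Sigma,\sigma)$. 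Distinct words give distinct tuples $(\mathbf{b_2},\dots,\mathbf{b_{n-1}})$ and thus distinct points; as each period-$(n-1)$ orbit carries finitely many points, I obtain infinitely many distinct periodic measures, which I enumerate as $(\mu_k)_k$ with $\mu_k$ supported on $p_k=\overline{\mathbf{a}\,\mathbf{b_2^{(k)}}\cdots\mathbf{b_{n-1}^{(k)}}}$.

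First I would pass to a subsequence to organize the escape of mass. For each internal coordinate $i\in\{2,\dots,n-1\}$ the sequence $(\mathbf{b_i^{(k)}})_k\subset\N$ admits a subsequence that is either constant or strictly increasing to infinity; applying this finitely many times I get a subsequence (still denoted $(\mu_k)_k$) along which every coordinate is eventually constant, say equal to $\beta_i$, or tends to infinity. Let $U\subset\{2,\dots,n-1\}$ be the set of coordinates tending to infinity; since the tuples are pairwise distinct, $U\neq\emptyset$. For a fixed cylinder $D$ of length $m$, the value $\mu_k(D)$ equals $(n-1)^{-1}$ times the number of cyclic positions $j$ at which the length-$m$ block of $p_k$ equals $D$, and each such indicator stabilizes as $k\to\infty$: a window meeting a coordinate in $U$ eventually fails to match the fixed symbol required there, while a window meeting only the stable coordinates has a fixed match condition. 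Hence $\mu_k(D)$ converges for every cylinder $D$, so $(\mu_k)_k$ converges on cylinders to some $F\in L(\Sigma)$ (using that $L(\Sigma)$ is closed).

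Next I would locate the leak. Let $i_0=\min U$ and set $C:=[\mathbf{a}\,\beta_2\cdots\beta_{i_0-1}]$, the cylinder read off positions $1,\dots,i_0-1$ of $p_k$ (so $C=[\mathbf{a}]$ if $i_0=2$); by minimality of $i_0$ these positions are stable. The block $C$ occurs at position $1$ of $p_k$ for all large $k$, so $F(C)\ge (n-1)^{-1}>0$. The point is that this occurrence is followed by the symbol $\mathbf{b_{i_0}^{(k)}}\to\infty$, so its mass drains into cylinders $Cs$ that run off to infinity. Concretely, for any finite $T\subset\N$ and all $k$ large enough that $\mathbf{b_{i_0}^{(k)}}\notin T$ and that position $1$ realizes $C$, the position-$1$ occurrence contributes $(n-1)^{-1}$ to $\mu_k(C)=\sum_{s\ge1}\mu_k(Cs)$ but nothing to $\mu_k\big(\bigcup_{s\in T}Cs\big)$, whence $\mu_k\big(\bigcup_{s\in T}Cs\big)\le \mu_k(C)-(n-1)^{-1}$. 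Letting $k\to\infty$ gives $\sum_{s\in T}F(Cs)\le F(C)-(n-1)^{-1}$, and taking the supremum over finite $T$ yields $\sum_{s\ge 1}F(Cs)\le F(C)-(n-1)^{-1}<F(C)$. Since $C=\bigcup_{s\ge1}Cs$ is a partition into cylinders, any measure agreeing with $F$ on $FinCyl(\Sigma)$ would force $F(C)=\sum_{s\ge1} F(Cs)$ by countable additivity; this contradiction shows $F$ does not extend to a measure.

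The only genuinely delicate point is the combinatorial bookkeeping of the second step, namely verifying that after the coordinate extraction every $\mu_k(D)$ stabilizes and correctly identifying which occurrences of $C$ survive in the limit; everything else is the elementary escaping-mass mechanism already illustrated in Remark \ref{nomealim}.
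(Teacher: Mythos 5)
Your argument is correct and follows essentially the same route as the paper: both exploit the fact that infinitely many bounded-length loops at a fixed symbol force some coordinate to escape to infinity along a subsequence, so that in the limit a fixed cylinder $C$ satisfies $\sum_{s\ge 1}F(Cs)<F(C)$, contradicting countable additivity. The paper organizes this as an iterated dichotomy (at each refinement level the mass either leaks or concentrates on a single longer cylinder, and stabilization after finitely many steps would contradict the distinctness of the periodic measures), whereas you perform a one-shot coordinatewise extraction in $\N\cup\{\infty\}$ and read off the leak at the first escaping coordinate $i_0$ --- a difference of bookkeeping, not of method.
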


\begin{proof} Since $(\Sigma,\sigma)$ does not satisfy the $\F-$property there exists a symbol ${\bf i}$ and natural number $n$ such that there are infinitely many admissible words of length $n$ that start and end at ${\bf i}$. The set of admissible words of length $k+1$ starting and ending at ${\bf i}$, where the symbol ${\bf i}$ only appears at the beginning and at the end of the word is denoted by $A_k$.  By hypothesis there exists $q\le n$ such that $|A_q|=\infty$. Set $A_q=\{w_k:k\in \N\}$. Observe that each $w_k\in A_q$ defines a periodic measure that we denote by $\mu_k$. Maybe after passing to a subsequence we can assume that $(\mu_k)_k$ converges on cylinders to $F\in L(\Sigma)$. By construction we know that $\mu_k([{\bf i}])=\frac{1}{q}$. Observe that $\mu_k([{\bf i r}])$ is equal to $0$ or $\frac{1}{q}$, for every $k$ and ${\bf r}$. If $\lim_{k\to\infty}\mu_k([{\bf i r}])=0$, for every ${\bf r}$, then $F$ can not come from a measure: $\sum_{r\ge 1}F([{\bf i r}])=0$, but $F([{\bf i}])=\frac{1}{q}$. We assume there exists ${\bf r_1}$ such that $\lim_{k\to\infty}\mu_k([{\bf i r_1}])=\frac{1}{q}$, which is equivalent to say that $\mu_k([{\bf i r_1}])=\frac{1}{q}$, for every $k$ sufficiently large. We can repeat the process and conclude that if $\lim_{k\to\infty}\mu_k([{\bf i r_1 s}])=0$, for every ${\bf s}$, then $F$ does not come from a measure. We can assume that there exists ${\bf r_2}$ such that $\mu_k([{\bf ir_1 r_2}])=\frac{1}{q}$, for $k$ sufficiently large. By repeating this process we obtain that $F$ does not come from a measure or that $\mu_k([{\bf i r_1 ... r_{q-1}}])=\frac{1}{q}$, for $k$ sufficiently large. This last condition is equivalent to say that the sequence $(\mu_k)_k$ stabilizes, which contradicts that the measures are pairwise different.

\end{proof}

In Section \ref{sec:sus} we will be interested in countable Markov shifts that do not necessarily have the $\F-$property. Despite  Proposition \ref{prop:noF} we can regain control by imposing an integrability condition on the sequence of probability measures (see Proposition \ref{prop:F}). This integrability condition will rule out the sequence constructed in Proposition \ref{prop:noF}. 


\section{The Poulsen simplex} \label{sec:pou}

We now prove one of our main results,  in which we characterize the spaces $\M_{\leq 1}(\Sigma, \sigma)$ and $\M(\Sigma, \sigma)$ for  countable Markov shifts satisfying the $\F-$property.

\begin{theorem} \label{thm:po_main}
Let $(\Sigma, \sigma)$ be a transitive countable Markov shift satisfying the $\F-$property. Then  $\M_{\le 1}(\Sigma,\sigma)$ is affine homeomorphic to the Poulsen simplex.
\end{theorem}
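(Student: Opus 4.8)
The plan is to verify directly that $\M_{\le 1}(\Sigma,\sigma)$, with the topology of convergence on cylinders, satisfies the defining properties of a Poulsen simplex, and then to invoke the uniqueness theorem of Lindenstrauss, Olsen and Sternfeld \cite[Theorem 2.3]{los}. Four of the required properties are already available: the space is metrizable (Proposition \ref{metriz}), compact (Theorem \ref{sub:comp}), convex with extreme points exactly $\M_e(\Sigma,\sigma)\cup\{0\}$ (Proposition \ref{lem_ext}), and it plainly has at least two points (any periodic measure together with the zero measure, the former existing by transitivity). It thus remains to check two things: that $\M_{\le 1}(\Sigma,\sigma)$ is a Choquet simplex, and that its extreme points are dense.

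For the simplex property I would use that $\M_{\le 1}(\Sigma,\sigma)$ is the cone, with apex the zero measure, over the Choquet simplex $\M(\Sigma,\sigma)$ (Lemma \ref{lem_inv}): each $\nu\in\M_{\le 1}(\Sigma,\sigma)$ is written uniquely as $\nu=\lambda\mu$ with $\lambda=\nu(\Sigma)$ and $\mu\in\M(\Sigma,\sigma)$ when $\lambda>0$, and $\nu=0$ when $\lambda=0$. To check that every $\nu$ has a unique representing probability measure on $\mathrm{ext}(\M_{\le 1}(\Sigma,\sigma))=\M_e(\Sigma,\sigma)\cup\{0\}$, the case $\lambda=0$ being trivial, I would argue for $\lambda>0$ that any representing measure $P$ satisfies $\int e([j])\,dP(e)=\nu([j])$ for every symbol $j$ (the maps $\nu\mapsto\nu([j])$ being continuous and affine on $\M_{\le 1}(\Sigma,\sigma)$); summing over $j$ and using Tonelli gives $P(\{0\})=1-\lambda$, while rescaling the restriction of $P$ to $\M_e(\Sigma,\sigma)$ yields a probability measure representing $\mu$. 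Uniqueness then follows because $\M(\Sigma,\sigma)$ is a simplex and a Borel measure is determined by its values on cylinders. Hence $\M_{\le 1}(\Sigma,\sigma)$ is a Choquet simplex.

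The heart of the matter is the density of the extreme points. Since $0$ is itself extreme and, by Remark \ref{rem:dense}, every probability measure is a cylinder-limit of periodic (hence ergodic) measures, it suffices to approximate each $\nu=\lambda\mu$ with $0<\lambda<1$ by ergodic measures. A single ergodic measure has total mass $0$ or $1$, so the deficit $1-\lambda$ can only be produced by a partial escape of mass in the limit. Given $\epsilon>0$, I would therefore build one periodic (hence ergodic) orbit that spends a fraction $\approx\lambda$ of its period shadowing a periodic approximation $\mu_p$ of $\mu$ and a fraction $\approx 1-\lambda$ of its period traversing a long ``escaping'' word. Concretely, fixing a level $k$ exceeding every symbol occurring in the finitely many cylinders to be controlled and in the orbit of $p$, Property $(k)$ from the proof of Lemma \ref{tozero} supplies arbitrarily long admissible words whose interior symbols all exceed $k$; using transitivity I would splice $N$ consecutive copies of the $p$-orbit to one such escaping word of length $L$ via bounded connecting words and make the concatenation periodic. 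Choosing $L\approx N\pi(1-\lambda)/\lambda$, with $\pi$ the period of $p$, forces the visible fraction to be $\approx\lambda$, and since the escaping word realizes no controlled cylinder in its interior, a direct counting estimate gives $\rho(C)\approx\lambda\mu_p(C)\approx\lambda\mu(C)$ for every controlled cylinder $C$, whence $d(\rho,\lambda\mu)<\epsilon$.

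The main obstacle is exactly this last construction: realizing an \emph{intermediate} total mass $\lambda\in(0,1)$ as a cylinder-limit of genuinely ergodic measures forces one to encode both the approximation of $\mu$ and a controlled escape of mass inside a single periodic orbit, and to verify the counting estimate uniformly over the finitely many relevant cylinders. Once the density of the extreme points is established, $\M_{\le 1}(\Sigma,\sigma)$ is a metrizable compact convex Choquet simplex, with at least two points, whose extreme points are dense; by \cite[Theorem 2.3]{los} it is therefore affinely homeomorphic to the Poulsen simplex, which completes the proof.
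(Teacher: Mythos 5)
Your proposal is correct in outline and reaches the theorem by the same overall skeleton as the paper (verify that $\M_{\le 1}(\Sigma,\sigma)$ is a metrizable compact convex Choquet simplex with at least two points and dense extreme points, then invoke \cite[Theorem 2.3]{los}), but the step you identify as ``the heart of the matter'' is handled quite differently. You build, for each $\nu=\lambda\mu$ with $0<\lambda<1$, a single periodic orbit by splicing $N$ copies of a periodic approximation of $\mu$ to a long escaping word from Property $(k)$, and you estimate cylinder frequencies directly; this amounts to merging, and partially re-proving by hand, the content of Lemma \ref{tozero} and of the Coud\`ene--Schapira density theorem (Theorem \ref{lem:cs}). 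The construction is sound --- junction contributions are $O(1)$ against a period tending to infinity, and controlling finitely many cylinders suffices for the metric $d$ --- but it is only sketched at the point where all the work lies. The paper avoids this entirely by a two-step reduction: first, $\nu_n:=\lambda\mu+(1-\lambda)\mu_n$, with $(\mu_n)_n$ the sequence from Lemma \ref{tozero} converging on cylinders to the zero measure, is a sequence of invariant \emph{probability} measures with $d(\nu_n,\lambda\mu)\to 0$; second, since on probability measures the cylinder topology coincides with the weak* topology (Lemma \ref{equivtop}), the already-established weak* density of periodic measures (Remark \ref{rem:dense}) yields periodic $\overline{\nu}_n$ with $d(\overline{\nu}_n,\nu_n)\le 1/n$, and the triangle inequality finishes. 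Your ``main obstacle'' thus dissolves once one notices that the intermediate approximants need not themselves be extreme points: it is enough to pass through non-ergodic probability measures and then perturb. Your cone argument for the Choquet simplex property is a reasonable expansion of what the paper merely asserts, and is compatible with Proposition \ref{lem_ext} and Lemma \ref{lem_inv}.
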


\begin{proof}
An element in $\M_{\le1}(\Sigma,\sigma)$ has the form $\lambda \mu$, where $\mu$ is an invariant probability measure and $\lambda\in [0,1]$.  To prove that $\M_{\le1}(\Sigma,\sigma)$ is the Poulsen simplex it is enough to prove that the extreme points of $\M_{\le1}(\Sigma,\sigma)$ are dense (we already know that $\M_{\le 1}(\Sigma,\sigma)$ is a metrizable convex compact Choquet simplex). We will approximate the measure $\lambda\mu$ with periodic measures. As explained in the proof of Theorem \ref{sub:comp}, we can construct invariant probability measures $(\nu_n)_n$ such that $\lim_{n\to\infty}d(\nu_n,\lambda \mu)=0$. By Remark \ref{rem:dense} we can find a sequence $(\overline{\nu}_n)_n$ of periodic measures such that $d(\overline{\nu}_n,\nu_n)\le \frac{1}{n}$. This implies that $(\overline{\nu}_n)_n$ converges on cylinders to $\lambda\mu$, and therefore, by the main result of \cite{los},  $\M_{\le 1}(\Sigma,\sigma)$ is affine homeomorphic to the Poulsen simplex.
\end{proof}

Since the extreme points of $\M_{\le 1}(\Sigma,\sigma)$ are the ergodic probability measures together with the zero measures, it  follows directly from Theorem \ref{thm:po_main} and Lemma \ref{equivtop} that,

\begin{theorem}
Let $(\Sigma, \sigma)$ be a transitive countable Markov shift satisfying the $\F-$property. Then $\M(\Sigma, \sigma)$ is affinely homeomorphic to the Poulsen simplex minus a vertex and all of its convex combinations.
\end{theorem}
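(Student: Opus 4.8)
The plan is to deduce this statement directly from Theorem~\ref{thm:po_main} by transporting the internal structure of $\M_{\le 1}(\Sigma,\sigma)$ across the affine homeomorphism it supplies. Write $P$ for the Poulsen simplex and let $\Phi\colon \M_{\le 1}(\Sigma,\sigma)\to P$ be an affine homeomorphism as in Theorem~\ref{thm:po_main}, where $\M_{\le 1}(\Sigma,\sigma)$ carries the topology of convergence on cylinders. The first step is to record three structural facts. By Proposition~\ref{lem_ext} the extreme points of $\M_{\le 1}(\Sigma,\sigma)$ are the ergodic probability measures together with the zero measure, so the zero measure is a distinguished vertex $v_0$. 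Next I would check that $\M(\Sigma,\sigma)$ is a \emph{face} of $\M_{\le 1}(\Sigma,\sigma)$: if a probability measure $\mu$ equals $t\nu_1+(1-t)\nu_2$ with $\nu_i\in\M_{\le 1}(\Sigma,\sigma)$ and $t\in(0,1)$, then comparing total masses forces $\nu_1(\Sigma)=\nu_2(\Sigma)=1$, hence $\nu_1,\nu_2\in\M(\Sigma,\sigma)$. Finally, every element of $\M_{\le 1}(\Sigma,\sigma)$ has the form $\lambda\mu=\lambda\mu+(1-\lambda)\,v_0$ with $\mu\in\M(\Sigma,\sigma)$ and $\lambda\in[0,1]$, so $\M_{\le 1}(\Sigma,\sigma)$ is the cone over the face $\M(\Sigma,\sigma)$ with apex $v_0$.

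Second, I would transport these facts through $\Phi$. Since $\Phi$ is an affine bijection it carries extreme points to extreme points, faces to faces, and convex combinations to convex combinations; hence $v:=\Phi(v_0)$ is a vertex of $P$, the image $\Phi(\M(\Sigma,\sigma))$ is a face of $P$, and $P$ is the cone over this face with apex $v$. Consequently $P\setminus\Phi(\M(\Sigma,\sigma))$ consists exactly of the vertex $v$ and of all proper convex combinations $\lambda\, q+(1-\lambda)\,v$ with $q\in\Phi(\M(\Sigma,\sigma))$ and $\lambda\in(0,1)$; that is, $\Phi(\M(\Sigma,\sigma))$ is precisely the Poulsen simplex with the vertex $v$ and all of its convex combinations removed. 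Restricting $\Phi$ yields an affine homeomorphism from $\M(\Sigma,\sigma)$, endowed with the topology of convergence on cylinders, onto this set. To conclude, Lemma~\ref{equivtop} identifies the topology of convergence on cylinders on $\M(\Sigma,\sigma)$ with the weak* topology, so $\Phi|_{\M(\Sigma,\sigma)}$ is the desired affine homeomorphism for the weak* topology.

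There is no deep obstacle left once Theorem~\ref{thm:po_main} is in hand; the work is essentially a matter of correctly interpreting the phrase ``Poulsen simplex minus a vertex and all of its convex combinations'' and matching it with the cone decomposition above. The one point that genuinely deserves care is the verification that $\M(\Sigma,\sigma)$ is a bona fide face of $\M_{\le 1}(\Sigma,\sigma)$ and not merely a convex subset, since it is exactly this fact—together with the cone structure with apex at the zero measure—that guarantees the removed set is precisely the vertex $v$ and the segments emanating from it, and nothing more. Everything else is preserved automatically, because $\Phi$ is an affine homeomorphism and because Lemma~\ref{equivtop} reconciles the two topologies on $\M(\Sigma,\sigma)$.
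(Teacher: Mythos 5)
Your proposal is correct and follows essentially the same route as the paper, which simply asserts that the statement follows directly from Theorem~\ref{thm:po_main} and Lemma~\ref{equivtop} once one knows the extreme points of $\M_{\le 1}(\Sigma,\sigma)$ are the ergodic measures together with the zero measure. Your write-up merely makes explicit the face/cone bookkeeping (that $\M(\Sigma,\sigma)$ is a face and $\M_{\le 1}(\Sigma,\sigma)$ is the cone over it with apex the zero measure) that the paper leaves implicit, and that verification is sound.
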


\begin{corollary}
Let $(\Sigma, \sigma)$ be a transitive countable Markov shift satisfying the $\F-$property. Then
the set $\M_e(\Sigma,\sigma)$ is path connected.
\end{corollary}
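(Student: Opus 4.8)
The plan is to transport the question to the extreme boundary of the Poulsen simplex, where the topology of the extreme set is understood. By Theorem \ref{thm:po_main} there is an affine homeomorphism $\Phi\colon \M_{\le 1}(\Sigma,\sigma)\to P$, where $P$ denotes the Poulsen simplex. Since an affine homeomorphism carries extreme points bijectively onto extreme points, $\Phi$ restricts to a homeomorphism between the set of extreme points of $\M_{\le 1}(\Sigma,\sigma)$ and the set $\partial_e P$ of extreme points of $P$. By Proposition \ref{lem_ext} the extreme points of $\M_{\le 1}(\Sigma,\sigma)$ are exactly the ergodic probability measures together with the zero measure. Hence $\Phi$ induces a homeomorphism $\M_e(\Sigma,\sigma)\cup\{0\}\to \partial_e P$ which sends the zero measure to a single distinguished extreme point $e_0:=\Phi(0)$ of $P$.

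First I would recall from \cite{los} that $\partial_e P$ is path connected, which already shows that $\M_e(\Sigma,\sigma)\cup\{0\}$ is path connected. The remaining, and essential, point is to delete the zero measure without destroying path-connectedness, that is, to show that $\partial_e P\setminus\{e_0\}$ is path connected. For this I would invoke the stronger structural result, also due to Lindenstrauss, Olsen and Sternfeld \cite{los}, that $\partial_e P$ is homeomorphic to the separable infinite-dimensional Hilbert space $\ell^2$. Since $\ell^2$ is a topological vector space of dimension at least two, $\ell^2\setminus\{p\}$ is path connected: any two of its points can be joined by a polygonal path that bends around $p$ whenever the straight segment would meet it. Transporting this back through $\Phi$ gives that $\M_e(\Sigma,\sigma)=(\M_e(\Sigma,\sigma)\cup\{0\})\setminus\{0\}$ is path connected, as desired.

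The hard part is precisely the deletion of the zero measure: bare path-connectedness of $\partial_e P$ does not suffice, because a path-connected space may have cut points, and a priori a path joining two ergodic measures could be forced to pass through $0$. This is why the proof must use more than the path-connectedness cited in the introduction, namely the $\ell^2$ model of the extreme boundary (equivalently, the absence of local cut points in the Poulsen boundary). If one wished to avoid citing the $\ell^2$ homeomorphism, an alternative would be a direct detour argument: given a connecting path that meets $0$, one would use the abundance of ergodic measures accumulating on $0$ (Lemma \ref{tozero}) together with the local path-connectedness and homogeneity of the Poulsen boundary to reroute the path around $0$. Making such a reroute rigorous, however, essentially reintroduces the infinite-dimensionality input, so invoking the $\ell^2$ structure is the cleaner route.
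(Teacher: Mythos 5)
Your proof is correct and follows essentially the same route as the paper: transport the problem to the extreme boundary of the Poulsen simplex via Theorem \ref{thm:po_main}, invoke the structural result of Lindenstrauss--Olsen--Sternfeld (the paper uses their Theorem 3.1 identifying the extreme boundary with the pseudo-interior $\{x\in[-1,1]^{\N}:|x_n|<1\}$ of the Hilbert cube, which is homeomorphic to $\ell^2$ by Anderson's theorem, so the two formulations are interchangeable), and observe that deleting the point corresponding to the zero measure from this infinite-dimensional convex set preserves path-connectedness. You also correctly identify the key subtlety, namely that bare path-connectedness of the extreme boundary does not suffice and the stronger topological model is needed to route paths around the deleted vertex.
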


\begin{proof}
The set of extreme points of the Poulsen simplex is path connected \cite[(4) p.101]{los}. It follows from Theorem \ref{thm:po_main}  that the set $\M_e(\Sigma,\sigma)\cup \{0_m\}$, where $0_m$ denotes the zero measure is path connected. Denote by $Q:= [-1,1]^{\N}$ the Hilbert cube and let  $P:= \{ (x_1, x_2, \dots) \in Q : |x_n|<1 , \text{ for every } n \in \N \}$.
It was proved in \cite[Theorem 3.1]{los} that there exists a homeomorphism $h$ between the Hilbert cube $Q$ and the Poulsen simplex   
which maps $P$ onto the set of extreme points of the Poulsen simplex.  Denote by $z=h^{-1}(0_m) \in P$. For any $x,y \in P \setminus \{z\}$ it is clear that there exists
a continuous path $p:[0,1] \to P$ such that $p(0)=x, p(1)=y$ and $p(t) \neq z$ for every $t \in [0,1]$. Therefore the set $\M_e(\Sigma,\sigma)$ is path connected.
\end{proof}

\section{The space of invariant measures for suspension flows}\label{sec:sus}
In this section we study the space of invariant probability measures of a suspension flow defined over a countable Markov shift.

\subsection{Suspension flows}
Let $(\Sigma, \sigma)$ be a countable Markov shift and $\tau:\Sigma \to \R$ a continuous positive function bounded away from zero, that is, there exists $c=c(\tau)>0$ such that $\tau(x)\ge c$, for all $x\in \Sigma$.  Consider the space
\begin{equation*} 
Y= \left\{ (x,t)\in \Sigma  \times \R \colon 0 \le t \le\tau(x) \right\},
\end{equation*}
with the points $(x,\tau(x))$ and $(\sigma(x),0)$ identified for
each $x\in \Sigma $. The \emph{suspension flow} over $\sigma$
with \emph{roof function} $\tau$ is the semi-flow $\Theta = (
\theta_t)_{t \ge 0}$ on $Y$ defined by
\begin{equation*}
 \theta_t(x,s)= (x,s+t), \ \text{whenever $s+t\in[0,\tau(x)]$.}
\end{equation*}
In particular, $\theta_{\tau(x)}(x,0)= (\sigma(x),0)$. The space of invariant probability measures for the shift is related to the space of invariant probability measures for the flow, that we denote by $\M(\Sigma,\sigma,\tau)$. Indeed, it follows from a classical result of Ambrose and Kakutani \cite{ak} (see \cite[Chapter 6]{pp} for details) that,

\begin{lemma}  \label{lem:medi}
Let $(Y, \Theta)$ be a suspension flow over $(\Sigma, \sigma)$ with roof function $\tau$ bounded away from zero. Let
\begin{equation*}
\M_\tau:=\left\{\mu\in \M(\Sigma,\sigma):\int \tau d\mu<\infty \right\}.
\end{equation*}
The map $\phi: \M_{\tau} \to \M(\Sigma,\sigma,\tau)$ defined by
\begin{equation*}
\mu \mapsto \frac{\mu \times Leb}{\int \tau d\mu},
\end{equation*}
where $Leb$ is the one dimensional Lebesgue measure, is a bijection.
\end{lemma}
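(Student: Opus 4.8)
The plan is to establish the three standard properties in turn: that $\phi$ is well-defined (its image lands in $\M(\Sigma,\sigma,\tau)$), that it is injective, and that it is surjective. Throughout write $c=c(\mu):=\int\tau\,d\mu$, which lies in $(0,\infty)$ for $\mu\in\M_\tau$: it is finite by the definition of $\M_\tau$ and positive because $\tau\ge c(\tau)>0$.

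First I would check well-definedness. By Fubini the restriction of $\mu\times Leb$ to $Y$ has total mass $\int_\Sigma\int_0^{\tau(x)}dt\,d\mu(x)=\int\tau\,d\mu=c$, so $\phi(\mu)$ is a probability measure on $Y$. For flow-invariance I would verify $(\theta_s)_*\phi(\mu)=\phi(\mu)$ for all $s\ge0$. The mechanism is that $Leb$ is translation-invariant in the fiber coordinate, which handles the part of the flow that stays inside a fiber $\{x\}\times[0,\tau(x)]$, while the identification $(x,\tau(x))\sim(\sigma(x),0)$ together with the $\sigma$-invariance of $\mu$ handles the ``wrap-around'' contribution. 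Concretely this can be confirmed either by testing against a dense class of functions $f$ on $Y$ or by checking the identity on flow boxes of the form $C\times[0,a]$; in both cases the two invariances combine to give the claim.

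Next, injectivity. Fix any $0<a<c(\tau)$, so that for every cylinder $C\subset\Sigma$ the box $C\times[0,a]$ lies in $Y$ and $\phi(\mu)(C\times[0,a])=\mu(C)\,a/c$. Hence the set function $\nu(C):=a^{-1}\phi(\mu)(C\times[0,a])=\mu(C)/c$ is independent of $a$ and extends (by countable additivity over the disjoint cover $\Sigma=\bigcup_i[i]$) to the measure $\nu=\mu/c$, with total mass $\nu(\Sigma)=1/c$ since $\mu$ is a probability measure. Thus from $\phi(\mu)$ one recovers $c=1/\nu(\Sigma)$ and then $\mu=c\,\nu$, so $\phi(\mu)$ determines $\mu$ and $\phi$ is injective.

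Finally, surjectivity, which I expect to be the main obstacle since it is the substantive content of the Ambrose--Kakutani representation \cite{ak} (see also \cite[Chapter 6]{pp}). Given an arbitrary $m\in\M(\Sigma,\sigma,\tau)$, the point is to show $m$ has the product form $(\mu\times Leb)/c$ for some $\sigma$-invariant $\mu$. I would disintegrate $m$ over the base $\Sigma$ and use flow-invariance to force the conditional measures along the fibers to be multiples of Lebesgue measure: invariance under the fiberwise translations of the semi-flow rigidly constrains the vertical marginals. Concretely, define $\mu$ on cylinders by $\mu(C):=c\cdot\lim_{a\to0^+}a^{-1}m(C\times[0,a])$, where flow-invariance guarantees the limit exists and is finitely additive; extending $\mu$ to a Borel measure on $\Sigma$, its $\sigma$-invariance follows from flow-invariance across the roof identification, and $\int\tau\,d\mu<\infty$ because $m(Y)=1$ forces the total mass to be finite. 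Normalizing so that $\mu$ is a probability measure pins down $c=\int\tau\,d\mu$ and yields $\phi(\mu)=m$. The delicate part is justifying the disintegration and the rigidity of the vertical marginals rigorously, which is exactly where I would invoke the classical Ambrose--Kakutani theorem rather than reprove it from scratch.
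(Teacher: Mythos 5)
Your proposal is correct and is essentially the standard argument: the paper itself gives no proof of this lemma, simply attributing it to the classical Ambrose--Kakutani theorem \cite{ak} (with \cite[Chapter 6]{pp} for details), which is exactly where you defer for the substantive surjectivity step, while your verifications of well-definedness via Fubini and of injectivity via the flow boxes $C\times[0,a]$ (i.e.\ Kac's formula, which the paper also uses later) are routine and sound.
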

We denote  the inverse of $\phi$ by $\psi$.  We will be particularly interested in a special class of roof functions.

 \begin{definition} \label{def:R} A positive function $\tau:\Sigma\to\R$ belongs to the class $\cR$ if the following properties hold: 
\begin{enumerate}
\item \label{Ta} $\tau$ is uniformly continuous, bounded away from zero, and $var_2(\tau)$ is finite,
\item \label{Tb} $$\lim_{k\to \infty}\inf_{x: x_1 \ge k}\tau(x)=\infty,$$
where $x_1$ is the first coordinate of $x$. 
\end{enumerate}
\end{definition}

\begin{remark} The class $\cR$ includes a wealth of interesting examples. For instance, the geodesic flow over the modular surface can be coded as a suspension flow over the full-shift on a countable alphabet, $\Sigma=\N^\N$, with a roof function $\tau$ belonging to $\cR$, see \cite{gk,ku} for details. A large class of examples belonging to the  class $\cR$ is to be found in one-dimensional dynamics. Indeed, the class of Expanding-Markov-Renyi  (EMR) maps is a class of interval maps with infinitely many branches which was introduced by Pollicott and Weiss in \cite{pw} and has been extensively studied. It turns out that if $f$ is an EMR map then  the symbolic version of the corresponding  geometric potential $ \log |f'|$ belongs to $\cR$.  These potentials carry the relevant fractal information of the system as well as the coding of relevant equilibrium measures such as  Sinai-Ruelle-Bowen measures. An example of an EMR map is the Gauss map.
\end{remark}

\subsection{The topology of convergence on cylinders} \label{subsec:ccs}
The space of invariant sub-probability measures of the suspension flow is denoted by $\M_{\le 1}(\Sigma,\sigma,\tau)$. In this section we endow this space with a topology that makes it compact whenever $\tau\in \cR$ (see Theorem \ref{compact}). The topology we consider is an adaptation of the cylinder topology defined in sub-section \ref{sec:cyl}. Let $(Y, \Theta)$ be a suspension flow over $(\Sigma, \sigma)$ with roof function $\tau$ and $c=c(\tau)>0$ such that  $\inf \tau>c$.
\begin{definition} \label{def:ccs} Let $(\nu_n)_n$ and $\nu$ be measures in $\M_{\le 1}(\Sigma,\sigma,\tau)$.  We say that $(\nu_n)_n$ \emph{converges on cylinders} to $\nu$ if $$\lim_{n\to\infty}\nu_n(C\times [0,c])=\nu(C\times[0,c]),$$
for every cylinder $C\subset \Sigma$.
\end{definition}

Recall that by Kac's formula we know that $$\nu(C\times [0,c])=\frac{c\mu(C)}{\int \tau d\mu},$$ whenever $\nu\in \M(\Sigma,\sigma,\tau)$ and $\mu=\psi(\nu)$. 

\begin{remark}\label{rem:tobase} Let $\nu$ and $(\nu_n)_n$ be invariant probability measures for the suspension flow and set $\mu_n=\psi(\nu_n)$, and $\mu=\psi(\nu)$. From the definition of $\psi$  we have that the following statements are equivalent
\begin{enumerate}
\item The sequence $(\nu_n)_n$ converges on cylinders to $\lambda \nu$, where $\lambda\in [0,1]$. 
\item The following limit holds $$\lim_{n\to\infty}\frac{\mu_n(C)}{\int \tau d\mu_n}=\lambda\frac{\mu(C)}{\int \tau d\mu},$$
for every cylinder $C\subset \Sigma$.
\end{enumerate}
Moreover, $\lambda_1\nu_1$ and $\lambda_2\nu_2$ are equal if and only if $$\lambda_1\nu_1(C\times [a,b])=\lambda_2\nu_2(C\times [a,b]),$$ for every cylinder $C\subset \Sigma$ and $a,b\in\R$.  By Kac's formula this is equivalent to 
$$\lambda_1\frac{\mu_1(C)}{\int \tau d\mu_1}=\lambda_2\frac{\mu_2(C)}{\int \tau d\mu_2},$$
for every cylinder $C\subset \Sigma$. 
\end{remark}

\begin{lemma} \label{met}
The topology of the convergence on cylinders in $\M_{\le 1}(\Sigma,\sigma,\tau)$ is metrizable.
\end{lemma}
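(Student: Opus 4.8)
The plan is to mimic the construction of Proposition \ref{metriz}, replacing the test cylinder $C$ by the ``slab'' $C\times[0,c]$, which is the natural test set for the flow according to Definition \ref{def:ccs}. Fix an enumeration $(C_n)_n$ of the cylinders of $\Sigma$ and define
$$d(\nu,\eta)=\sum_{n\ge1}\frac{1}{2^n}\bigl|\nu(C_n\times[0,c])-\eta(C_n\times[0,c])\bigr|.$$
Since each slab measure lies in $[0,1]$ the series converges, and symmetry together with the triangle inequality are inherited termwise from $\R$; so the only genuine point to check is that $d(\nu,\eta)=0$ forces $\nu=\eta$.

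For non-degeneracy, first I would note that $d(\nu,\eta)=0$ is equivalent to $\nu(C\times[0,c])=\eta(C\times[0,c])$ for every cylinder $C$. Writing a sub-probability flow measure as $\lambda\nu'$ with $\nu'\in\M(\Sigma,\sigma,\tau)$ and $\mu=\psi(\nu')$, Kac's formula gives $\lambda\nu'(C\times[0,c])=c\,\lambda\,\mu(C)/\int\tau d\mu$; here I use that $\inf\tau>c$, so that $[0,c]\subseteq[0,\tau(x)]$ for every $x$ and the slab sits genuinely inside $Y$. Hence equality of all slab measures is equivalent to the identity $\lambda_1\mu_1(C)/\int\tau d\mu_1=\lambda_2\mu_2(C)/\int\tau d\mu_2$ for every cylinder $C$, which by Remark \ref{rem:tobase} is exactly the condition for the two flow measures to coincide. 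This yields $d(\nu,\eta)=0\iff\nu=\eta$, so $d$ is a metric.

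Finally I would verify that $d$ induces the topology of convergence on cylinders: $d(\nu_n,\nu)\to0$ if and only if each coordinate $\nu_n(C\times[0,c])\to\nu(C\times[0,c])$ converges, which is the standard tail estimate for a weighted sum over a countable index set (split the series at a large $N$, bound the head by the individual convergences and the tail by $2^{-N}$). Since this is precisely Definition \ref{def:ccs}, the metric $d$ generates the topology of convergence on cylinders for the suspension flow.

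The routine parts — convergence of the series, symmetry, the triangle inequality, and the equivalence with coordinatewise convergence — are identical to Proposition \ref{metriz}. The one step that genuinely uses the flow structure, and which I view as the crux, is the non-degeneracy: it is not obvious a priori that the two-dimensional measure $\nu$ on $Y$ is pinned down by the horizontal slab values $\nu(C\times[0,c])$ alone. This is rescued by flow invariance, which via Lemma \ref{lem:medi} and Kac's formula collapses $\nu$ to its base data $\lambda\mu/\int\tau d\mu$ on $\Sigma$, thereby reducing the question to the already-settled base case through Remark \ref{rem:tobase}.
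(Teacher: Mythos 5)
Your proof is correct and follows essentially the same route as the paper: the same weighted series over an enumeration of cylinders, with non-degeneracy reduced via Kac's formula and Remark \ref{rem:tobase} to the equality $\lambda_1\mu_1(C)/\int\tau d\mu_1=\lambda_2\mu_2(C)/\int\tau d\mu_2$ on the base. The paper merely spells out one extra detail you delegate to the remark (the outer-regularity step showing $\mu_1$ is a scalar multiple of $\mu_2$, plus the degenerate case $\lambda_1=0$), but the substance is identical.
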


\begin{proof}

Let  $\rho: \M_{\le 1}(\Sigma,\sigma,\tau) \times \M_{\le 1}(\Sigma,\sigma,\tau) \to \R$, be defined by
\begin{equation*}
\rho(\nu_1,\nu_2)=\sum_{k\ge 1}\frac{1}{2^k} \left|\nu_1(C_i\times [0,c])-\nu_2(C_i\times[0,c]) \right|,
\end{equation*}
where $(C_i)_i$ is some enumeration of the cylinders of $\Sigma$ and $c=c(\tau)$. The map $\rho$ is a metric. Indeed, let $\nu_1=\lambda_1 \phi(\mu_1)$ and $\nu_2=\lambda_2\phi(\mu_2)$ be in $\M_{\le 1}(\Sigma,\sigma,\tau)$, where $\lambda_1$ and $\lambda_2$ are in $[0,1]$.  Suppose that $\rho(\nu_1,\nu_2)=0$. By Remark \ref{rem:tobase} we know that 
 $$\lambda_1\frac{\mu_1(C)}{\int \tau d\mu_1}=\lambda_2\frac{\mu_2(C)}{\int \tau d\mu_2},$$
for every cylinder $C\subset \Sigma$. If $\lambda_1=0$ we necessarily have $\lambda_2=0$: in this case $\nu_1$ and $\nu_2$ are both the zero measure. Assume that $\lambda_1\ne 0$, then
$$\mu_1(C)=\frac{\lambda_2}{\lambda_1}\frac{\int \tau d\mu_1}{\int \tau d\mu_2}\mu_2(C),$$
for every cylinder $C\subset \Sigma$. By the outer regularity of Borel measures on a metric space we conclude that $\mu_1=A\mu_2$, where $A=\frac{\lambda_2}{\lambda_1}\frac{\int \tau d\mu_1}{\int \tau d\mu_2}$. This immediately implies that $\lambda_1\frac{\mu_1}{\int\tau d\mu_1}=\lambda_2\frac{\mu_2}{\int \tau  d \mu_2}$, and therefore $\nu_1=\nu_2$.  The other properties of a metric are easily verified. Note that $(\nu_n)_n$ converges on cylinders to $\nu$ if and only if $\lim_{n\to\infty}\rho(\nu_n,\nu)=0$, that is, the topology of convergence on cylinders is metrizable.  
\end{proof}

Our next result should be compared with Lemma \ref{equivtop}. It says that the topology of convergence on cylinders coincides with the weak* topology on $\M(\Sigma,\sigma,\tau)$. We emphasize that this result holds for every $\tau$ which is bounded below. 

\begin{lemma} \label{lem:w.c} Let $\nu$ and $(\nu_n)_n$ be measures in $\M(\Sigma,\sigma,\tau)$. The following assertions are equivalent
\begin{enumerate}
\item\label{1wc} The sequence $(\nu_n)_n$ converges in the weak* topology to $\nu$.
\item\label{2wc} The sequence $(\nu_n)_n$ converges on cylinders to $\nu$. 
\end{enumerate}
\end{lemma}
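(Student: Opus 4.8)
The plan is to push both implications down to the base $\Sigma$ through the product structure $\nu=\phi(\mu)=\frac{\mu\times Leb}{\int\tau\,d\mu}$ of Lemma~\ref{lem:medi}. Write $\mu_n=\psi(\nu_n)$, $\mu=\psi(\nu)$, $\ell_n=\int\tau\,d\mu_n$, $\ell=\int\tau\,d\mu$, and fix $c=c(\tau)$ with $\inf\tau>c$. The elementary observation driving everything is that for a cylinder $C$ and $0\le a\le b$ with $b\le\inf_{x\in C}\tau(x)$, the box $C\times[a,b]$ lies strictly below the roof, so Kac's formula gives $\nu_n(C\times[a,b])=\frac{(b-a)\mu_n(C)}{\ell_n}$ and $\nu(C\times[a,b])=\frac{(b-a)\mu(C)}{\ell}$. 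Thus each such box measure is $(b-a)$ times exactly the ratio $\mu_\bullet(C)/\ell_\bullet$ that Remark~\ref{rem:tobase} governs.

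For \eqref{1wc}$\Rightarrow$\eqref{2wc} I would invoke the Portmanteau Theorem (Proposition~\ref{port}). Each cylinder $C$ is clopen in $\Sigma$, so the topological boundary of $C\times[0,c]\subset Y$ is contained in the two horizontal slices $C\times\{0\}$ and $C\times\{c\}$. Since $\nu=\frac{\mu\times Leb}{\ell}$ assigns measure zero to every horizontal slice (Lebesgue measure of a point vanishes), we get $\nu(\partial(C\times[0,c]))=0$; here the roof identification at level $0$ only glues $C\times\{0\}$ to further null slices and does not enlarge the boundary in a $\nu$-positive way. Weak* convergence $\nu_n\to\nu$ and item~\eqref{port4} of Proposition~\ref{port} then yield $\nu_n(C\times[0,c])\to\nu(C\times[0,c])$, which is precisely convergence on cylinders.

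For the converse \eqref{2wc}$\Rightarrow$\eqref{1wc}, Remark~\ref{rem:tobase} reformulates the hypothesis as $\frac{\mu_n(C)}{\ell_n}\to\frac{\mu(C)}{\ell}$ for every cylinder $C$, and by the product formula above this upgrades to $\nu_n(B)\to\nu(B)$ for every box $B=C\times[a,b]$ lying strictly below the roof. I would then check, using only continuity of $\tau$ and $\inf\tau>c>0$, that the open boxes $\{C\times(a,b):b\le\inf_{x\in C}\tau(x)\}$ form a basis for the topology of $Y$: any $(x,r)$ with $r<\tau(x)$ sits in such a box once $C\ni x$ is a long enough cylinder with $\inf_C\tau>r$, while a roof point is identified with one of the form $(\sigma x,0)$ and is covered at the base of the next fiber. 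Finite intersections of these boxes are again boxes (cylinders are nested or disjoint), so a finite union of them is a finite disjoint union of boxes, on which $\nu_n\to\nu$ by additivity. Expressing an arbitrary open $\cO\subset Y$ as a countable union of such basic boxes contained in $\cO$ and truncating to finite subunions, exactly as in the proof of Lemma~\ref{equivtop}, I obtain $\liminf_n\nu_n(\cO)\ge\nu(\cO)$ for every open $\cO$; as all measures are probabilities, Proposition~\ref{port} then delivers weak* convergence.

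The box computation is routine once the product structure is in hand; the main obstacle is the topological bookkeeping in the converse direction, namely verifying that boxes strictly below the roof generate the topology of the quotient space $Y$ (correctly absorbing the roof identification via the continuity of $\tau$) and reducing a general open set to finite disjoint unions of such boxes so that the Portmanteau open-set inequality applies. I note that, consistently with the stated generality, this argument uses only that $\tau$ is continuous and bounded away from zero: neither the growth condition \eqref{Tb} nor the bounded-variation hypothesis defining the class $\cR$ is needed.
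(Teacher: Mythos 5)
Your first implication is exactly the paper's: identify $\partial(C\times[0,c])$ with the horizontal slices $C\times\{0,c\}$, note these are $\nu$-null, and apply Proposition~\ref{port}\eqref{port4}. For the converse the paper takes a slightly slicker route: it observes that the boxes $C\times(a,b)$ form a class closed under finite intersections whose countable unions exhaust the open sets of $Y$, and then simply cites Billingsley's Theorem 2.2, whereas you unpack this by hand via the $\liminf$ criterion of Proposition~\ref{port}, mimicking the proof of Lemma~\ref{equivtop}. That is a legitimate alternative, and your closing observation that only continuity and $\inf\tau>0$ are used matches the paper's own remark preceding the lemma.

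However, one step in your converse is false as stated: a finite union of boxes is \emph{not} in general a finite disjoint union of boxes. If $C'\subsetneq C$ are cylinders, then
\begin{equation*}
\bigl(C\times(0,1)\bigr)\cup\bigl(C'\times(\tfrac12,\tfrac32)\bigr)=\bigl(C'\times(0,\tfrac32)\bigr)\sqcup\bigl((C\setminus C')\times(0,1)\bigr),
\end{equation*}
and $C\setminus C'$ is typically an \emph{infinite} disjoint union of cylinders, so no finite disjoint box decomposition exists; the ``nested or disjoint'' property of cylinders does not survive the product with intervals. The repair is standard and costs nothing: either run inclusion--exclusion on $\nu_n\bigl(\bigcup_{k=1}^{M}B_k\bigr)$ using that finite intersections of boxes are boxes (this is precisely the content of the Billingsley theorem the paper cites), or accept a countable disjoint decomposition $\bigcup_{k=1}^{M}B_k=\bigsqcup_{j\ge1}D_j$ into boxes and note that $\liminf_n\nu_n\bigl(\bigsqcup_j D_j\bigr)\ge\sum_{j=1}^{N}\nu(D_j)$ for every $N$, which is all the $\liminf$ inequality requires. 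A second, smaller point: boxes $C\times(a,b)$ with $a>0$ cannot contain the base points $\Sigma\times\{0\}$ in their interior, so they do not literally form a basis at those points; this is harmless because $\Sigma\times\{0\}$ is $\nu$- and $\nu_n$-null and closed, so you may replace $\cO$ by $\cO\setminus(\Sigma\times\{0\})$ without changing any of the measures involved. With these two adjustments your argument is complete.
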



\begin{proof} Let $C\subset \Sigma$ be a cylinder. Observe that $\partial(C\times [0,c])=C\times\{0,c\}$. Since $\nu$ is a flow invariant probability measure we know that $\nu(C\times \{x\})=0$, for each $x\in \R$. We conclude that $\nu(\partial(C\times [0,c]))=0$. Finally use  Proposition \ref{port}(\ref{port4}) to conclude that (\ref{1wc}) implies (\ref{2wc}). 

We will now prove that (\ref{2wc}) implies (\ref{1wc}). A base for the topology in $Y$ is given by 
\begin{equation*}
\Omega:=\{C \times (a,b) \subset Y : C \text{ cylinder for	} \Sigma \text{ and }  a,b \in \mathbb{Q} \text{ with }  a<b \}.
\end{equation*}
It follows from the flow invariance of the measures that for every set $ C \times (a,b) \subset Y$ we have
\begin{equation*}
\lim_{n \to \infty} \nu_n( C \times (a,b) ) =  \nu(C \times (a,b)).
\end{equation*}
Observe that a finite intersection of elements in $\Omega$ is still in $\Omega$. Note that each open set $\cO\subset Y$ can be written as a countable union of elements in $\Omega$, say $\cO=\bigcup_{k\ge 1}(C_k \times (a_k, b_k))$. The result now follows from  \cite[Theorem 2.2]{bi}.
\end{proof}

One of the main properties of the class $\cR$ is that we can rule out the escape of mass by imposing a uniform bound on the integral of $\tau$ (see Lemma \ref{lem:noescape}). This illustrates the importance of part \eqref{Tb}  in  Definition \ref{def:R}.  

\begin{lemma}\label{lem:noescape} 

Let $(\mu_n)_n \subset \M(\Sigma,\sigma)$  and $\mu \in \M_{\leq 1}(\Sigma,\sigma)$ be such that $(\mu_n)_n$ converges on cylinders to the measure $\mu$. Let $\tau\in \cR$ and assume there exists $M\in \R$ such that $\int \tau d\mu_n\le M$, for every $n\in \N$. Then $\mu$ is a probability measure. Moreover, $(\mu_n)_n$ converges to $\mu$ in the weak* topology. 
\end{lemma}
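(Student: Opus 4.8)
The plan is to convert the uniform bound $\int \tau \, d\mu_n \le M$ into a uniform tightness estimate by means of property \eqref{Tb} of the class $\cR$, deduce from this that no mass escapes, and then appeal to Lemma \ref{equivtop} to upgrade convergence on cylinders to weak* convergence. The heart of the argument is the tightness step, where the growth condition \eqref{Tb} is indispensable; everything else is routine.

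First I would establish tightness. For $K\in\N$ set $R(K):=\inf_{x:\,x_1\ge K}\tau(x)$, and note that $\{x:x_1\ge K\}=\bigcup_{k\ge K}[k]$. Since $\tau\ge R(K)$ on this set and $\tau>0$,
\[
M\ge \int \tau \, d\mu_n \ge \int_{\{x_1\ge K\}}\tau \, d\mu_n \ge R(K)\,\mu_n\!\left(\bigcup_{k\ge K}[k]\right),
\]
so that $\mu_n(\bigcup_{k\ge K}[k])\le M/R(K)$ uniformly in $n$. By \eqref{Tb} we have $R(K)\to\infty$ as $K\to\infty$; hence for every $\epsilon>0$ there is a $K$ with $\mu_n(\bigcup_{k\ge K}[k])<\epsilon$ for all $n$.

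Next I would conclude that $\mu$ is a probability measure. Convergence on cylinders gives $\mu_n([k])\to\mu([k])$ for each $k$, so with $K$ as above,
\[
\mu(\Sigma)\ge \sum_{k=1}^{K-1}\mu([k])=\lim_{n\to\infty}\sum_{k=1}^{K-1}\mu_n([k])=\lim_{n\to\infty}\left(1-\mu_n\Big(\bigcup_{k\ge K}[k]\Big)\right)\ge 1-\epsilon.
\]
Letting $\epsilon\to 0$ gives $\mu(\Sigma)\ge 1$, and since $\mu\in\M_{\le 1}(\Sigma,\sigma)$ we obtain $\mu(\Sigma)=1$, so $\mu$ is a probability measure. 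Finally, $\mu$ and all the $\mu_n$ are now probability measures converging on cylinders to $\mu$, so Lemma \ref{equivtop} immediately yields weak* convergence. I expect no obstacle beyond the tightness estimate: without \eqref{Tb} mass could leak into cylinders $[k]$ with $k$ large while keeping $\int \tau \, d\mu_n$ bounded (compare the examples of Remark \ref{nomealim}), so this is precisely the step that makes the hypothesis $\tau\in\cR$ do its work, while the remaining manipulations are Fatou-type bookkeeping and a direct citation.
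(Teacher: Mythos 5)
Your proof is correct and follows essentially the same route as the paper's: the uniform bound $\int\tau\,d\mu_n\le M$ combined with condition (b) of Definition \ref{def:R} gives the uniform tightness estimate $\mu_n(\bigcup_{k\ge K}[k])\le M/\inf_{x:x_1\ge K}\tau(x)$, which passes to the limit on the finite unions of cylinders $\bigcup_{k<K}[k]$ to show $\mu(\Sigma)=1$, after which Lemma \ref{equivtop} yields weak* convergence. No issues.
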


\begin{proof} Observe that for every $k \in \N$ we have
\begin{equation*}
 \left(\inf_{x: x_0\ge k}\tau(x)\right) \mu_n\left(\bigcup_{s\ge k}[s]\right)\le \int \tau d\mu_n\le M, 
\end{equation*}
then 
\begin{equation*}
 \mu_n \left(\bigcup_{s\ge k}[s]\right)\le \frac{M}{\inf_{x: x_0\ge k}\tau(x)}.
\end{equation*} 
This is equivalent  to $\mu_n\big(\bigcup_{s< k}[s]\big)\ge 1-\frac{M}{\inf_{x: x_0\ge k}\tau(x)}.$ By definition of the convergence on cylinders we have 
\begin{equation*}
\mu\left(\bigcup_{s< k}[s]\right)=\lim_{n\to\infty}\mu_n\left(\bigcup_{s< k}[s]\right)\ge 1-\frac{M}{\inf_{x: x_0\ge k}\tau(x)}.
\end{equation*}
Since $\tau\in\cR$ we can conclude that $\lim_{k\to\infty} \mu\big(\bigcup_{s< k}[s]\big)=1$, and therefore $\mu$ is a probability measure. Since the sequence $(\mu_n)_n$ converges on cylinders to a probability measure we conclude that $(\mu_n)_n$ converges in the weak* topology  (see Lemma \ref{equivtop}). 
\end{proof}

Our next two lemmas completely describe the topology of convergence on cylinders in terms of convergence of measures in $\Sigma$. 

\begin{lemma}\label{lem:charzero} Let $\tau\in \cR$. A sequence $(\nu_n)_n\subset \M(\Sigma,\sigma,\tau)$   converges on cylinders to the zero measure if and only if 
\begin{equation*}
\lim_{n\to\infty} \int \tau d\psi(\nu_n)=\infty.
\end{equation*}
\end{lemma}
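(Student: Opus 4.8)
The plan is to transfer the statement to the base $\Sigma$ via Kac's formula and then reduce the nontrivial implication to the uniform tightness furnished by $\tau\in\cR$, exactly the estimate used in the proof of Lemma \ref{lem:noescape}. Writing $\mu_n=\psi(\nu_n)$ and $I_n=\int\tau\,d\mu_n$, Kac's formula gives $\nu_n(C\times[0,c])=c\,\mu_n(C)/I_n$ for every cylinder $C$, so by Definition \ref{def:ccs} the sequence $(\nu_n)_n$ converges on cylinders to the zero measure if and only if $\lim_{n\to\infty}\mu_n(C)/I_n=0$ for every cylinder $C\subset\Sigma$. The lemma therefore amounts to showing that this last condition is equivalent to $I_n\to\infty$.

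One implication is immediate: since each $\mu_n$ is a probability measure we have $\mu_n(C)\le 1$, so $\mu_n(C)/I_n\le 1/I_n$, and if $I_n\to\infty$ then $\mu_n(C)/I_n\to 0$ for every cylinder $C$.

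For the converse I would argue by contradiction. Suppose $\mu_n(C)/I_n\to 0$ for every cylinder $C$ but $I_n\not\to\infty$; then there are $M>0$ and a subsequence $(n_k)_k$ with $I_{n_k}\le M$. On the set $\bigcup_{s\ge N}[s]$ the roof function satisfies $\tau\ge \inf_{x:x_1\ge N}\tau(x)$, so
\begin{equation*}
\Big(\inf_{x:x_1\ge N}\tau(x)\Big)\,\mu_{n_k}\Big(\bigcup_{s\ge N}[s]\Big)\le \int\tau\,d\mu_{n_k}\le M,
\end{equation*}
as in the proof of Lemma \ref{lem:noescape}. Since $\tau\in\cR$, the quantity $\inf_{x:x_1\ge N}\tau(x)$ diverges, so we may fix $N$ large enough that $M/\inf_{x:x_1\ge N}\tau(x)<1/2$; then $\mu_{n_k}\big(\bigcup_{s=1}^{N-1}[s]\big)\ge 1/2$ for every $k$. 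As this is a union of finitely many cylinders of length one, $\sum_{s=1}^{N-1}\mu_{n_k}([s])\ge 1/2$, so for each $k$ there is $s(k)\in\{1,\dots,N-1\}$ with $\mu_{n_k}([s(k)])\ge 1/\big(2(N-1)\big)$. Because $s(k)$ ranges over a finite set, a single symbol $s^\ast$ occurs infinitely often, yielding a further subsequence along which $\mu_{n_k}([s^\ast])\ge 1/\big(2(N-1)\big)$ while $I_{n_k}\le M$, whence $\mu_{n_k}([s^\ast])/I_{n_k}\ge 1/\big(2M(N-1)\big)>0$. This contradicts $\mu_n([s^\ast])/I_n\to 0$, and the converse follows.

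The only delicate point is this converse, where one must rule out the possibility that mass escapes to infinity in such a way that every individual cylinder ratio vanishes while the integrals $I_n$ remain bounded. The hypothesis $\tau\in\cR$ is precisely what prevents it: a uniform bound on $\int\tau\,d\mu_n$ forces uniform concentration of the $\mu_n$ on the finitely many symbols $\{1,\dots,N-1\}$, and the pigeonhole step then pins a definite amount of mass on one fixed cylinder $[s^\ast]$, contradicting the assumed convergence to zero.
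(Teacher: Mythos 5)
Your proof is correct and follows essentially the same route as the paper: both directions are transferred to the base via Kac's formula, and the converse rests on the same tightness estimate $\mu_n\bigl(\bigcup_{s\ge N}[s]\bigr)\le M/\inf_{x:x_1\ge N}\tau(x)$ furnished by Definition \ref{def:R}. The only cosmetic difference is that the paper invokes Lemma \ref{lem:noescape} as a black box (bounded integrals plus convergence on cylinders to zero would force the zero measure to be a probability measure), whereas you inline that estimate and close the contradiction with a pigeonhole step on the finitely many symbols carrying mass at least $1/2$.
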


\begin{proof} To simplify notation define $\mu_n=\psi(\nu_n)$. We will first prove that 
\begin{equation} \label{eq:i}
\lim_{n\to\infty} \int \tau d\mu_n=\infty,
\end{equation}
implies that $(\nu_n)_n$ converges on cylinders to the zero measure. Note that equation \eqref{eq:i} implies that
\begin{equation*}
 \lim_{n\to\infty}\frac{\mu_n(C)}{\int \tau d\mu_n}=0,
 \end{equation*}
for every cylinder $C$. In virtue of  Remark \ref{rem:tobase} we get that $(\nu_n)_n$ converges on cylinders to the zero measure. To prove the other implication we argue by contradiction: suppose that the sequence $(\nu_n)_n$ converges to the zero measure and that there exists a subsequence $(n_k)_k$ such that $\int \tau d\mu_{n_k}\le M$, for some $M\in\R$. From Remark \ref{rem:tobase} we obtain
\begin{equation*}
 0=\lim_{k\to\infty} \frac{\mu_{n_k}(C)}{\int \tau d\mu_{n_k}}\ge\frac{1}{M} \limsup_{k\to\infty}\mu_{n_k}(C).
 \end{equation*}
In particular, $(\mu_{n_k})_k$ converges on cylinders to the zero measure. Lemma \ref{lem:noescape} implies that $\lim_{k \to \infty}\int \tau d\mu_{n_k}=\infty$, which contradicts the choice of the sequence $(n_k)_k$. 
\end{proof}

\begin{lemma}\label{lem:util} Let $\tau\in\cR$ and $(\nu_n)_n$, $\nu$ invariant probability measures for the suspension flow. Define $\mu_n=\psi(\nu_n)$ and $\mu=\psi(\nu)$. Then the following are equivalent:
\begin{enumerate}
\item\label{a} The sequence $(\nu_n)_n$  converges on cylinders to $\lambda\nu$, where $\nu\in \M(\Sigma,\sigma,\tau)$ and $\lambda\in (0,1]$.
\item\label{b} The sequence $(\mu_n)_n$ converges to $\mu$ in the weak* topology and $$\lim_{n\to\infty}\frac{\int \tau d\mu}{\int \tau d\mu_n}=\lambda\in (0,1].$$
\end{enumerate}
\end{lemma}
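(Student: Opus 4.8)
The plan is to run everything through the reformulation in Remark \ref{rem:tobase}, which identifies convergence on cylinders of $(\nu_n)_n$ to $\lambda\nu$ with the statement that $\mu_n(C)/\int\tau d\mu_n\to\lambda\,\mu(C)/\int\tau d\mu$ for every cylinder $C$. Write $a_n:=\int\tau d\mu_n$ and $a:=\int\tau d\mu$; since $\tau\in\cR$ is bounded below by some $c>0$ and the $\mu_n,\mu$ are probability measures, we have $a_n\ge c$ and $c\le a<\infty$. The implication (b)$\Rightarrow$(a) is then immediate: if $\mu_n\to\mu$ in the weak* topology, Lemma \ref{equivtop} gives $\mu_n(C)\to\mu(C)$ on every cylinder, and if $a/a_n\to\lambda$ then $1/a_n\to\lambda/a$, so the product $\mu_n(C)/a_n$ converges to $\lambda\,\mu(C)/a$, which is exactly the cylinder condition characterizing convergence of $(\nu_n)_n$ to $\lambda\nu$.

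For the reverse implication (a)$\Rightarrow$(b) the first step is to bound the sequence $(a_n)_n$. I would argue by contradiction: if some subsequence satisfied $a_{n_k}\to\infty$, then $\mu_{n_k}(C)/a_{n_k}\le 1/a_{n_k}\to0$ for every cylinder, forcing $\lambda\,\mu(C)/a=0$; choosing a cylinder $C$ with $\mu(C)>0$, which exists because $\mu$ is a probability measure, contradicts $\lambda>0$. Hence $(a_n)_n$ lies in a compact interval $[c,M]$.

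The main obstacle, and the heart of the argument, is to pin down the actual limit of $(a_n)_n$, and this is where Lemma \ref{lem:noescape} does the decisive work. I would pass to an arbitrary subsequence along which $a_{n_k}\to\alpha\in[c,M]$; then for each cylinder $\mu_{n_k}(C)=a_{n_k}\cdot\bigl(\mu_{n_k}(C)/a_{n_k}\bigr)\to(\alpha\lambda/a)\,\mu(C)$, so $(\mu_{n_k})_k$ converges on cylinders to the genuinely countably additive measure $(\alpha\lambda/a)\mu$, whose total mass is at most $1$ since it is an on-cylinders limit of probability measures. Because $a_{n_k}\le M$, Lemma \ref{lem:noescape} applies and forces this limit to be a probability measure, i.e. $\alpha\lambda/a=1$, so $\alpha=a/\lambda$. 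As every convergent subsequence of the bounded sequence $(a_n)_n$ must share this limit, we get $a_n\to a/\lambda$, whence $a/a_n\to\lambda$. Finally $\mu_n(C)=a_n\cdot\bigl(\mu_n(C)/a_n\bigr)\to(a/\lambda)\cdot\lambda\,\mu(C)/a=\mu(C)$ for every cylinder, so $(\mu_n)_n$ converges on cylinders to the probability measure $\mu$, and Lemma \ref{equivtop} upgrades this to weak* convergence, which together with $a/a_n\to\lambda$ establishes (b).
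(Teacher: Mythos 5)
Your proposal is correct and follows essentially the same route as the paper's proof: both directions are run through Remark \ref{rem:tobase}, the sequence $\bigl(\int\tau\, d\mu_n\bigr)_n$ is shown to be bounded (you inline the contradiction that the paper delegates to Lemma \ref{lem:charzero}), and Lemma \ref{lem:noescape} is used along convergent subsequences to force the limit to be a probability measure and thereby identify the limit of the integrals. The only differences are cosmetic phrasings of the same subsequence argument.
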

\begin{proof} We first prove that (\ref{b}) implies (\ref{a}). If $(\mu_n)_n$ converges in the weak* topology to $\mu$, then $\lim_{n\to\infty}\mu_n(C)=\mu(C)$, for every cylinder $C$. It follows from the hypothesis on $\lambda$ that  
$$\lim_{n\to\infty} \frac{\mu_n(C)}{\int \tau d\mu_n}=\lambda\frac{\mu(C)}{\int \tau d\mu}.$$
Remark \ref{rem:tobase} implies that $(\nu_n)_n$ converges on cylinders to $\lambda\nu$.  

Now suppose that $(\nu_n)_n$ converges on cylinders to $\lambda\nu$. It follows from Lemma \ref{lem:charzero} and the assumption that $\lambda >0$ that 
 $(\int \tau d\mu_n)_n$ is a bounded sequence.  After passing to a subsequence we can assume that $(\int \tau d\mu_n)_n$ is convergent. Let $L \in \R$ be such that $\lim_{n \to \infty}\int \tau d\mu_n=L$. Remark \ref{rem:tobase} implies that 
$$\lim_{n\to\infty} \mu_n(C)=\frac{\lambda L}{\int\tau d\mu}\mu(C),$$
for every cylinder $C$. We conclude that $(\mu_n)_n$ converges on cylinders to $\mu_0:=\frac{\lambda L}{\int\tau d\mu}\mu$. Observe that for sufficiently large $n$ we have $\int \tau d\mu_n\le (L+1).$ Lemma \ref{lem:noescape} implies that $\mu_0$ is a probability measure and that $(\mu_n)_n$ converges in the weak*  topology for $\mu_0$. Since $\mu$ is a probability measure we conclude that $\lambda L=\int \tau d\mu$, and therefore $\mu_0=\mu$. This argument shows that every subsequence of our initial sequence $(\mu_n)_n$ has a sub-subsequence converging to $\mu$. This readily implies that the whole sequence converges to $\mu$, and that $\lim_{n\to\infty}\frac{\int \tau d\mu}{\int \tau d\mu_n}=\lambda$.
\end{proof}



Our next result should be compared with Lemma \ref{tozero}. As mentioned in the introduction, this is a necessary ingredient to prove that $\M_{\le1}(\Sigma,\sigma,\tau)$ is the Poulsen simplex. 

\begin{lemma}  \label{ftz}
If $\tau\in \cR$ then there exists a sequence of periodic measures $(\nu_n)_n\subset \M(\Sigma,\sigma,\tau)$ that converges on cylinders to the zero measure.
\end{lemma}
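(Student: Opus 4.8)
The plan is to reduce the statement, via Lemma~\ref{lem:charzero} and the bijection $\phi$ of Lemma~\ref{lem:medi}, to the purely symbolic claim that there is a sequence of periodic measures $(\mu_k)_k\subset\M(\Sigma,\sigma)$ with $\lim_{k\to\infty}\int\tau\,d\mu_k=\infty$; then $\nu_k:=\phi(\mu_k)$ does the job, since each $\mu_k$ is supported on a single periodic orbit, so $\int\tau\,d\mu_k<\infty$ and $\phi(\mu_k)$ is a well-defined periodic measure of the flow. Write $R_k:=\inf_{x:\,x_1\ge k}\tau(x)$. By property~\eqref{Tb} of the class $\cR$ the sequence $(R_k)_k$ is non-decreasing, bounded below by $\inf\tau>0$, and tends to $\infty$. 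If $p$ is a periodic point of minimal period $m$ whose orbit reads the word $s_0s_1\cdots s_{m-1}$, then the first coordinate of $\sigma^i p$ is $s_i$, so $\tau(\sigma^i p)\ge R_{s_i}$ and hence, for its periodic measure $\mu_p$,
\[
\int\tau\,d\mu_p\ \ge\ \frac1m\sum_{i=0}^{m-1}R_{s_i}.
\]
Everything thus reduces to producing orbits whose symbol multiset has large $R$-average.

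For each $k$ I would let $P_k$ be a periodic orbit of shortest period $m_k$ that visits some symbol $\ge k$; such an orbit exists because $\Sigma$ is transitive, so the symbol $k$ itself lies on a closed admissible loop. A minimality argument shows $P_k$ is a \emph{simple} cycle: if some vertex were repeated we could split the closed walk at that vertex into two strictly shorter closed walks, at least one of which still contains a symbol $\ge k$, contradicting minimality. Thus $P_k$ visits $m_k$ \emph{distinct} symbols. Since there are only $\ell-1$ symbols smaller than $\ell$, at least $m_k-\ell+1$ of the symbols of $P_k$ are $\ge\ell$; taking $\ell=\lceil m_k/2\rceil$ gives at least $m_k/2$ symbols that are $\ge\lceil m_k/2\rceil$. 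Feeding this into the lower bound above yields
\[
\int\tau\,d\mu_{P_k}\ \ge\ \tfrac12\,R_{\lceil m_k/2\rceil},
\]
whereas retaining only the single symbol $\ge k$ gives the complementary estimate $\int\tau\,d\mu_{P_k}\ge R_k/m_k$.

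It then remains to combine the two estimates to conclude that $a_k:=\int\tau\,d\mu_{P_k}\to\infty$. If this failed, some subsequence would satisfy $a_k\le B$; then $R_{\lceil m_k/2\rceil}\le 2B$ along it, and since $R_j\to\infty$ this forces $m_k$ to stay bounded, say $m_k\le M$, whence $a_k\ge R_k/m_k\ge R_k/M\to\infty$, a contradiction. Hence $a_k\to\infty$, and Lemma~\ref{lem:charzero} gives that $\nu_k=\phi(\mu_{P_k})$ converges on cylinders to the zero measure. The one genuinely nontrivial point — and the step I expect to be the main obstacle — is the combinatorial estimate of the second paragraph: escaping to high symbols may require long return loops, but because only finitely many symbols lie below any threshold, a \emph{shortest} escaping loop is simple and must therefore spend a definite fraction of its length on symbols that are themselves large, which is exactly what prevents the $R$-average from collapsing. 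Transitivity enters only to guarantee the existence of the loops $P_k$, and property~\eqref{Tb} of $\cR$ is what converts ``large symbols'' into ``large $\tau$''.
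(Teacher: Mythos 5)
Your proof is correct, but it takes a genuinely different route from the paper's. The paper splits into two cases according to whether $(\Sigma,\sigma)$ has the $\F$-property: in the $\F$-case it invokes Lemma \ref{tozero} to get base measures converging on cylinders to zero and then Lemma \ref{lem:noescape} to deduce $\int\tau\,d\mu_n\to\infty$; in the non-$\F$-case it uses the infinitely many periodic words of a fixed length through a fixed symbol, which must contain arbitrarily large letters. You instead give a unified, self-contained construction with no case distinction and no appeal to Lemma \ref{tozero}: for each $k$ you take a \emph{shortest} periodic orbit visiting a symbol $\ge k$, observe that minimality forces it to be a simple cycle (splitting a repeated vertex would yield a strictly shorter escaping loop), hence its $m_k$ symbols are distinct, so at least half of them exceed $\lceil m_k/2\rceil$; the two resulting bounds $\int\tau\,d\mu_{P_k}\ge\tfrac12 R_{\lceil m_k/2\rceil}$ and $\int\tau\,d\mu_{P_k}\ge R_k/m_k$ are jointly incompatible with boundedness, so $\int\tau\,d\mu_{P_k}\to\infty$ and Lemma \ref{lem:charzero} finishes as in the paper. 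The paper's route is economical within its own architecture (Lemma \ref{tozero} is needed anyway for Theorem \ref{sub:comp}), whereas yours isolates exactly what property \eqref{Tb} of $\cR$ is doing, gives an explicit quantitative lower bound on $\int\tau\,d\mu_{P_k}$, and would still work verbatim without ever mentioning the $\F$-property; all the steps (existence of the loops via transitivity, the simple-cycle argument, and the monotonicity of $R_k$) check out.
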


\begin{proof}
We will separate our analysis into two cases. 

\emph{Case 1 (assume $(\Sigma,\sigma)$ satisfies the $\F-$property)}: By Lemma \ref{tozero}  there exists a sequence of periodic measures $(\mu_n)_n$ which converges on cylinders to the zero measure. Observe that every periodic measure belong to $\M_\tau$, in particular   $\phi(\mu_n) \in \M(\Sigma,\sigma,\tau)$. Now, by Lemma \ref{lem:noescape} we conclude that $\lim_{n\to\infty}\int \tau d\mu_n=\infty$. It follows from Lemma \ref{lem:charzero} that the sequence $(\phi(\mu_n))_n$ converges to the zero measure.

\emph{Case 2 (assume $(\Sigma,\sigma)$ does not satisfies the $\F-$property}): In this case there exists an element $a$ in the alphabet, $l\in \N$, and a sequence $(p_n)_n$ of distinct periodic points of length $l$ such that $p_n\in [a]$. The periodic measure associated to $p_n$ is denoted by $\eta_n$. Since $\tau \in \cR$, for $N\in\R$ there exists $n_N$ such that for $n\ge n_N$ we have $S_l\tau(p_{n})>N$. Here $S_l\tau$ is the Birkhoff sum of $\tau$ of length $l$. In particular  $\int \tau d\eta_n=\frac{1}{l}S_l\tau(p_{n})\ge N/l$. This implies that $\lim_{n\to\infty}\int \tau d\eta_n=\infty$. The result then  follows from Lemma \ref{lem:charzero}.
\end{proof}


Recall that the set $L(\Sigma)$ was introduced in Definition \ref{def:L}. We will now prove a compactness result similar to Proposition \ref{extension}. The proof of Proposition \ref{prop:F} is significantly simpler than the one of Proposition \ref{extension}; it would be interesting if this result can be generalized to a larger class of potentials. 

\begin{proposition}\label{prop:F} Assume that $\tau\in \cR$. Let $(\mu_n)_n$ be a sequence of periodic measures on $\Sigma$. Suppose that $(\mu_n)_n$ converges on cylinders to $F\in L(\Sigma)$, and that there exists $M\in \R$ such that $\int \tau d\mu_n\le M$, for all $n\in \N$. Then $F$ extends to an invariant probability measure. 
\end{proposition}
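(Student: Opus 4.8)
The plan is to mirror the structure of the proof of Proposition \ref{extension}: first verify that $F$ satisfies the consistency relation required by Kolmogorov's extension theorem, then obtain a countably additive measure, deduce its invariance, and finally check that it is a probability measure. The essential simplification is that the delicate combinatorial argument powered by the $\F$-property (Lemma \ref{lem:mea}) will be replaced by a soft tail estimate coming from the integrability hypothesis $\int \tau\,d\mu_n\le M$ together with property \eqref{Tb} of the class $\cR$.

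The heart of the matter is the identity
$$F(C)=\sum_{k\ge 1}F(Ck),\qquad C=[a_1,\dots,a_m],\ Ck=[a_1,\dots,a_m,k].$$
As in Proposition \ref{extension}, this is equivalent to $\lim_{k\to\infty}\lim_{n\to\infty}\mu_n(\bigcup_{s\ge k}Cs)=0$. The key observation is that $\bigcup_{s\ge k}Cs\subset \sigma^{-m}\big(\bigcup_{s\ge k}[s]\big)$, since every $x\in Cs$ has $\sigma^m(x)$ with first coordinate $s$. Invariance of $\mu_n$ therefore gives $\mu_n(\bigcup_{s\ge k}Cs)\le \mu_n(\bigcup_{s\ge k}[s])$, and the estimate from the proof of Lemma \ref{lem:noescape} yields the uniform bound $\mu_n(\bigcup_{s\ge k}[s])\le M/\inf_{x:x_1\ge k}\tau(x)$. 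Letting $n\to\infty$ and then $k\to\infty$, property \eqref{Tb} of $\tau\in\cR$ forces the bound to $0$, which is the desired identity. Applying the same estimate directly to the whole space, I would also record that $\sum_{j<k}F([j])=\lim_{n}\mu_n(\bigcup_{j<k}[j])\ge 1-M/\inf_{x:x_1\ge k}\tau(x)$, whence $\sum_{j\ge 1}F([j])=1$; this guarantees that the finite-dimensional distributions below are honest probability measures.

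With these two facts in hand I would invoke Kolmogorov's extension theorem exactly as in Proposition \ref{extension}. To $\{1,\dots,n\}$ one associates the atomic measure $\nu_n$ on $\N^{\{1,\dots,n\}}$ that assigns mass $F([m_1,\dots,m_n])$ to $[m_1,\dots,m_n]$; the identity above is precisely the consistency condition $\nu_n(\cdot)=\nu_{n+1}(\cdot\times\N)$, and $\sum_j F([j])=1$ makes each $\nu_n$ a probability measure. Kolmogorov's theorem then produces a Borel probability measure $\mu$ on $\N^\N$ which, because $F$ vanishes on non-admissible cylinders, is supported on $\Sigma$ and satisfies $\mu(D)=F(D)$ for all $D\in FinCyl(\Sigma)$; in particular $(\mu_n)_n$ converges on cylinders to $\mu$.

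Finally, invariance of $\mu$ follows at once from Lemma \ref{invariance}, as $\mu$ is a countably additive sub-probability limit of the invariant probabilities $\mu_n$. That $\mu$ is a genuine probability measure is already contained in the computation $\sum_j F([j])=1$, and can alternatively be obtained from Lemma \ref{lem:noescape} now that $\mu$ is known to lie in $\M_{\le 1}(\Sigma,\sigma)$. The only real obstacle is the consistency identity; once the integrability bound is coupled with invariance to control the tails $\bigcup_{s\ge k}Cs$ uniformly in $n$, the remainder is a direct application of results already established, which is exactly why this argument is substantially simpler than the proof of Proposition \ref{extension}.
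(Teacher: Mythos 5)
Your proposal is correct and follows essentially the same route as the paper: reduce the consistency identity to $\lim_{k}\lim_{n}\mu_n(\bigcup_{s\ge k}Cs)=0$, bound $\mu_n(\bigcup_{s\ge k}Cs)\le \mu_n(\bigcup_{s\ge k}[s])\le M/\inf_{x:x_1\ge k}\tau(x)$ using the integrability hypothesis and property (b) of $\cR$, then conclude via Kolmogorov's extension theorem, Lemma \ref{invariance}, and Lemma \ref{lem:noescape}. The only cosmetic difference is that you justify the key inequality by the containment $\bigcup_{s\ge k}Cs\subset\sigma^{-m}(\bigcup_{s\ge k}[s])$ and invariance, while the paper phrases the same fact in terms of the equidistribution of the periodic measure along its orbit.
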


\begin{proof} We will follow the strategy of the proof of Proposition \ref{extension}. It is enough to prove that  
\begin{equation*}
\lim_{k\to\infty}\lim_{n\to\infty}\mu_n\left(\bigcup_{s\ge k} Cs\right)=0,
\end{equation*}
where $C=[a_0...a_{m-1}]$ is a cylinder.   Let $p_n\in \Sigma$ be a periodic point of period $r_n$  and  $\mu_n$ be the periodic measure  associated to the point $p_n$. It is important to observe that 
\begin{align}\label{xx}
\mu_n\left(\bigcup_{s\ge k}[s]\right)\ge \mu_n\left(\bigcup_{s\ge k} Cs\right).
\end{align}
Indeed, the probability measure $\mu_n$ is equidistributed on the set 
\begin{equation*}
\{p_n,\sigma(p_n),...,\sigma^{r_n-1}(p_n)\}.
\end{equation*}
Observe that $\sigma^k(p_n)\in Cs$, implies that $\sigma^{k+m}(p_n)\in [s]$, from which inequality (\ref{xx}) follows. Since $\int \tau d\mu_n\le M$, we obtain that 
\begin{equation*}
\mu_n\left(\bigcup_{s\ge k}[s]\right)\le \frac{\int \tau d\mu_n}{\inf_{x: x_1\ge k}\tau(x)}\le\frac{M}{\inf_{x: x_1\ge k}\tau(x) },
\end{equation*}
but this immediately implies that 
$$\lim_{k\to\infty}\lim_{n\to\infty} \mu_n\left(\bigcup_{s\ge k}[s]\right)\le \limsup_{k\to\infty}\frac{M}{\inf_{x: x_1\ge k}\tau(x) }.$$
Since $\tau\in \cR$ we obtain that the right hand side in the last inequality is zero. Finally use inequality (\ref{xx}) to conclude that 
\begin{equation*}
\lim_{k\to\infty}\lim_{n\to\infty}\mu_n\left(\bigcup_{s\ge k} Cs\right)=0.
\end{equation*}
 As in the proof of Proposition \ref{extension} we have that $F$ extends to a measure on $\Sigma$. Lemma \ref{lem:noescape} implies that $F$ is a probability measure. The invariance follows from Lemma \ref{invariance}. 
\end{proof}

Proposition \ref{prop:F} states that, assuming an integrability condition, limits of periodic measures are invariant  probability measures. It is then of particular importance to know whether it  is possible to approximate a sequence of invariant measures  $(\mu_n)_n$ by periodic measures such that the assumption $\sup_n\int \tau d\mu_n<\infty$ still remains true for the sequence of periodic measures. Proposition \ref{densusp} address  this question. In the proof of Proposition \ref{densusp} we will need to approximate a measure in $\M(\Sigma,\sigma)$ by a convex combination of finitely many ergodic probability measures. This result is  classical in the compact case: the space of invariant probability measures is a compact convex set, and therefore the result follows from the  Krein-Milman theorem. In lack of a good reference we provide a proof of this result that avoids the Krein-Milman theorem. 

\begin{lemma}\label{noref} Let $\mu\in \M(\Sigma,\sigma)$ and $(f_i)_{i=0}^n$ be real-valued functions in $L^1(\mu)$. Given $\epsilon>0$, there exists  $\mu_1\in\M(\Sigma,\sigma)$ that is a convex combination of finitely many ergodic probability measures which satisfies
$$\left|\int f_i d\mu-\int f_i d\mu_1 \right|\le\epsilon,$$
for every $i\in\{0,...,n\}$.
\end{lemma}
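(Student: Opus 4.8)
The plan is to reduce the statement to a finite-dimensional convexity fact via the ergodic decomposition, thereby bypassing the Krein--Milman theorem. By the ergodic decomposition recorded in Lemma~\ref{lem_inv} (see \cite[p.~153]{wa}), there is a Borel probability measure $\tau$ on the set $\M_e(\Sigma,\sigma)$ of ergodic measures such that $\mu=\int \nu\,d\tau(\nu)$, in the sense that $\int f\,d\mu=\int_{\M_e}\big(\int f\,d\nu\big)\,d\tau(\nu)$ for every $f\in L^1(\mu)$. First I would use the hypothesis $f_i\in L^1(\mu)$ to guarantee that $g_i(\nu):=\int f_i\,d\nu$ is defined for $\tau$-almost every $\nu$ and lies in $L^1(\tau)$: indeed $\int\big(\int |f_i|\,d\nu\big)\,d\tau(\nu)=\int|f_i|\,d\mu<\infty$, so $f_i\in L^1(\nu)$ for $\tau$-a.e.\ $\nu$ and $\int g_i\,d\tau=\int f_i\,d\mu$. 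Discarding a $\tau$-null set, I may assume each $g_i$ is finite on the support of $\tau$.

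Next I would pass to $\R^{n+1}$. Define $\Phi(\nu):=(g_0(\nu),\dots,g_n(\nu))$ and $p:=(\int f_0\,d\mu,\dots,\int f_n\,d\mu)$, so that $p=\int \Phi\,d\tau$ is the $\tau$-barycenter of $\Phi$. The key claim is that $p$ belongs to the closed convex hull $K:=\overline{\mathrm{conv}}\big(\Phi(\M_e)\big)$ in $\R^{n+1}$. This is exactly where Krein--Milman would usually enter, but in finite dimensions it is replaced by a one-line separation argument: if $p\notin K$, then since $K$ is closed and convex there exist a linear functional $\ell$ on $\R^{n+1}$ and a constant $\alpha$ with $\ell(a)\le\alpha<\ell(p)$ for all $a\in K$; applying this to $a=\Phi(\nu)$ and integrating against $\tau$ gives $\ell(p)=\int\ell(\Phi(\nu))\,d\tau(\nu)\le\alpha$, a contradiction (note $\ell\circ\Phi$ is a finite linear combination of the $g_i$, hence in $L^1(\tau)$).

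Finally I would extract the measure. Since $p\in K$, there is $q\in\mathrm{conv}\big(\Phi(\M_e)\big)$ with $\max_i|p_i-q_i|<\epsilon$; being a point of the convex hull in $\R^{n+1}$, $q$ is a finite convex combination $q=\sum_{j=1}^m\lambda_j\Phi(\nu_j)$ with $\lambda_j\ge 0$, $\sum_j\lambda_j=1$, and each $\nu_j\in\M_e(\Sigma,\sigma)$ taken from the good set where the $g_i$ are defined (by Carath\'eodory one may even take $m\le n+2$). Setting $\mu_1:=\sum_{j=1}^m\lambda_j\nu_j\in\M(\Sigma,\sigma)$, a finite convex combination of ergodic measures, linearity yields $\int f_i\,d\mu_1=\sum_j\lambda_j g_i(\nu_j)=q_i$, whence $\big|\int f_i\,d\mu-\int f_i\,d\mu_1\big|=|p_i-q_i|<\epsilon$ for every $i$, as required.

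The main obstacle is conceptual bookkeeping rather than a hard estimate: one must ensure the ergodic decomposition integrates the possibly unbounded functions $g_i$ legitimately (so that both the barycenter identity $p=\int\Phi\,d\tau$ and the integrability of $\ell\circ\Phi$ hold), and one must separate from the \emph{closed} convex hull so that the finite-dimensional separation theorem applies. Once the problem is transported to $\R^{n+1}$ everything is elementary and, crucially, invokes only separation of a point from a closed convex set in finite dimensions, never Krein--Milman.
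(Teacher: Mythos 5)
Your proof is correct, and it takes a genuinely different route from the one in the paper. The paper first reduces to non-negative $f_i$, works with the ergodic decomposition fibered over the base space, $\mu=\int \mu_x\,dm(x)$, approximates each $\int F_k\,d\mu$ (where $F_k(x)=\int f_k\,d\mu_x$) from below by a simple function subordinate to a finite measurable partition of a full-measure set, takes the common refinement of these partitions, chooses a representative point $x_i$ in each cell, and sets $\mu_1=\sum_i\mu(U_i)\mu_{x_i}$. You instead push the decomposition forward to $\R^{n+1}$ via $\nu\mapsto\bigl(\int f_0\,d\nu,\dots,\int f_n\,d\nu\bigr)$, identify the target vector as the barycenter of the pushforward measure, show it lies in the closed convex hull of the (essential) image by strict separation of a point from a closed convex set in finite dimensions, and then approximate by an honest finite convex combination, with Carath\'eodory even bounding the number of terms by $n+2$. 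Your version is shorter and more conceptual: it isolates the finite-dimensional convexity fact doing the work, needs no positive/negative-part reduction, and avoids choosing partitions; the paper's version is more hands-on and stays entirely within elementary measure theory on $\Sigma$ without formalizing a measure on $\M_e(\Sigma,\sigma)$. Both achieve the stated goal of bypassing Krein--Milman. The only points to make explicit in a polished write-up are (i) the measurability of $\nu\mapsto\int f_i\,d\nu$ and the identity $\int f_i\,d\mu=\int\bigl(\int f_i\,d\nu\bigr)\,d\tau(\nu)$ for $f_i\in L^1(\mu)$, which are part of the standard ergodic decomposition theorem, and (ii) that the closed convex hull is taken over the full-$\tau$-measure set where all the $g_i$ are finite, so that the extracted ergodic measures $\nu_j$ integrate every $f_i$; you address both, so there is no gap.
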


\begin{proof} It is enough to prove the result under the assumption that each $f_i$ is non-negative. Indeed, let $f_{i}^+$ and $f_{i}^-$ be the positive and negative parts of $f_i$. Applying the result to the set  $(f_i^+)_i\bigcup(f_i^-)_i$  and $\epsilon/2$, we obtain  a measure $\mu_1$. By triangle inequality, the measure $\mu_1$ verifies the inequalities in the statement of Lemma \ref{noref}.

From now on we assume that each $f_i$ is non-negative. Let $\mu=\int \mu_x d m(x)$ be the ergodic decomposition of $\mu$, in particular, 
\begin{equation*}
\int f_k d\mu=\int \left(\int f_k d\mu_x \right)d m(x),
\end{equation*}
for $k\in \{0,...,n\}$.   By ergodic decomposition, the measure $\mu_x$ is ergodic for  $\mu$-almost every $x\in \Sigma$. We choose  a measurable set  $S\subset \Sigma$ such that $\mu(S)=1$, $\mu_x$ is ergodic for every $x\in S$, and $f_k$ is $\mu_x$-integrable for every $x\in S$ and $k\in\{0,...,n\}$. Given $x\in S$ we define $F_k(x):=\int f_k d\mu_x$. Observe that $\int f_kd\mu=\int_S F_k d\mu$. By definition of the integral we know that 
\begin{align*} 
\int_S F_k d\mu
=\sup \left\{\int_S g d\mu: g\text{ is simple and }g\le F_k \right\}.
\end{align*}
This immediately implies that there exists a measurable partition $\cU_k=\{U^{(k)}_1,...,U_{p_k}^{(k)}\}$ of $S$, such that, given $\epsilon >0$, 
\begin{align*}
\int_S F_k d\mu-\epsilon &<\sum_{i=1}^{p_k} \left(\inf_{x\in U_i^{(k)}} F_k(x)\right)\mu(U_i^{(k)}) 
\le  \int_S F_kd\mu
\end{align*}
If $\cP$ and $\cQ$ are partitions then $\cP\wedge\cQ$   denotes the common refinement of $\cP$ and $\cQ$. That is,  $A\in \cP\wedge \cQ$ if $A=P_1\cap Q_1$, where $P_1\in \cP$ and $Q_1\in \cQ$. Consider the partition $\cU=\bigwedge_{i=0}^n \cU_i$ of $S$, and write $\cU=\{U_1,...,U_q\}$. Choose a point $x_i\in U_i$, for every $i\in \{1,...,q\}$. Note that 
\begin{equation*}
\int_S F_k d\mu-\epsilon<\sum_{i=1}^{q} F_k(x_i)\mu(U_i)\le \int_S F_kd\mu,
\end{equation*}
for every $k\in \{0,...,n\}$. We conclude that 
\begin{equation*}
\left|\int f_k d\mu-\sum_{i=1}^q \mu(U_i)\int f_k d\mu_{x_i} \right|\le \epsilon,
\end{equation*}
for every $k\in \{0,...,n\}$. Finally define $\mu_1=\sum_{i=1}^q \mu(U_i)\mu_{x_i}$.

\end{proof}

We will now prove a refinement of Theorem \ref{lem:cs}. As mentioned before, this is an important ingredient to increase the applicability of Proposition \ref{prop:F}. Recall that $\tau$ is positive and bounded away from zero.

\begin{proposition} \label{densusp} Let $(\Sigma, \sigma)$ be a transitive countable Markov shift and  $\tau$ be a uniformly continuous function such that $var_2(\tau)$ is finite. For every  $\mu\in \M_\tau$ there exists a sequence of periodic measures $(\mu_n)_n$ that converges in the weak* topology to $\mu$  and such that  $\lim_{n\to\infty}\int \tau d\mu_n=\int \tau d\mu$. 
\end{proposition}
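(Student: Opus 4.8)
The plan is to upgrade the weak* density of periodic measures (Theorem \ref{lem:cs}) to density in the finer sense that also records $\int \tau\,d\mu$, using the ergodic theorem together with a closing/concatenation construction; the hypotheses that $\tau$ is uniformly continuous and $var_2(\tau)<\infty$ are exactly what is needed to keep the $\tau$-mass of the pieces under control. First I would reduce to the ergodic case. Fix a countable family $(g_j)_j$ of bounded Lipschitz functions determining the weak* topology. Given $\mu\in\M_\tau$, $\epsilon>0$ and finitely many $g_1,\dots,g_r$, apply Lemma \ref{noref} with the test functions $\tau,g_1,\dots,g_r$ (note $\tau\in L^1(\mu)$ since $\mu\in\M_\tau$) to obtain a finite convex combination $\nu=\sum_{i=1}^s\lambda_i\nu_i$ of ergodic measures with $\bigl|\int f\,d\mu-\int f\,d\nu\bigr|\le\epsilon$ for every $f\in\{\tau,g_1,\dots,g_r\}$; because $\tau$ is among the test functions, the proof of Lemma \ref{noref} produces ergodic components with $\int\tau\,d\nu_i<\infty$.

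Next I would approximate a single ergodic $\nu_i$ with $\int\tau\,d\nu_i<\infty$ by periodic orbits. Choose a point $x$ that is $\nu_i$-generic, meaning its empirical measures converge weak* to $\nu_i$ and $\tfrac1N S_N\tau(x)\to\int\tau\,d\nu_i$; such $x$ exists $\nu_i$-a.e.\ by applying the Birkhoff theorem to $\tau\in L^1(\nu_i)$ and to the family $(g_j)_j$ and intersecting the resulting full-measure sets. I may also assume $\nu_i([x_1x_2])>0$, which holds a.e. By ergodicity there are arbitrarily large $N$ with $x_{N+1}x_{N+2}=x_1x_2$; for such $N$ the word $x_1\cdots x_N$ closes up to a periodic point $p$ of period $N$ with $p_j=x_j$ for $1\le j\le N+2$. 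Since $\sigma^k p$ and $\sigma^k x$ then agree on their first $N+2-k$ coordinates, uniform continuity gives $|\tau(\sigma^kp)-\tau(\sigma^kx)|\le var_{N+2-k}(\tau)$, whence
\begin{equation*}
\frac1N\bigl|S_N\tau(p)-S_N\tau(x)\bigr|\le\frac1N\sum_{j=3}^{N+2}var_j(\tau).
\end{equation*}
As $var_j(\tau)\to0$ with $var_j(\tau)\le var_2(\tau)<\infty$ for $j\ge2$, this Ces\`aro average tends to $0$, so $\int\tau\,d\mu_p\to\int\tau\,d\nu_i$; the same comparison with any bounded $g_j$ in place of $\tau$ shows $\mu_p\to\nu_i$ weak*.

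To realise the convex combination $\nu=\sum_i\lambda_i\nu_i$ by a single periodic measure I would concatenate. Fixing $\nu_i$-generic points $x^{(i)}$ with first symbol $a_i$, choose cut times $N_i$ with $x^{(i)}_{N_i}=a_i$ and $N_i/\sum_jN_j\to\lambda_i$ (possible since returns to $[a_i]$ have positive frequency), so that each block $w_i=x^{(i)}_1\cdots x^{(i)}_{N_i}$ begins and ends at the fixed symbol $a_i$. By transitivity there are admissible bridges $c_i$ joining $a_i$ to $a_{i+1}$ (indices mod $s$), and these depend only on the fixed symbols $a_i$, hence are independent of the $N_i$. Then $q=(w_1c_1\cdots w_sc_s)^\infty$ is a periodic point, and I would evaluate $\int\tau\,d\mu_q$ block by block: each $w_i$-block contributes $\approx N_i\int\tau\,d\nu_i$ by the estimate above, while along a bridge $c_i$ the first two coordinates of the relevant points are fixed words, so $var_2(\tau)<\infty$ bounds $\tau$ there uniformly in the $N_i$; the boundary term at the end of each block is likewise controlled (its value is uniformly bounded and the matching Birkhoff term, divided by $N_i$, vanishes by telescoping). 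Dividing by the period $\sum_iN_i+\sum_i|c_i|\to\infty$, all bridge and boundary contributions disappear and $\int\tau\,d\mu_q\to\sum_i\lambda_i\int\tau\,d\nu_i=\int\tau\,d\nu$; the analogous, easier computation with the bounded $g_k$ gives $\int g_k\,d\mu_q\to\int g_k\,d\nu$. Since the weak* topology is metrizable, it then suffices to run this construction with $\epsilon=1/n$ and with enough functions $g_1,\dots,g_{r(n)}$ to force weak* closeness within $1/n$, and to diagonalise, producing a single sequence of periodic measures converging weak* to $\mu$ with $\int\tau\,d\mu_n\to\int\tau\,d\mu$.

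The main obstacle is precisely the control of $\int\tau$: since $\tau$ is unbounded it is invisible to the weak* topology, which is what makes the statement stronger than Theorem \ref{lem:cs}. Everything hinges on ensuring that the closing and bridging words carry a negligible amount of $\tau$-mass. This is arranged by cutting the orbits at a fixed return symbol (so the bridges are fixed finite words) and by exploiting $var_2(\tau)<\infty$ together with the uniform continuity of $\tau$, through the decay of $var_j(\tau)$ and the resulting vanishing of the Ces\`aro averages above. Keeping track of the boundary terms between consecutive blocks, and verifying that the time fractions and the $\tau$-integrals survive the diagonalisation, is the most delicate part.
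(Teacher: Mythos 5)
Your proposal is correct and follows essentially the same route as the paper: reduce via Lemma \ref{noref} to a finite convex combination of ergodic measures, build a periodic orbit by concatenating long Birkhoff-generic blocks joined by finitely many fixed bridge words supplied by transitivity, and use $var_2(\tau)<\infty$ together with the decay of $var_n(\tau)$ to show the bridges and block boundaries carry negligible $\tau$-mass. The only differences are cosmetic (you cut blocks at returns to a fixed symbol and close on a $2$-cylinder, while the paper cuts at a fixed time $s_0$ with endpoints forced into a finite set $K_M$), so the two arguments are interchangeable.
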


\begin{proof}  In virtue of Lemma \ref{noref} it is possible to find a sequence $(\mu_n)_n$ of measures, each a convex combination of finitely many ergodic probability measures, satisfying $d(\mu_n,\mu)\le \frac{1}{n}$, and such that $\lim_{n\to\infty}\int\tau d\mu_n=\int \tau d\mu$.  In particular, it is enough to prove that the result holds for measures which are a finite convex combination of ergodic measures. We can moreover assume that the weights in the convex combination are rational numbers. Thus, from now on we assume that   $\mu=\frac{1}{N}\sum_{j=1}^N \mu_j$, where each $\mu_j$ is ergodic. 

Let $\F_0=\{f_1,...,f_l\}\subset C_b(\Sigma)$ be a collection of bounded uniformly continuous functions on $\Sigma$  and define $\F:=\F_0\cup \{\tau\}$. By assumption each $f\in \F$ is uniformly continuous. In particular, given $\epsilon>0$, there exists $N_0=N_0(\epsilon)$ such that $var_n(f)\le \frac{\epsilon}{4}$, for every $f\in \F$ and $n\ge N_0$. Define $C_0:=\max_{f\in \F_0}\max_{x\in \Sigma}|f(x)|$. 

Choose $M$ such that $\mu_j(K_M)>9/10$, for every $j\in \{1,...,N\}$, where $K_M=\bigcup_{s=1}^M[s]$. By transitivity of $(\Sigma,\sigma)$ there exists a number $L$ such that every pair of numbers in $\{1,...,M\}^2$  can be connected with an admissible word of length at most $L$. For each pair $(a,b)\in\{1,...,M\}^2$ we choose a point $p_{a,b}$ such that $p_{a,b}\in[a]$ and $\sigma^{c(a,b)-1}(p_{a,b})\in [b]$, where $c(a,b)\le L$. Recall that $S_n\tau(x)$ denotes the Birkhoff sum of length $n$ of the point $x$. Set $C_1=\max_{a,b}|S_{c(a,b)}\tau(p_{a,b})|$.  Define 
\begin{equation*}
A^s_{j,\epsilon}= \left\{ x\in \Sigma: \left|\frac{1}{m}\sum_{i=0}^{m-1}f(\sigma^i x)-\int f d\mu_j \right|<\frac{\epsilon}{4},\text{ for every }f\in\F \text{ and }m\ge s \right\}.
\end{equation*}
It follows from Birkhoff ergodic theorem that $\mu_j(A_{j,\epsilon}^s)\to 1$ as $s\to\infty$. Choose $s_0 \in \N$ such that $\mu_j(A_{j,\epsilon}^{s_0})\ge 9/10$, for every $j \in  \{1, \dots,  N  \}$. We assume that $s_0$ is sufficiently large (relative to our constants $C_0$, $C_1$, $N_0$ and $L$) to be determined later.

Observe that $\mu_j( A^{s_0}_{j,\epsilon}\cap K_M\cap \sigma^{-s_0}(K_M))>\frac{1}{2}$. Pick a point $x_j\in A^{s_0}_{j,\epsilon}\cap K_M\cap \sigma^{-s_0}(K_M)$. We will construct a periodic point $x_0$ out of the sequence $(x_j)^N_{j=1}$. Let ${\bf y_j}$ be the admissible word coming from the first $(s_0+1)$ coordinates of $x_j$. Observe that the first and last letters of ${\bf y_j}$ are in $\{1,...,M\}$. We construct an admissible word of the form ${\bf y}={\bf y_1 w_1y_2w_2...y_Nw_N}$, where ${\bf w_i}$ are admissible words of length  less or equal to $L$ that connects ${\bf y_i}$ with ${\bf y_{i+1}}$ (where we consider ${\bf y_{N+1}}={\bf y_1}$). We will moreover assume that the admissible word ${\bf w_i}$ is the same one we used to construct the point $p_{a,b}$, for the corresponding $a$ and $b$. In this case $l({\bf y_i})=c(a,b)$ and the point associated to ${\bf w_i}$ is denoted by $p_i\in (p_{a,b})_{a,b}$. Then define $x=({\bf yy...})$.  We claim that the periodic measure associated to $x$, say $\mu_x$, belongs to the set 
\begin{equation*}
\Omega= \left\{ \nu \in \M(\Sigma,\sigma): \left|\int fd\nu-\int fd\mu \right|<\epsilon, \text{ for every }f\in \F \right\}.
\end{equation*}
Our construction ensures the following inequalities:
\begin{enumerate}
\item $|S_{s_0-N_0}f(x)-S_{s_0-N_0}f(x_1)|\le (s_0-N_0)var_{N_0}f\le (s_0-N_0)\epsilon/4,$ for every $f\in \F$. 
\item $|S_{N_0+l({\bf y_1})}f(\sigma^{s_0-N_0}x)-S_{N_0+l({\bf y_1})}f(\sigma^{s_0-N_0}x)|\le 2(N_0+l({\bf y_1}))C_0\le 2(N_0+L)C_0,$ for every $f\in \F_0$. 
\item $|S_{N_0}\tau(\sigma^{s_0-N_0}x)-S_{N_0}\tau(\sigma^{s_0-N_0}x_1)|\le N_0 var_2(\tau)$.  
\item $|S_{l({\bf y_1})}\tau(\sigma^{s_0}x)-S_{l({\bf y_1})}\tau(p_1)|\le l({\bf y_1})var_2(\tau)\le Lvar_2(\tau).$
\end{enumerate}
 We can use the last inequality to obtain that 
 \begin{equation*}
|S_{l({\bf y_1})}\tau(\sigma^{s_0}x)|\le Lvar_2(\tau)+C_1.
\end{equation*}
Combining these inequalities we obtain that 
\begin{equation*}
|S_{s_0+l({\bf y_1})}\tau(x)-S_{s_0}\tau(x_1)|\le \frac{1}{4}(s_0-N_0)\epsilon+N_0var_2(\tau)+Lvar_2(\tau)+C_1,
\end{equation*}
and that 
\begin{equation*}
|S_{s_0+l({\bf y_1})}f(x)-S_{s_0+l({\bf y_1})}f(x_1)|\le \frac{1}{4}(s_0-N_0)\epsilon+2(N_0+L)C_0,
\end{equation*}
where $f\in \F_0$. Similar inequalities can be obtained when comparing the value of our function $f$ at $\sigma^{(k-1)s_0+\sum_{i=1}^{k-1}l({\bf y_i})}(x)$ and at $x_k$, where $f\in \F$. Using the triangle inequality and the definition of $ A^{s_0}_{j,\epsilon}$ we can estimate $|\int f d\mu_x-\sum_{j=1}^N\int fd\mu_j|,$ in an effective way. By taking $s_0$ large enough  (in terms of our constants $C_0$, $C_1$, $N_0$ and $L$)  we can ensure that $\mu_x\in \Omega.$ We leave the details to the reader. We have now proved that the result holds for convex combinations of ergodic measures, and as explained at the beginning of the proof, the general result follows from Lemma \ref{noref}.
\end{proof}


\begin{corollary}\label{cor:densusp} Assume that $\tau \in \cR$. Then the space of ergodic measures $\M_e(\Sigma,\sigma,\tau)$ is  weak* dense in $\M(\Sigma,\sigma,\tau)$.
\end{corollary}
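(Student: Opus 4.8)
The plan is to transport the statement to the base $\Sigma$ through the Ambrose--Kakutani bijection of Lemma \ref{lem:medi} and then feed in the strengthened approximation provided by Proposition \ref{densusp}. Fix $\nu\in\M(\Sigma,\sigma,\tau)$ and set $\mu:=\psi(\nu)\in\M_\tau$, so that $\int\tau\,d\mu<\infty$. Since $\tau\in\cR$, it is in particular uniformly continuous with $var_2(\tau)$ finite (Definition \ref{def:R}), so Proposition \ref{densusp} applies and yields a sequence of periodic measures $(\mu_n)_n\subset\M(\Sigma,\sigma)$ converging to $\mu$ in the weak* topology and satisfying $\lim_{n\to\infty}\int\tau\,d\mu_n=\int\tau\,d\mu$. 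Each $\mu_n$ is periodic, hence $\int\tau\,d\mu_n<\infty$ and $\mu_n\in\M_\tau$, so I would set $\nu_n:=\phi(\mu_n)\in\M(\Sigma,\sigma,\tau)$; being carried by a single periodic orbit of the flow, each $\nu_n$ is ergodic, i.e. $\nu_n\in\M_e(\Sigma,\sigma,\tau)$.

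It then remains to show $\nu_n\to\nu$ in the weak* topology of $\M(\Sigma,\sigma,\tau)$. By Lemma \ref{lem:w.c} this is equivalent to convergence on cylinders, so I would establish the latter by invoking Lemma \ref{lem:util} with $\lambda=1$. Indeed, $\psi(\nu_n)=\mu_n$ converges weak* to $\mu=\psi(\nu)$, and the roof integrals satisfy $\int\tau\,d\mu\big/\int\tau\,d\mu_n\to 1$. Since $1\in(0,1]$, the direction of Lemma \ref{lem:util} that deduces convergence on cylinders from weak* convergence together with the ratio condition gives that $(\nu_n)_n$ converges on cylinders to $\nu$. Applying Lemma \ref{lem:w.c} once more turns this into weak* convergence, which exhibits $\nu$ as a weak* limit of ergodic measures and proves the density of $\M_e(\Sigma,\sigma,\tau)$ in $\M(\Sigma,\sigma,\tau)$.

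The one point requiring care---and the reason the bare weak* density of periodic measures (Theorem \ref{lem:cs}) does not already suffice---is the control of the roof integrals. If one only knew $\mu_n\to\mu$ weak* without the additional convergence $\int\tau\,d\mu_n\to\int\tau\,d\mu$, the integrals could grow without bound, and by Lemma \ref{lem:charzero} the pushed-forward flow measures $\nu_n$ would then converge on cylinders to a sub-probability measure (possibly the zero measure) rather than to $\nu$. Thus the whole argument hinges on the fact that Proposition \ref{densusp} produces periodic approximants whose $\tau$-integrals converge to that of $\mu$: this is exactly what forces $\lambda=1$ in Lemma \ref{lem:util} and rules out escape of mass in the sense of Lemma \ref{lem:noescape}. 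Everything else is a formal transport between the base dynamics and the flow via $\phi$ and $\psi$, so I expect the integral-control input to be the only substantive ingredient.
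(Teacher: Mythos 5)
Your proof is correct and follows exactly the paper's own argument: transport via $\psi$, apply Proposition \ref{densusp} to get periodic approximants with controlled $\tau$-integrals, push forward by $\phi$, and conclude with Lemma \ref{lem:util} (with $\lambda=1$) and Lemma \ref{lem:w.c}. Your closing remark correctly identifies the integral control as the essential ingredient beyond Theorem \ref{lem:cs}.
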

\begin{proof} Fix $\nu\in \M(\Sigma,\sigma,\tau)$. We will prove that $\nu$ can be approximated in the topology of convergence on cylinders by ergodic measures. Let $\mu=\psi(\nu)$. By Proposition \ref{densusp} we can find a sequence of periodic measures $(\mu_n)_n\subset \M(\Sigma,\sigma)$ converging to $\mu$ in the weak* topology and such that $\lim_{n\to\infty}\int \tau d\mu_n=\int \tau d\mu$. Set $\nu_n=\phi(\mu_n)$. Lemma \ref{lem:util} implies that $(\nu_n)_n$ converges in the cylinder topology to $\nu$. In virtue of Lemma  \ref{lem:w.c} we have that  $(\nu_n)_n$ converges in the weak*  topology to $\nu$. Moreover, each $\nu_n$ is ergodic, since each $\mu_n$ is ergodic. This concludes the proof of the corollary. 
\end{proof}

We have finally all the ingredients to prove the main result of this section: the compactness of $\M_{\le 1}(\Sigma,\sigma,\tau)$. We already know that $\M_{\le1}(\Sigma,\sigma,\tau)$ is a metrizable topological space, it is enough to prove it is sequentially compact.

\begin{theorem} \label{compact} Assume that $\tau\in \cR$. Let $(\nu_n)_n$ be a sequence of invariant probability measures of the suspension flow. Then there exists a subsequence $(\nu_{n_k})_k$ converging on cylinders to an invariant sub-probability measure $\nu$. 
\end{theorem}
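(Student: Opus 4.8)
The plan is to transfer the problem to the base $(\Sigma,\sigma)$ through the bijection $\psi$ and then split into two cases according to whether the roof mass escapes. I would set $\mu_n=\psi(\nu_n)\in\M_\tau$. Since $\tau>c>0$ we have $\int\tau\,d\mu_n\ge c$ for every $n$, so, passing to a subsequence (which I will not relabel), I may assume that $\int\tau\,d\mu_n\to L$ for some $L$ in the compact interval $[c,\infty]$. If $L=\infty$, then Lemma \ref{lem:charzero} tells me at once that $(\nu_n)_n$ converges on cylinders to the zero measure, which is an invariant sub-probability measure, and there is nothing more to prove.

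Assume now $L<\infty$, and fix $M$ with $\int\tau\,d\mu_n\le M$ for all $n$. Regarding each $\mu_n$ as the point $F_{\mu_n}\in L(\Sigma)$ and invoking the compactness of $L(\Sigma)$, I would pass to a further subsequence along which $\mu_n\to F$ on cylinders for some $F\in L(\Sigma)$. The decisive step is to show that $F$ is countably additive, i.e. that $F(C)=\sum_{s\ge1}F(Cs)$ for every cylinder $C$. This is precisely the estimate proved in Proposition \ref{prop:F}: that proof uses periodicity only to derive inequality (\ref{xx}), namely $\mu_n(\bigcup_{s\ge k}Cs)\le\mu_n(\bigcup_{s\ge k}[s])$, but this inequality holds for \emph{any} invariant measure, since $\bigcup_{s\ge k}Cs\subset\sigma^{-m}\bigcup_{s\ge k}[s]$ (with $m$ the length of $C$) and $\mu_n$ is invariant; the remaining bound $\mu_n(\bigcup_{s\ge k}[s])\le (\int\tau\,d\mu_n)/\inf_{x:x_1\ge k}\tau(x)\le M/\inf_{x:x_1\ge k}\tau(x)$ is just Markov's inequality. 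Since $\tau\in\cR$, property \eqref{Tb} of Definition \ref{def:R} forces $\inf_{x:x_1\ge k}\tau(x)\to\infty$, and therefore $\lim_{k\to\infty}\lim_{n\to\infty}\mu_n(\bigcup_{s\ge k}Cs)=0$. Exactly as in Proposition \ref{extension}, Kolmogorov's extension theorem then promotes $F$ to a Borel measure $\mu$ on $\Sigma$.

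With $\mu$ at hand I can finish. By Lemma \ref{lem:noescape} the measure $\mu$ is a probability measure and $\mu_n\to\mu$ in the weak* topology, and by Lemma \ref{invariance} it is $\sigma$-invariant; lower semicontinuity of the nonnegative continuous roof $\tau$ under weak* convergence gives $\int\tau\,d\mu\le L<\infty$, so $\mu\in\M_\tau$ and $\nu:=\phi(\mu)\in\M(\Sigma,\sigma,\tau)$ is well defined. Finally I would set $\lambda:=(\int\tau\,d\mu)/L$; since $c\le\int\tau\,d\mu\le L$ we have $\lambda\in(0,1]$, and because $\mu_n\to\mu$ weak* with $\int\tau\,d\mu/\int\tau\,d\mu_n\to\lambda$, Lemma \ref{lem:util} yields that $(\nu_n)_n$ converges on cylinders to $\lambda\nu\in\M_{\le1}(\Sigma,\sigma,\tau)$, as required.

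The hard part is the middle paragraph: proving that the $L(\Sigma)$-limit $F$ is countably additive rather than merely finitely additive, i.e. ruling out escape of mass inside the base cylinders. Everything hinges on the growth condition \eqref{Tb} defining the class $\cR$, which converts the uniform bound on $\int\tau\,d\mu_n$ into uniform smallness of $\mu_n(\bigcup_{s\ge k}[s])$; without it (compare Remark \ref{nomealim}) the limit could fail to be a measure. The only point requiring care beyond citing Proposition \ref{prop:F} is the observation that its key inequality survives verbatim for arbitrary invariant measures, so that no reduction to periodic measures is actually needed.
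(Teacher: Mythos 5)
Your proof is correct, and its skeleton is the same as the paper's: pass to the base via $\psi$, split on whether $\int\tau\,d\mu_n$ stays bounded, handle the unbounded case with Lemma \ref{lem:charzero}, use compactness of $L(\Sigma)$ in the bounded case, show the limit $F$ is a genuine (countably additive) measure, and transfer back with Lemma \ref{lem:util}. The one genuine difference is how you establish countable additivity of $F$. The paper does not apply Proposition \ref{prop:F} directly to $(\mu_{n_k})_k$; instead it first invokes Proposition \ref{densusp} to replace each $\mu_{n_k}$ by a nearby \emph{periodic} measure $\eta_k$ with $\int\tau\,d\eta_k\le\int\tau\,d\mu_{n_k}+1$, and only then cites Proposition \ref{prop:F}, which is stated for periodic measures. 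You instead observe that the only place periodicity enters the proof of Proposition \ref{prop:F} is inequality (\ref{xx}), and that this inequality holds for every invariant measure because $\bigcup_{s\ge k}Cs\subset\sigma^{-m}\bigcup_{s\ge k}[s]$; the rest is Markov's inequality plus condition \eqref{Tb}. This observation is correct and lets you bypass Proposition \ref{densusp} entirely at this point (the paper still needs that proposition for Corollary \ref{cor:densusp}, so nothing is saved globally, but your route is more self-contained and avoids the approximation bookkeeping). You are also slightly more careful than the paper in one spot: before applying Lemma \ref{lem:util} one needs $\int\tau\,d\mu\le L<\infty$ so that $\phi(\mu)$ is defined and $\lambda\in(0,1]$; your appeal to lower semicontinuity of $\mu\mapsto\int\tau\,d\mu$ under weak* convergence supplies exactly this, whereas the paper leaves it implicit.
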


\begin{proof} Let $\mu_n=\psi(\nu_n)$. If $\limsup_{n\to\infty}\int \tau d\mu_n=\infty$, there exists  a subsequence of $(\nu_n)_n$ that converges on cylinders to the zero measure. We will assume that $$\int \tau d\mu_n\le M,$$ for all $n\in \N$. By compactness of $L(\Sigma)$ there exists a subsequence $(\mu_{n_k})_k$ that converges on cylinders to $F\in L(\Sigma)$. Maybe after passing to a subsequence we can  assume that $\lim_{k\to\infty}\int \tau d\mu_{n_k}=L,$ for some $L\in \R$.  We can now use Proposition \ref{densusp} to obtain periodic measures $\eta_k$ satisfying $d(\mu_{n_k},\eta_k)\le \frac{1}{k}$ and $\int \tau d\eta_k\le (\int \tau d\mu_{n_k}+1)$. Note that $\lim_{k\to\infty}d(\eta_k,F)=0$ and  $\int \tau d\eta_k\le (M+1)$, for all $k\in \N$. We can now use Proposition \ref{prop:F} to conclude that $F$ extends to an invariant probability measure that we denote by $\mu$. It follows that $(\mu_{n_k})_k$ converges on cylinders to $\mu$ and that $\lim_{k\to\infty}\int \tau d\mu_{n_k}=L$. Finally use Lemma \ref{lem:util} to obtain that $(\nu_{n_k})_k$ converges on cylinders to $\lambda \nu$, where $\nu=\phi(\mu)$ and $\lambda=\frac{\int \tau d\mu}{L}$. 
\end{proof}

\subsection{The space of flow invariant sub-probability measures is the Poulsen simplex}
In Section \ref{sec:pou} we proved that $\M_{\le1}(\Sigma,\sigma)$ is affine homeomorphic to the Poulsen simplex if $(\Sigma,\sigma)$ has the $\F-$property. In this section we  prove an analogous result for the suspension flow, that is, that $\M_{\le1}(\Sigma,\sigma,\tau)$ is affine homeomorphic to the Poulsen simplex, provided that $\tau\in \cR$.

\begin{theorem} \label{susp:p1}
Let $\tau$ be a potential in $\cR$. The space $\M_{\le 1}(\Sigma,\sigma,\tau)$ endowed with the topology of convergence on cylinders is affine homeomorphic to the Poulsen simplex. 
\end{theorem}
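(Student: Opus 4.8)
The plan is to verify the hypotheses of the Lindenstrauss--Olsen--Sternfeld uniqueness theorem \cite[Theorem 2.3]{los}, exactly as in the proof of Theorem \ref{thm:po_main}. First I would record that, just as for the shift in Section \ref{sec:pou}, the space $\M_{\le 1}(\Sigma,\sigma,\tau)$ is a metrizable convex compact Choquet simplex with at least two points: metrizability is Lemma \ref{met}, compactness follows from Theorem \ref{compact} together with metrizability (sequential compactness of a metrizable space gives compactness), and by the argument of Proposition \ref{lem_ext} every nonzero element is of the form $\lambda\nu=\lambda\nu+(1-\lambda)\cdot 0$ with $\nu\in\M(\Sigma,\sigma,\tau)$ and $\lambda\in(0,1]$, so the extreme points are precisely the ergodic flow-invariant probability measures together with the zero measure (and the space is the cone, with apex the zero measure, over the Choquet simplex $\M(\Sigma,\sigma,\tau)$). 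It therefore remains only to prove that these extreme points are dense.

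For the density statement I would fix an arbitrary $\lambda\nu$ with $\lambda\in[0,1]$ and $\nu\in\M(\Sigma,\sigma,\tau)$ and produce a sequence of ergodic measures converging to it on cylinders. The case $\lambda=0$ is Lemma \ref{ftz}, which already gives periodic (hence ergodic) measures converging on cylinders to the zero measure, and the case $\lambda=1$ is Corollary \ref{cor:densusp}. The substance is the range $\lambda\in(0,1)$. Writing $\mu=\psi(\nu)$ and $a=\int\tau\,d\mu<\infty$, I would first construct \emph{probability} measures for the flow converging on cylinders to $\lambda\nu$, and only afterwards replace them by ergodic ones.

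To build such probability measures I would exploit the escape-of-mass mechanism quantified by Lemma \ref{lem:util}: by that lemma it suffices to produce shift-invariant probability measures $\mu_n$ with $\mu_n\to\mu$ in the weak* topology and $\int\tau\,d\mu_n\to a/\lambda$, so that $\int\tau\,d\mu/\int\tau\,d\mu_n\to\lambda$. Taking periodic base measures $(\kappa_n)_n$ with $b_n:=\int\tau\,d\kappa_n\to\infty$ (furnished by the construction underlying Lemma \ref{ftz} together with Lemma \ref{lem:charzero}) and noting each $\kappa_n\in\M_\tau$, I would set
\begin{equation*}
\mu_n=(1-t_n)\mu+t_n\kappa_n,\qquad t_n=\frac{a(1-\lambda)}{\lambda\,b_n}.
\end{equation*}
Then $t_n\to 0$ forces $\mu_n\to\mu$ in the weak* topology, since for each test function $f$ the term $t_n\kappa_n$ contributes at most $t_n\|f\|\to 0$, while $\int\tau\,d\mu_n=(1-t_n)a+t_nb_n\to a+a(1-\lambda)/\lambda=a/\lambda$ by the choice of $t_n$; here $\int\tau\,d\mu_n<\infty$, so $\phi(\mu_n)\in\M(\Sigma,\sigma,\tau)$. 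By Lemma \ref{lem:util} the flow probability measures $\phi(\mu_n)$ converge on cylinders to $\lambda\nu$.

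Finally I would upgrade to ergodic measures. Since ergodic measures are weak* dense in $\M(\Sigma,\sigma,\tau)$ (Corollary \ref{cor:densusp}) and the weak* and cylinder topologies agree there (Lemma \ref{lem:w.c}), for each $n$ I can choose an ergodic $\overline{\nu}_n$ with $\rho(\overline{\nu}_n,\phi(\mu_n))\le 1/n$, where $\rho$ is the metric of Lemma \ref{met}; the triangle inequality then gives $\overline{\nu}_n\to\lambda\nu$ on cylinders. This shows the extreme points are dense, and \cite[Theorem 2.3]{los} identifies $\M_{\le 1}(\Sigma,\sigma,\tau)$ with the Poulsen simplex. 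I expect the main obstacle to be the middle step: realizing an arbitrary mass ratio $\lambda\in(0,1)$ in the limit requires simultaneously fixing the weak* limit and inflating the $\tau$-integral by the exact factor $1/\lambda$, and it is precisely here that membership of $\tau$ in the class $\cR$ is used, through Lemmas \ref{lem:charzero}, \ref{lem:util} and \ref{ftz}, to convert controlled escape of mass at the base level into controlled loss of total mass for the flow.
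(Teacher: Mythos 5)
Your proposal is correct and follows essentially the same route as the paper's proof: compactness (Theorem \ref{compact}), metrizability (Lemma \ref{met}), the Choquet simplex structure, density of the extreme points via the escape-to-zero sequence of Lemma \ref{ftz} combined with the weak* density of ergodic measures (Corollary \ref{cor:densusp}), and finally \cite[Theorem 2.3]{los}. The only divergence is in the middle step: the paper forms the convex combination directly at the flow level, setting $\hat{\nu}_n:=\lambda\nu+(1-\lambda)\tilde{\nu}_n$ with $\tilde{\nu}_n\to 0$ on cylinders, which converges to $\lambda\nu$ immediately because evaluation on cylinders is affine, whereas you descend to the base, tune the weights $t_n$ so that $\int\tau\,d\mu_n\to a/\lambda$, and invoke Lemma \ref{lem:util}; both are valid, yours being slightly more roundabout but making the mass-inflation mechanism explicit.
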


\begin{proof}
In Theorem \ref{compact} we proved that the space $\M_{\le 1}(\Sigma,\sigma,\tau)$ is compact with respect to the cylinder topology. In Lemma  \ref{met} we showed that it is a metrizable space. Since the space is also a convex Choquet simplex (from the ergodic decomposition), it suffices to prove that the set of extreme points is dense. Note that every element of $\M_{\le 1}(\Sigma,\sigma,\tau)$ is of the form $\lambda \nu$, with $\lambda \in [0,1]$ and $\nu \in \M(\Sigma,\sigma,\tau)$. In Lemma \ref{ftz} we proved that there exists a sequence of flow invariant ergodic measures $(\tilde{\nu}_n)_n$ converging to the zero measure. Set $\hat{\nu}_n:= \lambda \nu + (1- \lambda)\tilde{\nu}_n$, and observe that the sequence $(\hat{\nu}_n)_n$ converges on cylinders to $\lambda \nu$. It follows from Corollary \ref{cor:densusp} that  the ergodic measures are dense in $\M(\Sigma,\sigma,\tau)$. This allows us to approximate $(\hat{\nu}_n)_n$ by a sequence of ergodic measures that converges in cylinders to $\lambda \nu$. It follows from the main result of \cite[Theorem 2.3]{los} that $\M_{\le1}(\Sigma,\sigma,\tau)$ is affine homeomorphic to the Poulsen simplex. 
 \end{proof}

As in Section \ref{sec:pou} we conclude that 


\begin{theorem} \label{susp:p2}
Let $(\Sigma, \sigma)$ be a transitive countable Markov shift  and $\tau \in \cR$. Then  $\M(\Sigma,\sigma,\tau)$ is affinely homeomorphic to the Poulsen simplex minus a vertex and all of its convex combinations.
\end{theorem}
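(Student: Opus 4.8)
The plan is to mirror precisely the argument that, in Section \ref{sec:pou}, passes from Theorem \ref{thm:po_main} to the corresponding description of $\M(\Sigma,\sigma)$, now substituting its flow analogue Theorem \ref{susp:p1} for Theorem \ref{thm:po_main} and Lemma \ref{lem:w.c} for Lemma \ref{equivtop}. First I would record that, by Theorem \ref{susp:p1}, the space $\M_{\le 1}(\Sigma,\sigma,\tau)$ endowed with the topology of convergence on cylinders is affine homeomorphic to the Poulsen simplex; fix such an affine homeomorphism $\Phi$.

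Next I would pin down the extreme point structure. By Proposition \ref{lem_ext}, applied to the suspension flow $(Y,\Theta)$, the extreme points of $\M_{\le 1}(\Sigma,\sigma,\tau)$ are exactly the ergodic flow-invariant probability measures together with the zero measure. In particular the zero measure is one of the extreme points, so under $\Phi$ it corresponds to a single vertex $v$ of the Poulsen simplex.

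The key structural observation is that every element of $\M_{\le 1}(\Sigma,\sigma,\tau)$ is of the form $\lambda\nu$ with $\lambda\in[0,1]$ and $\nu\in\M(\Sigma,\sigma,\tau)$, and that $\lambda\nu=\lambda\nu+(1-\lambda)\,0$ exhibits it as a convex combination of a flow-invariant probability measure and the zero measure. Hence the sub-probability measures that fail to be probability measures are precisely the zero measure together with all proper convex combinations involving it (those with $\lambda<1$); in other words $\M_{\le 1}(\Sigma,\sigma,\tau)\setminus\M(\Sigma,\sigma,\tau)$ is exactly the vertex $v$ and its convex combinations. Removing these from the simplex therefore leaves precisely $\Phi(\M(\Sigma,\sigma,\tau))$, so that $\M(\Sigma,\sigma,\tau)$, with the cylinder topology, is affine homeomorphic to the Poulsen simplex minus a vertex and all of its convex combinations.

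Finally, to upgrade the conclusion to the weak* topology I would invoke Lemma \ref{lem:w.c}, which asserts that on $\M(\Sigma,\sigma,\tau)$ the topology of convergence on cylinders coincides with the weak* topology; the affine homeomorphism produced above is then automatically an affine homeomorphism for the weak* topology, completing the proof. I do not expect a genuine obstacle here, since all the heavy lifting is done by Theorems \ref{susp:p1} and the compactness result of Theorem \ref{compact} behind it. The only point requiring care is the set-theoretic identification of the complement $\M_{\le 1}(\Sigma,\sigma,\tau)\setminus\M(\Sigma,\sigma,\tau)$ as ``the vertex $v$ plus its convex combinations,'' which is immediate from the $\lambda\nu$ description together with the identification of the zero measure as an extreme point via Proposition \ref{lem_ext}.
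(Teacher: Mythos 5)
Your proposal is correct and follows essentially the same route as the paper: identify the extreme points of $\M_{\le 1}(\Sigma,\sigma,\tau)$ as the ergodic flow-invariant probability measures together with the zero measure, apply Theorem \ref{susp:p1} to realize the whole space as the Poulsen simplex with the zero measure as a distinguished vertex, and use Lemma \ref{lem:w.c} to pass from the cylinder topology to the weak* topology on $\M(\Sigma,\sigma,\tau)$. Your explicit identification of $\M_{\le 1}(\Sigma,\sigma,\tau)\setminus\M(\Sigma,\sigma,\tau)$ with the vertex and its convex combinations is exactly the step the paper leaves implicit.
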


\begin{proof}
Note that the set of extreme points of $\M_{\le 1}(\Sigma,\sigma,\tau)$ is the zero measure together with the set  of ergodic measures in $\M(\Sigma,\sigma,\tau)$.
The result now follows from Theorem \ref{susp:p1} together with the relation between weak* and cylinder topology (see Lemma \ref{lem:w.c}).
\end{proof}

\end{document}